\title
[Filtering of second order generalized stochastic processes]
{Filtering of second order generalized stochastic processes corrupted by additive noise}
\author[P. Wahlberg]{Patrik Wahlberg}
\address{Dipartimento di Scienze Matematiche, Politecnico di Torino, Corso Duca degli Abruzzi 24,
10129 Torino, Italy}
\email{patrik.wahlberg[AT]polito.it}
\numberwithin{equation}{section}          
\newtheorem{thm}{Theorem}
\numberwithin{thm}{section}
\newcommand{\rubrik}{}
\newtheorem{prop}[thm]{Proposition}
\newtheorem{cor}[thm]{Corollary}
\newtheorem{lem}[thm]{Lemma}
\theoremstyle{definition}
\newtheorem{defn}[thm]{Definition}
\newtheorem{example}[thm]{Example}
\theoremstyle{remark}
\newtheorem{rem}[thm]{Remark}              
\newcommand{\scal}[2]{\langle #1,#2\rangle}
\newcommand{\pd}[1] {\partial ^#1}
\newcommand{\bP}{\mathbf P}
\newcommand{\bE}{\mathbf E}
\newcommand{\ro}{\mathbf R}
\newcommand{\no}{\mathbf N}
\newcommand{\rr}[1]{\mathbf R^{#1}}
\newcommand{\sro}[1]{\mathbf S}
\newcommand{\nn}[1]{\mathbf N^{#1}}
\newcommand{\zz}[1]{\mathbf Z^{#1}}
\newcommand{\zo}{\mathbf Z}
\newcommand{\co}{\mathbf C}
\newcommand{\dd}{\mathrm {d}}
\newcommand{\ep}{\varepsilon}
\newcommand{\fy}{\varphi}
\newcommand{\cdo}{\, \cdot \, }
\newcommand{\supp}{\operatorname{supp}}
\newcommand{\esssupp}{\operatorname{ess\, supp}}
\newcommand{\eabs}[1]{\langle #1\rangle}
\newcommand{\ran}{\operatorname{ran}}
\newcommand{\rB}{\operatorname{B}}
\newcommand{\mascP}{\mathscr P}
\newcommand{\cS}{\mathscr{S}}
\newcommand{\cH}{\mathscr{H}}
\newcommand{\cM}{\mathscr{M}}
\newcommand{\cT}{\mathscr{T}}
\newcommand{\cB}{\mathscr{B}}
\newcommand{\cD}{\mathscr{D}}
\newcommand{\cF}{\mathscr{F}}
\newcommand{\cK}{\mathscr{K}}
\newcommand{\cL}{\mathscr{L}}
\newcommand{\J}{\mathcal{J}}
\newcommand{\wt}{\widetilde}
\newcommand{\wh}{\widehat}
\newcommand{\re}{{\rm Re}\,}
\newcommand{\id}{{\rm id}\,}
\def\la{\langle}
\def\ra{\rangle}
\newcommand{\leqs}{\leqslant}
\newcommand{\geqs}{\geqslant}
\newcommand{\gsp}{\operatorname{GSP}}
\subjclass[2010]{Primary: 60G20, 60G35, 47A62, 94A12, 47G30, 47B65, 42B35.
\quad Secondary: 47A60}
\keywords{Second order tempered generalized stochastic processes, optimal linear filter, additive uncorrelated noise, modulation spaces}
\begin{document}

\maketitle

\begin{abstract}
We treat the optimal linear filtering problem for a sum of two second order uncorrelated 
generalized stochastic processes. 
This is an operator equation involving covariance operators. 
We study both the wide-sense stationary case and the non-stationary case. 
In the former case the equation 
simplifies into a convolution equation. 
The solution is the Radon--Nikodym derivative between  
non-negative tempered Radon measures, for signal and signal plus noise respectively,
in the frequency domain. 
In the non-stationary case we work with pseudodifferential operators with symbols in Sj\"ostrand modulation spaces
which admits the use of its spectral invariance properties. 
\end{abstract}

\par

\section{Introduction}\label{sec:intro}

The paper concerns zero mean second order generalized stochastic processes. 
These are defined as linear continuous maps from a space of test functions defined on $\rr d$ into a Hilbert space of finite variance random variables. 
Given a sum of two such uncorrelated generalized stochastic processes
we study the optimal filtering problem, whose goal is to find the
linear operator that recovers one of them with the minimum mean square error. 

If $u$ denotes the useful signal and $w$ the uncorrelated noise, the equation for the optimal linear operator $F$ reads
\begin{equation}\label{eq:filtereq}
\cK_u = F ( \cK_u + \cK_w)
\end{equation}
where $\cK_u$ and $\cK_w$ are the covariance operators of $u$ and $w$ respectively. 
These are non-negative continuous linear operators from $C_c^\infty(\rr d)$ to the distributions $\cD'(\rr d)$. 

If both generalized stochastic processes are wide-sense stationary then the covariance operators are translation invariant
which means that they are convolution operators. 
Their covariance kernels are then Fourier transforms of non-negative tempered Radon measures $\mu_u$ and $\mu_w$ respectively defined on $\rr d$. 
It is then natural to impose the operator $F$ in \eqref{eq:filtereq} to be a convolution operator, that is $F = f *$. 

Our first result is the determination of the optimal convolution operator in the wide-sense stationary case. 
In the frequency domain it turns out to be the Radon--Nikodym derivative 
\begin{equation*}
\wh f = \frac{\dd \mu_u}{\dd (\mu_u + \mu_w)}
\end{equation*}
which is a function in $L^\infty( \mu_u + \mu_w )$ that satisfies $0 \leqs \wh f \leqs 1$ almost everywhere. 
This result generalizes and formalizes widely established engineering intuition for the optimal filter for wide-sense stationary processes. 
The functional framework for equation  \eqref{eq:filtereq} in this case are Hilbert spaces $\cF L^2(\mu)$, that is tempered distributions with Fourier transforms
that belong to $L_{\rm{loc}}^2(\mu)$ and are square integrable with respect to a non-negative tempered Radon measure $\mu$. 
In fact for wise-sense stationary generalized stochastic processes the covariance operators act continuously on such spaces for certain $\mu$. 

Secondly we study equation \eqref{eq:filtereq} under the assumption that the generalized stochastic processes are non-stationary
which is a less well defined problem. We need frameworks and restrictions to be able to formulate solutions. 

As an intermediate step from wide-sense stationarity to non-stationarity we first impose the restriction that the covariance operators $\cK_u$ and $\cK_w$ commute, which holds in the former case, and are continuous and non-negative on a Hilbert space of distributions on $\rr d$.  
Then we may solve \eqref{eq:filtereq} using the spectral theorem and its associated functional calculus as
\begin{equation*}
F = \int_{\co} f(z) \dd \Pi (z) \in \cL( \cH )
\end{equation*}
where $\Pi$ is a projection-valued measure compactly supported in the first quadrant in $\co$, and where
\begin{equation*}
f(z) = \frac{f_u(z)}{f_u(z) + f_w(z)} \, \chi_{\supp f_u}
\end{equation*}
with 
\begin{equation*}
\cK_u = \int_{\co} f_u(z) \dd \Pi (z), \quad
\cK_w = \int_{\co} f_w(z) \dd \Pi (z). 
\end{equation*}

Finally we relax the commutativity of $\cK_u$ and $\cK_w$ and study equation \eqref{eq:filtereq}
as an equation for pseudodifferential covariance operators of non-stationary generalized stochastic processes. 
We work in the functional framework of modulation spaces for the Weyl symbols of the operators
and for the spaces on which operators act. 
Our goal is to extend the analysis for the wide-sense stationary case. 
We show embeddings of modulation spaces in the spaces $\cF L^2(\mu)$. 
We also show that the Weyl symbols $1 \otimes \mu$, where 
$\mu$ is a non-negative tempered Radon measure, of the covariance operators in the wide-sense stationary case 
do belong to modulation spaces $M_\omega^{\infty,1}(\rr {2d})$ for certain weights $\omega$ defined on $\rr {4d}$. 
This implies that the corresponding operators map between certain modulation spaces, which gives a functional framework for the equation \eqref{eq:filtereq}. 
The weight $\omega$ is however in general decreasing which is too weak for the exploitation of the powerful methods 
based on Gr\"ochenig's and Sj\"ostrand's results on the Wiener property of the symbol class $M_{1 \otimes \omega}^{\infty,1}(\rr {2d})$
where $\omega(X) = (1+|X|)^r$ for $X \in \rr {2d}$ and $r \geqs 0$. 

Imposing covariance operators to have Weyl symbols in $M_{1 \otimes \omega}^{\infty,1}(\rr {2d})$ admits 
white noise but excludes certain other wide-sense stationary generalized stochastic processes, for example derivatives of white noise. 
In this framework we show the following result
which is a rather immediate consequence of results in \cite{Wahlberg1}. 
If $\cK_u$ and $\cK_w$ have Weyl symbols in $M_{1 \otimes \omega}^{\infty,1}(\rr {2d})$ for some $r \geqs 0$
and $\cK_u + \cK_w$ is invertible as an operator on $L^2(\rr d)$, then the optimal filter $F$ solving \eqref{eq:filtereq}
is again a Weyl pseudodifferential operator with symbol in $M_{1 \otimes \omega}^{\infty,1}(\rr {2d})$. 
If $ r > 2 d$ then the Gabor coefficients of the Weyl symbols of $F$ and $\cK_u$ are related by a multiplication of 
an infinite matrix with polynomial off-diagonal decay of order smaller than $r/2 - d$. 

Finally we discuss a few operator theoretic observations concerning equation \eqref{eq:filtereq} considered as an 
equation for bounded linear operators on a Hilbert space on which $\cK_u$ and $\cK_w$ are non-negative, 
and $\cK_u + \cK_w$ is allowed to be non-invertible. 
Douglas' lemma then says that there is a bounded linear operator $F$ that solves \eqref{eq:filtereq}, with certain uniqueness properties, 
provided $\ran \cK_u \subseteq \ran( \cK_u + \cK_w )$. 

The analysis in this work is second order, which explains the assumption that the processes are uncorrelated rather than statistically independent. 

The optimal filtering problem for (generalized) stochastic processes has a long and eclectic history going back to Kolmogorov \cite{Kolmogorov1} and Wiener \cite{Wiener1}, cf. \cite{Fomin1,Fomin2,Hlawatsch1,Papoulis1,Vantrees1,Wahlberg1,Weinstein1}. 
It has been studied mostly under the assumption of wide-sense stationarity. 
The time domain has been either $\zo$ or $\ro$, and the filter has often been subject to the constraint to be \emph{causal}. 
This means that the convolutor $f$ has support on $\ro_+$ and it is natural in engineering applications. 
Usually the term Wiener filter refers to this constraint.  
In this paper we do not restrict the supports of filter kernels. 
Therefore we do not treat causal filters, neither in the wide-sense stationary nor in the non-stationary case.

The paper is organized as follows. 
Section \ref{sec:prelim} is quite long and contains notations, and background material
on Radon measures, Weyl pseudodifferential operators, modulation spaces, and Gabor frames. 
It also specifies the framework of generalized stochastic processes. 
In Section \ref{sec:optimalfilter} we deduce the equation \eqref{eq:filtereq}
for the optimal filter operator,
and Section \ref{sec:optimalwss} treats its solution for 
wide-sense stationary generalized stochastic processes. 

In Section \ref{sec:optfiltnonstat} we study equation \eqref{eq:filtereq} for non-stationary 
generalized stochastic processes. 
First we keep the feature of commutating covariance operators which holds in the wide-sense stationary case. 
Then a solution to \eqref{eq:filtereq} may be determined using the spectral theorem. 
Finally we relax the commutativity and study the equation \eqref{eq:filtereq} as an equation for 
pseudodifferential operators with symbols in modulation spaces.

\section{Preliminaries}\label{sec:prelim}

\subsection{Notations}\label{subsec:notations}

The symbol $\rB_r$ denotes the ball in $\rr d$ with center at the origin and radius $r > 0$.
The notation $K \Subset \rr d$ means that $K$ is compact. 
We use $\ro_+$ for the non-negative real numbers, 
and $\chi_A$ is the indicator function of a subset $A \subseteq \rr d$. 
We write $f (x) \lesssim g (x)$ provided there exists $C>0$ such that $f (x) \leqs C \, g(x)$ for all $x$ in the domain of $f$ and of $g$. 
If $f (x) \lesssim g (x) \lesssim f(x)$ then we write $f \asymp g$. 
The partial derivative $D_j = - i \partial_j$, $1 \leqs j \leqs d$, acts on functions and distributions on $\rr d$, 
with extension to multi-indices as $D^\alpha = i^{-|\alpha|} \partial^\alpha$ for $\alpha \in \nn d$. 
We use the bracket $\eabs{x} = (1 + |x|^2)^{\frac12}$ for $x \in \rr d$. 
The space of bounded linear operators on a Banach space $X$ is denoted $\cL(X)$, 
and inclusions $X \subseteq Y$ of Banach spaces understand embeddings, that is continuity. 
Peetre's inequality with optimal constant (cf. \cite[Lemma~2.1]{Rodino1}) is 
\begin{equation}\label{eq:Peetre}
\eabs{x+y}^s \leqs \left( \frac{2}{\sqrt{3}} \right)^{|s|} \eabs{x}^s\eabs{y}^{|s|}\qquad x,y \in \rr d, \quad s \in \ro. 
\end{equation}

If $1 \leqs p \leqs \infty$ then the conjugate exponent $p' \in [1, \infty]$ satisfies $\frac1p + \frac{1}{p'} = 1$. 
The normalization of the Fourier transform is
\begin{equation*}
 \cF f (\xi )= \widehat f(\xi ) = (2\pi )^{-\frac d2} \int _{\rr
{d}} f(x)e^{-i\scal  x\xi }\, \dd x, \qquad \xi \in \rr d, 
\end{equation*}
for $f\in \cS(\rr d)$ (the Schwartz space), where $\scal \cdo \cdo$ denotes the scalar product on $\rr d$. 
We have for $f, g \in \cS(\rr d)$
\begin{equation}\label{eq:convolutionFourier}
\wh{f * g} = (2\pi )^{\frac d2} \wh f \, \wh g
\end{equation}
and this identity extends to $f \in \cS'(\rr d)$ (the tempered distributions) and $g \in \cS(\rr d)$,
with the Fourier transform defined on $\cS'(\rr d)$ 
as $( \wh f, \wh g) = (f,g)$ for $f \in \cS'(\rr d)$ and $g \in \cS(\rr d)$.

The conjugate linear (antilinear) action of a distribution $u$ on a test function $\phi$ is written $(u,\phi)$, consistent with the $L^2$ inner product $(\cdo ,\cdo ) = (\cdo ,\cdo )_{L^2}$ which is conjugate linear in the second argument. 
Translation of a function or a distribution $f$ is denoted $T_x f(y) = f(y-x)$ for $x,y \in \rr d$, 
and modulation as $M_\xi f(x) = e^{i \la x, \xi \ra} f(x)$ for $x,\xi \in \rr d$. 

\subsection{Radon measures}\label{subsec:measures}

We use non-negative Radon measures on $\rr d$ \cite{Folland1}. 
By the Riesz representation theorem \cite[Theorem~7.2]{Folland1}, \cite[Theorem~2.14]{Rudin2} we may regard such a measure either
as a regular non-negative Borel measure, that is a regular $\sigma$-additive function whose domain is the Borel $\sigma$-algebra on $\rr d$, 
denoted $\cB(\rr d)$,
or equivalently we may regard it as a non-negative linear functional on $C_c(\rr d)$ which denotes the space of compactly supported continuous functions, with certain regularity properties. 
The latter description implies that the measure $\mu$ satisfies an estimate of the form 
\begin{equation*}
|(\mu, \fy)| \leqs C_K \sup_{x \in \rr d} |\fy (x)|, \quad \fy \in C_c(K),
\end{equation*}
with $C_K > 0$ for each $K \Subset \rr d$, and the non-negativity means 
$(\mu, \fy) \geqs 0$ when $\fy \geqs 0$. 
We adopt the convention that $\mu$ is an antilinear functional, and then using the former description of a non-negative Radon measure $\mu$
we may write
\begin{equation*}
(\mu, \fy) = \int_{\rr d} \overline{\fy (x)} \, \dd \mu (x), \quad \fy \in C_c(K).
\end{equation*}

If the measure $\mu$ is finite then it extends uniquely to an antilinear functional on 
the space $C_0(\rr d)$ which denotes the space of continuous functions that vanish at infinity \cite[Theorem~7.17]{Folland1}. 
The space $C_0(\rr d)$ is the completion of $C_c(\rr d)$ with respect to the uniform topology of $L^\infty(\rr d)$. 

If 
\begin{equation}\label{eq:temperedmeasure}
\int_{\rr d} \eabs{x}^{-s}  \, \dd \mu (x) < \infty
\end{equation}
for some $s \geqs 0$ then the measure $\mu$ is said to be tempered. 
The space of non-negative tempered Radon measures is denoted $\cM_+ (\rr d) \subseteq \cS'(\rr d)$.
An example is Lebesgue measure which is tempered for any $s > d$. 
A measure $\mu \in \cM_+ (\rr d)$ extends uniquely to a continuous antilinear functional on 
$C_{s,0}(\rr d)$ which denotes the space of continuous functions $f$ that vanishes at infinity quicker than $\eabs{\cdot}^{-s}$:  
\begin{equation*}
\lim_{|x| \to \infty} | f(x)| \eabs{x}^s = 0
\end{equation*}
equipped with the weighted supremum norm $\| f \eabs{\cdot}^s \|_{L^\infty(\rr d)}$. 

If $\mu \in \cM_+ (\rr d)$ then we define the Hilbert space
$\cF L^2(\mu)$ as the subspace of $f \in \cS'(\rr d)$ such that $\wh f \in L_{{\rm loc}}^2(\mu)$
and 
\begin{equation*}
\| f \|_{\cF L^2(\mu)} =\left(  \int_{\rr d} | \wh f (\xi) |^2 \, \dd \mu (\xi) \right)^{\frac12} < \infty.
\end{equation*}
It follows from \cite[Proposition~7.1]{Folland1} that $C_c^\infty(\rr d) \subseteq L^2(\mu)$ is 
a dense subspace. 
From \eqref{eq:temperedmeasure} 
we get the estimate for $f \in \cS(\rr d)$
\begin{equation*}
\| f \|_{L^2(\mu)} = \left( \int_{\rr d} | f (\xi) |^2 \, \dd \mu (\xi) \right)^{\frac12}
\lesssim \sup_{\xi \in \rr d} \eabs{\xi}^{\frac{s}{2}} | f(\xi)|
\end{equation*}
for some $s \geqs 0$, which implies that $\cS(\rr d) \subseteq L^2(\mu)$ is a continuous inclusion. 
It follows that $\cS(\rr d) \subseteq \cF L^2(\mu)$ is a dense and continuous inclusion. 
Note however that the inclusion $\cS(\rr d) \subseteq \cF L^2(\mu)$ is not guaranteed to be injective, since 
$\wh f = 0$ in $\cF L^2(\mu)$ means that $\wh f = 0$ $\mu$-a.e. but $\wh f \in \cS(\rr d) \setminus \{ 0 \}$ may
hold, for instance if $\supp \mu \subseteq \rr d$ is compact.  

The inclusion $\cF L^2(\mu) \subseteq \cS'(\rr d)$ is likewise continuous and dense.  
In fact the density follows from the density of $\cS(\rr d) \subseteq \cS'(\rr d)$ \cite[Corollary~V.3.1]{Reed1}. 
The inclusion $\cF L^2(\mu) \subseteq \cS'(\rr d)$ is also injective, as opposed to the possible non-injectivity of $\cS(\rr d) \subseteq \cF L^2(\mu)$. 
Nevertheless if $\supp \mu \subseteq \rr d$ has non-empty interior
then the possibly non-injective inclusion $\cS(\rr d) \subseteq \cF L^2(\mu)$
can be modified such that it becomes injective, if the test function space is modified as 
\begin{equation*}
\cF C_c^\infty(\supp \mu) \subseteq \cF L^2(\mu).
\end{equation*}

We have $H_{\frac{s}{2}}^\infty (\mu) \subseteq \cF L^2(\mu)$
provided  \eqref{eq:temperedmeasure} holds true,
where 
\begin{equation*}
H_{t}^\infty (\mu) = \{ \fy \in \cS'(\rr d): \quad \wh \fy \eabs{\cdot}^t \in L^\infty(\mu) \}, 
\quad t \in \ro,
\end{equation*}
is a scale of Sobolev spaces with respect to $\mu \in \cM_+ (\rr d)$. 
If $\mu$ is Lebesgue measure we write $H_t^\infty(\mu) = H_t^\infty (\rr d)$. 
When $t = 0$ we have $H_0^\infty(\rr d) = \cF L^\infty (\rr d)$ which is the space of pseudo-measures.
It can be identified with the topological dual of the Fourier algebra $\cF L^1(\rr d)$ \cite{Katznelson1,Larsen1}. 

The same conclusion holds for $\cF L^\infty (\mu) = H_0^\infty (\mu)$ if $\mu \in \cM_+ (\rr d)$. 
In fact by \cite[Theorem~6.15]{Folland1} the dual $(L^1 (\mu))'$ can be identified isometrically 
with $L^\infty (\mu)$ via the duality
\begin{equation*}
L^1 (\mu) \times L^\infty (\mu) \ni (f, g) \mapsto \int_{\rr d} f(x) \overline{g(x)} \, \dd \mu (x).  
\end{equation*}
We can identify $L^\infty (\mu) \subseteq \cS'(\rr d)$ as a subspace as
\begin{equation*}
(f, \fy) = \int_{\rr d} f(x) \overline{ \fy(x) } \, \dd \mu (x), \quad f \in L^\infty (\mu), \quad \fy \in \cS(\rr d). 
\end{equation*}
The inclusion $L^\infty (\mu) \subseteq \cS'(\rr d)$ is continuous and injective. 
The Fourier transform of $f \in L^\infty (\mu)$, considered as a tempered distribution $f \in \cS'(\rr d)$, 
equals 
\begin{equation*}
(\wh f, \wh \fy) = \int_{\rr d} f(x) \overline{ \fy(x) } \, \dd \mu (x), \quad \fy \in \cS(\rr d). 
\end{equation*}
It follows that $\| f \|_{\cF L^\infty (\mu)} = \| \wh f \|_{L^\infty (\mu)}$
and $\cF L^\infty (\mu) = \left( \cF L^1 (\mu) \right)'$, cf. \cite[Theorem~4.2.2]{Larsen1}.
The space $H_{0}^\infty (\mu) = \cF L^\infty(\mu)$ for $\mu \in \cM_+ (\rr d)$ will play an essential role in this paper.

\subsection{Weyl pseudodifferential operators}\label{subsec:weyloperators}

If $a \in \cS(\rr {2d})$ is a Weyl symbol then the Weyl pseudodifferential operator \cite{Folland1,Hormander1,Shubin1} is defined as 
\begin{equation}\label{eq:weylquantization}
a^w(x,D) f(x)
= (2\pi)^{-d}  \int_{\rr {2d}} e^{i \langle x-y, \xi \rangle} a \left(\frac{x+y}{2},\xi \right) \, f(y) \, \dd y \, \dd \xi, \quad f \in \cS(\rr d), 
\end{equation}
and $a^w(x,D): \cS(\rr d) \to \cS(\rr d)$ is continuous. 
By the invariance of $\cS'(\rr {2d})$ under linear invertible coordinate transformations and partial Fourier transforms, 
the Weyl correspondence extends to $a \in \cS'(\rr {2d})$ in which case $a^w(x,D): \cS(\rr d) \to \cS' (\rr d)$ is continuous. 

If $a \in \cS'(\rr {2d})$ then 
\begin{equation}\label{eq:wignerweyl}
( a^w(x,D) f, g) = (2 \pi)^{-\frac{d}{2}} ( a, W(g,f) ), \quad f, g \in \cS(\rr d), 
\end{equation}
where the cross-Wigner distribution \cite{Folland1,Grochenig1} is defined as 
\begin{equation}\label{eq:WignerSchwartz}
\begin{aligned}
W(g,f) (x,\xi) 
& = (2 \pi)^{-\frac{d}{2}} \int_{\rr d} g (x+y/2) \overline{f(x-y/2)} e^{- i \la y, \xi \ra} \dd y \\
& = \cF_2 \left( ( g \otimes \overline f) \circ T \right)(x,\xi), \quad (x,\xi) \in T^* \rr d.
\end{aligned}
\end{equation}
Here $\cF_2$ denotes the partial Fourier transform with respect to the second $\rr d$ variable in $\rr {2d}$,
and $T$ is the matrix 
\begin{equation}\label{eq:T}
T = 
\left(
\begin{array}{ll}
I_d & \frac12 I_d \\
I_d & - \frac12 I_d
\end{array}
\right) \in \rr {2d \times 2d}. 
\end{equation}

Conversely, by the Schwartz kernel theorem, for any continuous linear operator $\cK: \cS(\rr d) \to \cS' (\rr d)$ there exists 
a kernel $k \in  \cS' (\rr {2d})$
and a Weyl symbol $a \in  \cS' (\rr {2d})$ such that 
\begin{equation*}
( \cK f,g) = (k, g \otimes \overline f) 
= (2 \pi)^{-\frac{d}{2}} ( a, W(g,f) ), \quad f,g \in \cS(\rr d). 
\end{equation*}
From this we may extract the relation between the Schwartz kernel and the Weyl symbol of a linear continuous operator 
$\cK: \cS(\rr d) \to \cS'(\rr d)$: 
\begin{equation}\label{eq:KernelWeylsymbol}
k = (2 \pi)^{-\frac{d}{2}} ( \cF_2^{-1} a ) \circ T^{-1}
\quad \Longleftrightarrow \quad 
a = (2 \pi)^{\frac{d}{2}} \cF_2 \left( k  \circ T \right).
\end{equation}
%

\subsection{The short-time Fourier transform, modulation spaces and Gabor frames}
\label{subsec:modspace}

Let $\fy \in \cS(\rr d) \setminus\{ 0 \}$. 
The short-time Fourier transform (STFT) of a tempered distribution $u \in \cS'(\rr d)$ is defined by 
\begin{equation}\label{eq:STFT}
V_\fy u (x,\xi) = (2\pi )^{-\frac d2} (u, M_\xi T_x \fy) = \cF (u T_x \overline \fy)(\xi), \quad x,\xi \in \rr d. 
\end{equation}
The function $V_\fy u$ is smooth and polynomially bounded \cite[Theorem~11.2.3]{Grochenig1} as
\begin{equation}\label{eq:STFTtempered}
|V_\fy u (x,\xi)| \lesssim \eabs{(x,\xi)}^{k}, \quad (x,\xi) \in T^* \rr d, 
\end{equation}
for some $k \geqs 0$. 
We have $u \in \cS(\rr d)$ if and only if
\begin{equation*}
|V_\fy u (x,\xi)| \lesssim \eabs{(x,\xi)}^{-k}, \quad (x,\xi) \in T^* \rr d, \quad \forall k \geqs 0.  
\end{equation*}

The inverse transform is given by
\begin{equation}\label{eq:STFTinverse}
u = (2\pi )^{-\frac d2} \iint_{\rr {2d}} V_\fy u (x,\xi) M_\xi T_x \fy \, \dd x \, \dd \xi
\end{equation}
provided $\| \fy \|_{L^2} = 1$, with action under the integral understood, that is 
\begin{equation}\label{eq:moyal}
(u, f) = (V_\fy u, V_\fy f)_{L^2(\rr {2d})}
\end{equation}
for $u \in \cS'(\rr d)$ and $f \in \cS(\rr d)$, cf. \cite[Theorem~11.2.5]{Grochenig1}. 

A \emph{weight} on $\rr d$ is a positive function $\omega \in  L^\infty _{{\rm loc}}(\rr d)$
such that $1/\omega \in  L^\infty _{{\rm loc}}(\rr d)$.
The weight $\omega$ is called $v$-moderate if there is a positive locally bounded function
$v$ such that
\begin{equation}\label{eq:weightmoderate}
\omega(x+y) \leqs C \omega(x)v(y),\quad x,y \in\rr{d},
\end{equation}
for some constant $C \geqs 1$. 
The set $\mascP (\rr d)$ consists of weights that are
$v$-moderate for a polynomially bounded weight, that is a weight of
the form $v(x) = \eabs{x}^s$ with $s \geqs 0$.

Modulation spaces were introduced by Feichtinger 1983 and have been studied thoroughly from many points of view.
Gröchenig's book \cite{Grochenig1} is an excellent source for their basic properties. 

\begin{defn}\label{def:modsp}
Let $\omega \in \mascP (\rr {2d})$, let $p,q \in [1, \infty]$ and let $\fy \in \cS(\rr d) \setminus \{ 0 \}$.  
The modulation space $M_\omega^{p,q} (\rr d)$ is the Banach subspace of $\cS'(\rr d)$
defined by the norm
\begin{equation}\label{eq:mospnorm}
\| u \|_{M_\omega^{p,q}} = \left( \int_{\rr d} \left( \int_{\rr d} |V_\fy u (x,\xi)|^p \, \omega (x,\xi)^p \, \dd x \right)^{\frac{q}{p}} \, \dd \xi \right)^{\frac1{q}}
= \| (V_\fy u) \omega \|_{L^{p,q}}
\end{equation}
when $p, q < \infty$ and the usual modifications otherwise. 
Here $L^{p,q}(\rr {2d})$ is a mix-normed Lebesgue space \cite{Grochenig1}.
\end{defn}

The modulation spaces are independent of $\fy \in \cS(\rr d) \setminus \{ 0 \}$ and increase with the indices as 
\begin{equation}\label{eq:modspnested}
M_\omega^{1,1} \subseteq M_\omega^{p,q} \subseteq M_\omega^{r,s} \subseteq M_\omega^{\infty,\infty}, \quad 
1 \leqs p \leqs r, \quad  
1 \leqs q \leqs s.
\end{equation}
We write $M_\omega^{p,p} = M_\omega^{p}$ and $M_\omega^{p,q} = M^{p,q}$ if $\omega = 1$. 
If $\omega(x,\xi) = \eabs{x}^t \eabs{\xi}^s$ for $x,\xi \in \rr d$ and $t,s \in \ro$ then we write 
$M_\omega^{p,q}(\rr d) = M_{t,s}^{p,q}(\rr d)$.
Note that $\omega \in \mascP (\rr {2d})$, where the polynomial moderateness is a consequence of 
\eqref{eq:Peetre}. 
The spaces $M_\omega^2(\rr d)$ are Hilbert spaces and $M^2(\rr d) = L^2(\rr d)$.

The $L^2$-inner product $(\cdot, \cdot)$ on $\cS(\rr d) \times \cS(\rr d)$ extends uniquely to a continuous 
sesquilinear form on $M_\omega^{p,q} (\rr d) \times M_{1/\omega}^{p',q'} (\rr d)$, and if $p,q < \infty$ then the dual space of 
$M_\omega^{p,q} (\rr d)$ may be identified with $M_{1/\omega}^{p',q'} (\rr d)$ via the form \cite[Theorem~11.3.6]{Grochenig1}.

We will also use modulation spaces with domain $\rr {2d}$ and weight functions defined on $\rr {4d}$ of the form
\begin{equation*}
\omega(x_1, x_2, \xi_1, \xi_2) = \eabs{x_1}^{\tau_1} \eabs{x_2}^{\tau_2} \eabs{\xi_1}^{\zeta_1} \eabs{\xi_2}^{\zeta_2}, \quad x_1, x_2, \xi_1, \xi_2 \in \rr d, 
\end{equation*}
with $\tau_1, \tau_2, \zeta_1, \zeta_2 \in \ro$. These spaces are denoted $M_{\tau_1,\tau_2,\zeta_1,\zeta_2}^{p,q}(\rr {2d})$ 
and will be used as symbols for Weyl pseudodifferential operators acting between modulation spaces $M_{t_1,s_1}^{p,q} (\rr d) \to M_{t_2,s_2}^{p,q} (\rr d)$. 
In fact according to \cite[Proposition~2.9]{Grochenig4} (cf. \cite[Theorem~4.2]{Toft1})
\begin{equation*}
a^w(x,D): M_{\omega_1}^{p,q} (\rr d) \to M_{\omega_2}^{p,q}(\rr d)
\end{equation*}
is continuous for all $p,q \in [1,\infty]$ if $a \in M_{\omega}^{\infty,1}(\rr {2d})$, 
$\omega_1, \omega_2 \in \mascP(\rr {2d})$, $\omega \in \mascP(\rr {4d})$,
and 
\begin{equation*}
\frac{\omega_2(X-Y)}{\omega_1(X+Y)}
\lesssim \omega(X, - 2 \J Y), \quad X, Y \in \rr {2d}, 
\end{equation*}
where
\begin{equation*}
\J =
\left(
\begin{array}{cc}
0 & I_d \\
-I_d & 0
\end{array}
\right) \in \rr {2d \times 2d}
\end{equation*}
is the matrix which plays a fundamental role in symplectic linear algebra \cite{Folland1,Grochenig1}.  

It follows that  
\begin{equation}\label{eq:contpsdomodsp}
a^w(x,D): M_{t_1,s_1}^{p,q} (\rr d) \to M_{t_2,s_2}^{p,q}(\rr d)
\end{equation}
is continuous for all $p,q \in [1,\infty]$
if $a \in M_{\tau_1,\tau_2,\zeta_1,\zeta_2}^{\infty,1}(\rr {2d})$ and
\begin{equation}\label{eq:weightsinequality1}
\eabs{ x - y }^{t_2} \eabs{ \xi - \eta }^{s_2} \eabs{ x + y }^{-t_1} \eabs{ \xi + \eta }^{-s_1}
\lesssim
\eabs{ x }^{\tau_1} \eabs{ \xi }^{\tau_2} \eabs{ \eta }^{\zeta_1} \eabs{ y }^{\zeta_2}, \quad 
x, \xi, y, \eta \in \rr d. 
\end{equation}
A particular case is $a \in M_{1 \otimes \omega}^{\infty,1}(\rr {2d})$
with $\omega(X) = \eabs{X}^r$ for $X \in \rr {2d}$ with $r \geqs 0$. 
Indeed $M_{1 \otimes \omega}^{\infty,1}(\rr {2d}) \subseteq M_{0,0,\frac{r}{2}, \frac{r}{2}}^{\infty,1}(\rr {2d})$
and hence we obtain from 
\eqref{eq:Peetre} and \eqref{eq:weightsinequality1} 
that 
\begin{equation}\label{eq:contpsdomodsp2}
a^w(x,D): M_{t,s}^{p,q} (\rr d) \to M_{t,s}^{p,q}(\rr d)
\end{equation}
is continuous for all $p,q \in [1,\infty]$, and all $t,s \in \ro$ such that 
$\max(|t|, |s|) \leqs \frac{r}{2}$. 

Consequentially $a^w(x,D)$ is continuous on $L^2(\rr d) = M^2(\rr d)$. 
Gr\"ochenig's spectral invariance theorem 
\cite[Theorem~4.6]{Grochenig3} says that if $a \in M_{1 \otimes \omega}^{\infty,1}(\rr {2d})$
and $a^w(x,D)$ is invertible on $L^2(\rr d)$, then $a^w(x,D)^{-1} = b^w(x,D)$ with 
$b \in M_{1 \otimes \omega}^{\infty,1}(\rr {2d})$. 
This so called Wiener property of $M_{1 \otimes \omega}^{\infty,1}(\rr {2d})$ is a refinement of Sj\"ostrand's original result \cite{Sjostrand2} which concerned the case 
when $r = 0$. 

Thus \eqref{eq:weightsinequality1} gives good mapping properties for operators with Weyl symbols in 
the space $M_{\tau_1,\tau_2,\zeta_1,\zeta_2}^{\infty,1}(\rr {2d})$
when $\tau_1, \tau_2, \zeta_1, \zeta_2 \geqs 0$, and furthermore the Wiener property holds.
But we will need also symbols $M_{0,\tau_2,\zeta_1,\zeta_2}^{\infty,1}(\rr {2d})$
with $\tau_2$ and $\zeta_2$ negative. 
If $\tau_1 = 0$ and $\tau_2, \zeta_2 < 0 \leqs \zeta_1$ then \eqref{eq:weightsinequality1} holds if 
\begin{equation}\label{eq:weightsinequality2}
\eabs{ x - y }^{t_2} \eabs{ x + y }^{-t_1} \eabs{ \xi - \eta }^{s_2}  \eabs{ \xi + \eta }^{-s_1}
\eabs{ \xi }^{-\tau_2} \eabs{ y }^{-\zeta_2}
\lesssim
\eabs{ \eta }^{\zeta_1}. 
\end{equation}
From \eqref{eq:Peetre} it follows that 
\begin{equation*}
\eabs{ \xi - \eta }^{s_2}  \eabs{ \xi + \eta }^{-s_1}
\eabs{ \xi }^{-\tau_2} \lesssim
\eabs{ \eta }^{\zeta_1}
\end{equation*}
provided $|s_1| + |s_2| \leqs \zeta_1$ and $s_2 \leqs s_1 + \tau_2$, and then \eqref{eq:weightsinequality2} 
reduces to 
\begin{equation}\label{eq:weightsinequality3}
\eabs{ x - y }^{t_2} \eabs{ x + y }^{-t_1} 
\eabs{ y }^{-\zeta_2} \lesssim 1. 
\end{equation}

Writing $y = \frac12(x + y) - \frac12(x-y)$ 
it follows again from \eqref{eq:Peetre} that \eqref{eq:weightsinequality3} is fulfilled provided
$t_2 \leqs \zeta_2$ and $t_1 \geqs - \zeta_2$. 
We summarize: If $a \in M_{0,\tau_2,\zeta_1,\zeta_2}^{\infty,1}(\rr {2d})$
with $\tau_2, \zeta_2 < 0 \leqs \zeta_1$ then the operator \eqref{eq:contpsdomodsp} is continuous for all $p,q \in [1,\infty]$ provided
\begin{equation}\label{eq:weightpowers}
\begin{aligned}
& t_1 \geqs - \zeta_2, \quad t_2 \leqs \zeta_2, \\
& s_2 \leqs s_1 + \tau_2, \quad |s_1| + |s_2| \leqs \zeta_1. 
\end{aligned}
\end{equation}

Finally we discuss Gabor frames \cite{Grochenig1,Wahlberg1} for $L^2(\rr {2d})$ defined by the Gaussian window function 
\begin{equation}\label{eq:gaussianwindow}
\Phi(X) = 2^d \pi^{d/2} \exp(-|X|^2), \quad X \in \rr {2d}. 
\end{equation}
Here we denote by 
\begin{equation}\label{eq:symplecticTFshift}
\Pi(X,Y) f(Z) = e^{2 i \sigma(Y,Z)} f(Z-X), \quad X,Y,Z \in \rr {2d}, 
\end{equation}
the composition of the translation operator and 
the \emph{symplectic} modulation operator $f \mapsto e^{2 i \sigma(Y,\cdot)} f$, defined using the symplectic form \cite{Folland1}
\begin{equation*}
\sigma( (x,\xi), (y,\eta) ) = \la y, \xi \ra - \la x, \eta \ra, \quad (x,\xi), (y,\eta) \in \rr {2d}.
\end{equation*}

If the parameters $a, b > 0$ satisfy $ab < \pi$ then $\{ \Pi(an,bk) \Phi \}_{n,k \in \zz {2d}}$ is a 
Gabor frame for $L^2(\rr {2d})$ which means that 
\begin{equation*}
A \| f \|_{L^2}^2 \leqs \sum_{{\bm \Lambda} \in \Theta} | \left(
f,\Pi({\bm \Lambda}) \Phi \right)|^2 \leqs B \|f \|_{L^2}^2 \quad
\forall f \in L^2(\rr {2d}), 
\end{equation*}
for some $0 < A \leqs B < \infty$. 
Here $\Theta = \{
(a n, b k) \}_{n,k \in \zz {2d}} \subseteq \rr {4d}$ is a lattice determined by $a, b > 0$. 
We denote elements in the lattice as 
\begin{equation}\label{eq:lattice}
{\bm \Lambda} = (\Lambda,\Lambda') \in \Theta, \quad \Lambda = a n, \quad \Lambda' =b k, \quad n, k \in \zz {2d}. 
\end{equation}

The Gabor frame operator $S = S(\Phi,\Theta)$, defined by
\begin{equation*}
Sf = \sum_{{\bm \Lambda} \in \Theta} ( f,\Pi({\bm \Lambda}) \Phi
) \, \Pi({\bm \Lambda}) \Phi,
\end{equation*}
is positive and invertible on $L^2(\rr {2d})$. For any $f \in
L^2(\rr {2d})$ we have a Gabor expansion
\begin{equation}\label{eq:gaborexp}
f = \sum_{{\bm \Lambda} \in \Theta} ( f,\Pi({\bm \Lambda} )
\widetilde{\Phi} ) \, \Pi({\bm \Lambda}) \Phi, \quad f \in
L^2(\rr {2d}),
\end{equation}
with unconditional convergence \cite{Grochenig1}. Here
$\widetilde{\Phi} = S^{-1} \Phi \in \cS(\rr {2d})$ \cite{Grochenig1,Janssen1} 
is the \emph{canonical dual window}.

Gabor theory has been generalized from the Hilbert space $L^2$ to
modulation spaces by Feichtinger, Gr\"ochenig and Leinert
\cite{Feichtinger1,Grochenig1,Grochenig2}. 
In fact if $\omega \in \mascP(\rr {4d})$ we have the norm equivalence
\begin{equation}\label{eq:gabornorm}
C^{-1} \| f \|_{M_\omega^{p,q}(\rr {2d})} 
\leqs \Big( \sum_{k \in \zz {2d}} \Big( \sum_{n \in \zz {2d}} | ( f,\Pi(a n,b k) \widetilde \Phi ) |^p \omega(a n, b k)^p \Big)^{\frac{q}{p}} \Big)^{\frac1q} \leqs
C\| f \|_{M_\omega^{p,q}(\rr {2d})}
\end{equation}
where $C > 0$, 
for the whole scale $1 \leqs p,q \leqs \infty$ of modulation spaces.
The expansion \eqref{eq:gaborexp} holds with unconditional convergence
if $p,q < \infty$, and in the weak$^*$ topology of
$M_{1/v}^\infty(\rr {2d})$ otherwise for some $v \in \mascP(\rr {4d})$. 

\subsection{Stochastic processes and generalized stochastic processes}\label{subsec:genstochproc}

Let $\Omega$ be a sample space equipped with a $\sigma$-algebra $\cB$ of subsets of $\Omega$
and let $\bP$ be a probability measure defined on $\cB$. 
The space of $\co$-valued random variables is the Hilbert space $L^2(\Omega)$ equipped with the inner product
\begin{equation*}
L^2(\Omega) \times L^2(\Omega) \ni (X,Y) \mapsto \bE ( X \overline Y ) = (X, Y )_{L^2(\Omega)}
\end{equation*}
where 
\begin{equation*}
\bE X = \int_{\Omega} X(\omega) \bP( \dd \omega)
\end{equation*}
is the expectation functional (integral). 
We write $X \perp Y$ if $\bE ( X \overline Y ) = 0$. 
The Hilbert subspace of $L^2(\Omega)$ of zero mean random variables is denoted $L_0^2(\Omega)$, and 
thus $\bE X = 0$ and $\bE |X|^2 < \infty$ for each element $X \in L_0^2(\Omega)$.

A second order zero mean stochastic process is a locally Bochner integrable map $f: \rr d \to L_0^2(\Omega)$. 
This space of stochastic processes is denoted $L_{\rm{loc}}^1( \rr d, L_0^2(\Omega) )$, 
and $C( \rr d, L_0^2(\Omega) ) \subseteq L_{\rm{loc}}^1( \rr d, L_0^2(\Omega) )$ denotes the subspace of 
continuous stochastic processes.
The cross-covariance function of $f,g \in L_{\rm{loc}}^1( \rr d, L_0^2(\Omega) )$ is
\begin{equation*}
k_{fg} ( x ,y) = \bE ( f(x)  \overline{g(y)} ), \quad x, y \in \rr d. 
\end{equation*}
The function $k_f = k_{f f}$ is the (auto-)covariance function of $f$. 
By the Cauchy--Schwarz inequality in $L^2(\Omega)$ we have 
\begin{equation*}
|k_{f g} ( x ,y)|^2 
\leqs k_f ( x ,x) \, k_g ( y ,y), \quad x, y \in \rr d,
\end{equation*}
which implies that $k_{f g}$ extends to an element in $\cD'(\rr {2d})$ \cite{Gelfand4} when $f,g \in L_{\rm{loc}}^1( \rr d, L_0^2(\Omega) )$. 
Defining the cross-covariance operator 
\begin{equation*}
( \cK_{f g} \fy, \psi) = ( k_{f g}, \psi \otimes \overline \fy )_{L^2(\rr {2d} )}, \quad \fy, \psi \in C_c^\infty(\rr d), 
\end{equation*}
yields a continuous linear cross-covariance operator $\cK_{f g}: C_c^\infty(\rr d) \to \cD'(\rr d)$. 

Since Fubini's theorem gives
\begin{align*}
( k_f, \fy \otimes \overline \fy )_{L^2(\rr {2d})}
& = \bE \left| \int_{\rr d}   f(x) \overline{\fy (x)} \, \dd x \right|^2
\geqs 0 \quad \forall \fy \in C_c^\infty(\rr d),    
\end{align*}
it follows that $k_f$ is the kernel of a non-negative linear continuous covariance operator $\cK_f = \cK_{f f}: C_c^\infty(\rr d) \to \cD'(\rr d)$.

We use Gelfand and Vilenkin's concept of generalized stochastic process defined as follows \cite{Gelfand4}, cf. \cite{Feichtinger2,Hormann1,Keville1}. 

\begin{defn}\label{def:gsp}
A second order zero mean generalized stochastic process (GSP) $u \in \cL ( C_c^\infty(\rr d), L_0^2(\Omega) )$ is a 
conjugate linear continuous operator $u: C_c^\infty(\rr d) \to L_0^2(\Omega)$, 
written as $(u,\fy) \in L_0^2(\Omega)$ for $\fy \in C_c^\infty(\rr d)$. 
\end{defn}

Thus for each $K \Subset \rr d$ there exist $C > 0$ and $k \in \no$ 
such that  
\begin{equation*}
\| (u,\fy) \|_{L_0^2(\Omega)}
\leqs C \sum_{|\alpha| \leqs k} \sup_{x \in \rr d} |\pd \alpha \fy (x)|, \quad \fy \in C_c^\infty(K).
\end{equation*}

As in ordinary distribution theory \cite{Gelfand1,Hormander1} a GSP is always differentiable as
\begin{equation*}
( D^\alpha u,\fy) = (u, D^\alpha \fy), \quad \fy \in C_c^\infty (\rr d), \quad \alpha \in \nn d. 
\end{equation*}
For $u \in \cL ( C_c^\infty(\rr d), L_0^2(\Omega) )$ we denote 
by $L_u^2(\Omega) \subseteq L_0^2(\Omega)$ the \emph{time domain} \cite{Keville1} of $u$, i.e.
the linear subspace of the closure of its image:
\begin{equation}\label{eq:timedomain}
L_u^2(\Omega) 
= {\rm closure} \, \{ (u,\fy), \ \fy \in C_c^\infty(\rr d)  \} \subseteq L_0^2(\Omega)
\end{equation}

Let $u,v \in  \cL ( C_c^\infty(\rr d), L_0^2(\Omega) )$. The cross-covariance distribution is defined by 
\begin{equation}\label{eq:crosscovdist}
(k_{u v}, \fy \otimes \overline \psi ) 
= \bE ( (u,\fy)  \overline{ (v,\psi)} ), \quad \fy, \psi \in C_c^\infty (\rr d). 
\end{equation}
For each pair $K_1, K_2 \Subset \rr d$
there is $C > 0$ and $k_1, k_2 \in \no$ such that 
\begin{align*}
|(k_{u v}, \fy \otimes \overline \psi )|
& \leqs C \sum_{|\alpha| \leqs k_1} \sup_{x \in \rr d} |\pd \alpha \fy (x)| \, 
\sum_{|\beta| \leqs k_2} \sup_{x \in \rr d} |\pd \beta \psi (x)|, \\
& \qquad \qquad \fy \in C_c^\infty(K_1), \quad \psi \in C_c^\infty(K_2). 
\end{align*}
Thus $k_{u v}$ is a sesquilinear (conjugate linear in the first argument) continuous form on $C_c^\infty(\rr d) \times C_c^\infty(\rr d)$. 
If we equip $\cD'(\rr d)$ with its weak$^*$ topology then 
\begin{equation*}
( \cK_{u v} \psi, \fy ) = (k_{u v}, \fy \otimes \overline \psi ), \quad \fy, \psi \in C_c^\infty(\rr d), 
\end{equation*}
defines a linear continuous operator $\cK_{u v}: C_c^\infty(\rr d) \to \cD' (\rr d)$, 
called the cross-covariance operator. 
Note that $(\cK_{u v} \psi, \fy ) = \overline{ (\cK_{v u} \fy, \psi ) }$. 
From the Schwartz kernel theorem \cite[Theorem~5.2.1]{Hormander1} it follows that $k_{u v} \in \cD'(\rr {2d})$. 
If $k_{u v} \equiv 0$ then $\cK_{u v} = 0$ and we say that $u$ and $v$ are uncorrelated. 
This means that 
\begin{equation*}
L_u^2(\Omega) \perp L_v^2(\Omega). 
\end{equation*}

If $v = u$ then we call $k_u = k_{u u} \in \cD'(\rr {2d})$ the (auto-)covariance distribution
and $\cK_u = \cK_{u u} \in \cL( C_c^\infty(\rr d), \cD' (\rr d) )$ the (auto-)covariance operator of $u$. 
Then $(k_u, \fy \otimes \overline \fy ) \geqs 0$ for all $\fy \in C_c^\infty(\rr d)$
so $\cK_u \geqs 0$ is non-negative in the sense of 
\begin{equation*}
( \cK_u \fy, \fy ) \geqs 0 \quad \forall \fy \in C_c^\infty(\rr d). 
\end{equation*}

\begin{rem}\label{rem:existenceGSP}
In \cite[Chapter~3 \S 2.3]{Gelfand4} and \cite[proof of Proposition~8.1]{Wahlberg2} it is shown that any sesquilinear continuous form $k$ on 
$C_c^\infty(\rr d) \times C_c^\infty(\rr d)$ which gives rise to a 
non-negative continuous operator $C_c^\infty(\rr d) \to \cD' (\rr d)$ 
is the auto-covariance distribution of a Gaussian GSP
$u \in \cL ( C_c^\infty(\rr d) , L_0^2(\Omega) )$ such that 
\begin{align*}
\bE \left( (u, \fy) \overline{(u, \psi)} \right) & = (k, \fy \otimes \overline \psi ), \\
\bE \left( (u, \fy) (u, \psi) \right) & \equiv 0, \quad \fy, \psi \in C_c^\infty(\rr d). 
\end{align*}
\end{rem}

\begin{example}\label{ex:example1}
If $p > 0$ there exists a GSP $u \in \cL ( C_c^\infty(\rr d) , L_0^2(\Omega) )$
such that $(k_u, \fy \otimes \overline \psi) = p (\psi,\fy)_{L^2}$. 
This is a consequence of \cite[Chapter~3 \S 2.3]{Gelfand4}. 
Then the covariance operator equals a positive multiple of the identity: $\cK_u = p I$, 
and $k_u (x,y) = p \delta_0(x-y)$. 
This GSP is called white noise with power $p$, and is an example of a
GSP which is not a stochastic process. 
\end{example}

\begin{example}\label{ex:example2}
Let $p > 0$, let $u$ be a white noise GSP with power $p$, and let $\alpha \in \nn d$.
Then $D^\alpha u$ is a GSP with covariance distribution
\begin{align*}
(k_{D^\alpha u}, \fy \otimes \overline \psi ) 
& = \bE ( (u, D^\alpha \fy)  \overline{ (u, D^\alpha \psi)} )
= ( k_u, D^\alpha \fy \otimes \overline{ D^\alpha \psi } ) \\
& = p ( D^\alpha \psi, D^\alpha \fy )_{L^2}
= p ( D^{2 \alpha} \psi, \fy)_{L^2}, 
 \quad \fy, \psi \in C_c^\infty (\rr d), 
\end{align*}
where we integrate by parts. 
Thus $\cK_{D^\alpha u} = p D^{2 \alpha}$,
and the covariance distribution is 
\begin{equation*}
k_{D^\alpha u} =p  \left( D^{\alpha} \otimes (-D)^{\alpha} \right) \left( (1 \otimes \delta_0) \circ T^{-1} \right)
\end{equation*}
with the matrix (cf. \eqref{eq:T})
\begin{equation*}
T^{-1} = 
\left(
\begin{array}{ll}
\frac12 I_d & \frac12 I_d \\
I_d & - I_d
\end{array}
\right) \in \rr {2d \times 2d}. 
\end{equation*}
In fact $k_u = p (1 \otimes \delta_0) \circ T^{-1}$ according to Example \ref{ex:example1}. 
\end{example}

A stochastic process $f \in L_{\rm{loc}}^1 (\rr d, L_0^2(\Omega) )$ may be considered a generalized stochastic process in $\cL ( C_c^\infty(\rr d) , L_0^2(\Omega) )$ by means of 
\begin{equation*}
C_c^\infty (\rr d) \ni \fy \mapsto (f, \fy) = \int_{\rr d} f(x ) \, \overline{ \fy (x) } \dd x, 
\end{equation*}
and then there is consistency between the covariance function and the covariance distribution as
\begin{equation*}
( k_f, \psi \otimes \overline \fy )_{L^2(\rr {2d})}
= \iint_{\rr {2d}} \bE \left( f(x) \overline{ f(y)}  \right) 
\overline{\psi(x)} \fy (y) \dd x \, \dd y
= \bE \left( (f, \psi) \overline{ (f, \fy)} \right). 
\end{equation*}

\begin{rem}\label{rem:Fouriertransform}
As in ordinary distribution theory \cite{Gelfand1,Hormander1} a generalized stochastic 
process may extend to the domain $\cS(\rr d) \supseteq C_c^\infty(\rr d)$ and is then called \emph{tempered}. 
If $u$ is a tempered GSP then $u \in \cL ( \cS(\rr d), L_0^2(\Omega) ) \subseteq \cL ( C_c^\infty(\rr d), L_0^2(\Omega) )$. 
The Fourier transform of $u$ can then be defined as $(\wh u, \wh \fy) = (u, \fy)$ for $\fy \in \cS(\rr d)$.
\end{rem}

If $u \in \cL ( \cS(\rr d), L_0^2(\Omega) )$ is a tempered $\gsp$ then the covariance operator is continuous 
$\cK_u: \cS(\rr d) \to \cS'(\rr d)$ and non-negative. Its Schwartz kernel $k_u \in \cS'(\rr {2d})$ is a tempered distribution, and 
its Weyl symbol is denoted $a_u \in \cS'(\rr {2d})$. The distributions $k_u, a_u \in \cS'(\rr {2d})$ are connected by 
\eqref{eq:KernelWeylsymbol}.

\section{Filtering and the equation for the optimal filter}\label{sec:optimalfilter}

Suppose a stochastic process $g$ is a noisy observation of a message
stochastic process $f$. 
A common assumption is the additive model 
\begin{equation*}
g = f + w, 
\end{equation*}
where $w$
is a noise stochastic process which is uncorrelated with $f$, i.e. $\bE( f(x) 
\overline{w(y)}) = 0$ for all $x,y \in \rr d$. 
To recover $f$ approximately from $g$ we may try to 
filter $g$ linearly using a linear operator $F$ with kernel function $k$ to get an optimal
approximation of $f$, denoted $f_o$, \cite{Fomin1,Wiener1} as
\begin{equation}\label{eq:estimatorsp}
f_o(x) = (F g)(x) = \int_{\rr d} k(x,y) g(y) \dd y.
\end{equation}
This integral makes sense e.g. if we assume $k \in L^2(\rr {2d})$ and $g \in L^2(\rr d, L_0^2(\Omega))$. 

The optimality of the approximation $f_o$ refers to the postulate to pick 
a filter kernel $k$ that minimizes the mean
square error 
\begin{equation*}
\bE | f(x) - f_o(x)|^2 = \| f(x) - f_o(x) \|_{L^2(\Omega)}^2
\end{equation*}
for all $x \in \rr d$. 

The idea of filtering can be extended from stochastic processes to
generalized stochastic processes as follows. Let $u,v  \in \cL ( C_c^\infty(\rr d) , L_0^2(\Omega) )$
have auto-covariance distributions $k_u, k_v \in \cD'(\rr {2d})$,
respectively, and cross-covariance distribution $k_{u v} \in \cD'(\rr {2d})$. 
These are sesquilinear continuous forms on $C_c^\infty (\rr d) \times  C_c^\infty (\rr d)$. 

Each of these kernels gives rise to continuous linear (auto-, cross-)covariance operators 
denoted $\cK_u, \cK_v, \cK_{u v}: C_c^\infty (\rr d) \to \cD'(\rr d)$ respectively.  
Then $\cK_u \geqs 0$ and $\cK_v \geqs 0$ on $C_c^\infty(\rr d)$. 

We will assume that
\begin{equation}\label{eq:filterassumption1}
F: C_c^\infty(\rr d) \to C_c^\infty(\rr d)
\end{equation}
is a linear continuous operator, called \emph{filter}. 
The adjoint $F^*$ defined by
\begin{equation}\label{eq:adjoint}
( F \fy, \psi) = ( \fy, F^* \psi), \quad \fy, \psi \in C_c^\infty(\rr d), 
\end{equation}
is then a continuous linear operator $C_c^\infty(\rr d) \to \cD' (\rr d)$. 
We assume continuity of the adjoint:
\begin{equation}\label{eq:filterassumption2}
F^*: C_c^\infty(\rr d) \to C_c^\infty(\rr d).
\end{equation}
Replacing $\fy \in C_c^\infty(\rr d)$ with $\fy \in \cD'(\rr d)$, the formula \eqref{eq:adjoint} extends $F$ uniquely to a linear continuous operator on $\cD'(\rr d)$, equipped with the weak$^*$ topology. 

Given a linear operator $F$ that satisfy \eqref{eq:filterassumption1}, \eqref{eq:filterassumption2} and $v \in \cL ( C_c^\infty(\rr d) , L_0^2(\Omega) )$ we define the filtered generalized stochastic process $u_o \in \cL ( C_c^\infty(\rr d) , L_0^2(\Omega) )$ as $u_o = F v$, that is
\begin{equation}\label{eq:estimatorgsp}
(u_o, \fy ) = (F v, \fy ) = ( v, F^* \fy), \quad \fy \in C_c^\infty(\rr d),
\end{equation}
which extends \eqref{eq:estimatorsp} from stochastic processes $g \in L_{\rm{loc}}^1(\rr d, L_0^2(\Omega))$ to $v \in \cL ( C_c^\infty(\rr d) , L_0^2(\Omega) )$. 
To wit, if $F$ has integral kernel $k$ and $v \in L_{\rm{loc}}^1(\rr d, L_0^2(\Omega))$ then \eqref{eq:estimatorgsp} reduces to
\begin{equation*}
\iint_{\rr {2d}} k(x,y) v(y) \overline{\fy(x)} \, \dd x \, \dd y.
\end{equation*}

Let $u,v \in \cL ( C_c^\infty(\rr d) , L_0^2(\Omega) )$. We assume that we have access to $v$ but not to $u$. 
The $\gsp$ $v$ is assumed to be a noise corrupted version of $u$, and we want to recover the latter with optimally small error
using a filter $F$ as in \eqref{eq:estimatorgsp}. 

For a fixed arbitrary $\fy \in C_c^\infty(\rr d) \setminus \{ 0 \}$ we may derive an
equation for the filter operator $F$ that is optimal in the sense
of minimizing the mean square error 
\begin{equation*}
\bE | ( u ,\fy) - ( u_o,\fy ) |^2. 
\end{equation*}
By \eqref{eq:estimatorgsp} we have $(u_o,\fy) \in L_v^2(\Omega)$ for any filter linear operator $F$
that satisfies \eqref{eq:filterassumption1} and \eqref{eq:filterassumption2}.
We may formulate optimality as follows.

\begin{prop}\label{prop:ortogonal}
Let $u,v \in \cL ( C_c^\infty(\rr d) , L_0^2(\Omega) )$, let $\fy \in C_c^\infty(\rr d) \setminus \{ 0 \}$ be fixed, 
and suppose $F$ is a linear operator that satisfy \eqref{eq:filterassumption1} and \eqref{eq:filterassumption2}. 
The filter $F$ in \eqref{eq:estimatorgsp} is optimal for $\fy$ if and only if
\begin{equation}\label{eq:optimalortogonal}
(u - u_o,\fy) \perp L_v^2(\Omega). 
\end{equation}
\end{prop}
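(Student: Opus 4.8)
The plan is to recognize \eqref{eq:optimalortogonal} as the standard orthogonal projection characterization of the best approximation in a Hilbert space, applied to the closed subspace $L_v^2(\Omega) \subseteq L_0^2(\Omega)$. The key observation is that for any admissible filter $F$ satisfying \eqref{eq:filterassumption1} and \eqref{eq:filterassumption2}, formula \eqref{eq:estimatorgsp} gives $(u_o, \fy) = (v, F^* \fy) \in L_v^2(\Omega)$, since $F^* \fy \in C_c^\infty(\rr d)$ and $L_v^2(\Omega)$ is the closure of $\{ (v,\psi) : \psi \in C_c^\infty(\rr d) \}$. Thus minimizing $\bE | (u,\fy) - (u_o, \fy) |^2 = \| (u,\fy) - (u_o,\fy) \|_{L_0^2(\Omega)}^2$ over admissible $F$ amounts to approximating the fixed vector $(u,\fy) \in L_0^2(\Omega)$ by elements of $L_v^2(\Omega)$.

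First I would fix $\fy$ and set $\xi = (u,\fy) \in L_0^2(\Omega)$ and $\eta_o = (u_o,\fy) \in L_v^2(\Omega)$. For the ``if'' direction, assume \eqref{eq:optimalortogonal}, that is $\xi - \eta_o \perp L_v^2(\Omega)$. Then for any other admissible filter $\widetilde F$ producing $\widetilde \eta = (\widetilde u_o, \fy) \in L_v^2(\Omega)$, I would write $\xi - \widetilde \eta = (\xi - \eta_o) + (\eta_o - \widetilde \eta)$ and note $\eta_o - \widetilde \eta \in L_v^2(\Omega)$, so the Pythagorean identity gives
\begin{equation*}
\| \xi - \widetilde \eta \|_{L_0^2(\Omega)}^2 = \| \xi - \eta_o \|_{L_0^2(\Omega)}^2 + \| \eta_o - \widetilde \eta \|_{L_0^2(\Omega)}^2 \geqs \| \xi - \eta_o \|_{L_0^2(\Omega)}^2,
\end{equation*}
establishing optimality of $F$ for $\fy$. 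For the ``only if'' direction, assume $F$ is optimal and suppose toward a contradiction that $\xi - \eta_o$ is not orthogonal to $L_v^2(\Omega)$, so there is $\zeta \in L_v^2(\Omega)$ with $( \xi - \eta_o, \zeta )_{L_0^2(\Omega)} = c \neq 0$, and I may normalize $\| \zeta \|_{L_0^2(\Omega)} = 1$. Perturbing $\eta_o$ to $\eta_o + c \zeta \in L_v^2(\Omega)$ gives $\| \xi - \eta_o - c \zeta \|^2 = \| \xi - \eta_o \|^2 - |c|^2 < \| \xi - \eta_o \|^2$, contradicting optimality.

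The only genuine subtlety, and the step I would treat most carefully, is confirming that the competing approximant $\eta_o + c \zeta$ actually arises from an \emph{admissible} filter in the sense of \eqref{eq:filterassumption1} and \eqref{eq:filterassumption2}, so that the variational comparison is legitimate within the stated class. Since $\zeta$ lies in the closure $L_v^2(\Omega)$, it is a limit of elements $(v, \psi_n)$ with $\psi_n \in C_c^\infty(\rr d)$, and one must check that these perturbations can be realized by filters of the required form; the cleanest route is to reformulate optimality intrinsically as best approximation of $\xi$ in the closed subspace $L_v^2(\Omega)$, for which the projection theorem directly yields the orthogonality \eqref{eq:optimalortogonal} without reference to a particular parametrization by $F$. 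In other words, I would argue that as $F$ ranges over admissible filters the values $(u_o,\fy)$ range over a set whose closure is exactly $L_v^2(\Omega)$, and then invoke the Hilbert space projection theorem; the forward-looking expectation is that this identification of ranges is the crux, while the Pythagorean estimates are routine.
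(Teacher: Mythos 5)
Your overall strategy --- viewing optimality for fixed $\fy$ as best approximation of $(u,\fy)$ by the closed subspace $L_v^2(\Omega)$ and invoking Hilbert space projection --- is exactly the paper's approach, and your ``if'' direction is complete: every admissible filter's output $(v,\widetilde F^*\fy)$ lies in $L_v^2(\Omega)$, so orthogonality plus the Pythagorean identity gives optimality. The gap is in the ``only if'' direction. You correctly identify the crux --- that a strictly better approximant must be realized by an \emph{admissible} filter, equivalently that the set $\{ (u_o,\fy) : F \text{ admissible} \}$ has closure exactly $L_v^2(\Omega)$ --- but you never prove it: you write ``one must check'' and ``I would argue,'' and your proposed ``cleanest route'' (reformulating optimality intrinsically over $L_v^2(\Omega)$) presupposes the very identification of ranges that needs proving. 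Without it, the infimum of the error over the admissible class could a priori exceed the distance from $(u,\fy)$ to $L_v^2(\Omega)$, and then an optimal $F$ need not produce the orthogonal projection; also, your direct perturbation $\eta_o + c\zeta$ with $\zeta$ merely in the closure $L_v^2(\Omega)$ is not visibly the output of any filter satisfying \eqref{eq:filterassumption1} and \eqref{eq:filterassumption2}.

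The missing step is exactly what the paper supplies, and it is short: given any $\psi \in C_c^\infty(\rr d)$, define the rank-one operator $F_0^* g = \| \fy \|_{L^2}^{-2} (g,\fy)\, \psi$, whose adjoint is $F_0 g = \| \fy \|_{L^2}^{-2} (g,\psi)\, \fy$; both map $C_c^\infty(\rr d) \to C_c^\infty(\rr d)$ continuously (this is where the hypothesis $\fy \neq 0$ enters), and $F_0^* \fy = \psi$. Hence every $(v,\psi)$ with $\psi \in C_c^\infty(\rr d)$ is achievable, so by \eqref{eq:timedomain} the achievable outputs are dense in $L_v^2(\Omega)$. The paper then finishes the contrapositive concretely: writing $(u,\fy) = (u_o,\fy) + Y_0 + Y_1$ with $0 \neq Y_0 \in L_v^2(\Omega)$ and $Y_1 \perp L_v^2(\Omega)$, it picks $\psi$ with $\| Y_0 - (v,\psi) \|_{L^2(\Omega)} < \| Y_0 \|_{L^2(\Omega)}$ and verifies that the admissible competitor $F + F_0$ has strictly smaller error. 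Inserting this construction --- either in your perturbation argument or to justify your density claim --- closes the gap; the rest of your proposal is sound.
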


\begin{proof}
We may uniquely decompose $(u,\fy) = X_0 + X_1$ where $X_0 \in L_v^2(\Omega)$ and $X_1 \in L_v^2(\Omega)^\perp$. 
By elementary Hilbert space theory $X_0$ is the unique optimal vector
in $L_v^2(\Omega)$ that satisfies
\begin{equation}\label{eq:bestapprox}
X_0 = {\arg \inf}_{X \in L_v^2(\Omega)} \| (u,\fy) - X \|_{L^2(\Omega)},
\end{equation}
and $\| X_1 \|_{L^2(\Omega)} = \inf_{X \in L_v^2(\Omega)} \| (u,\fy) - X
\|_{L^2(\Omega)}$. 

Suppose $(u - u_o,\fy) \perp L_v^2(\Omega)$. 
Then $(u,\fy) = (u_o,\fy) + Y_1$ where $Y_1 \in L_v^2(\Omega)^\perp$. 
By \eqref{eq:estimatorgsp} we have $(u_o,\fy) \in L_v^2(\Omega)$, and it follows from the uniqueness of the decomposition
$(u,\fy) = X_0 + X_1$ that $Y_1 = X_1$ and $(u_o,\fy)  = X_0$. 
By \eqref{eq:bestapprox} it thus follows that $(u_o,\fy)$ is optimal.

On the other hand, suppose that $(u - u_o,\fy) \notin L_v^2(\Omega)^\perp$, 
i.e. $(u,\fy) = (u_o,\fy)  + Y_0 + Y_1$ where $Y_0 \in
L_v^2(\Omega)$, $Y_1 \in L_v^2(\Omega)^\perp$ and $Y_0 \neq 0$. Again by the uniqueness
of the decomposition $(u,\fy) = X_0 + X_1$ we have
$(u_o,\fy) = X_0 - Y_0$. 
There exists $\psi \in C_c^\infty(\rr d) \setminus 0$ such that 
$\| Y_0 - (v,\psi) \|_{L^2(\Omega)}^2 < \| Y_0 \|_{L^2(\Omega)}^2$. 

If $F_0^*$ is the linear continuous operator on $C_c^\infty(\rr d)$ 
\begin{equation*}
F_0^* g = \| \fy \|_{L^2}^{-2} \left(g, \fy \right) \psi, \quad g \in C_c^\infty(\rr d), 
\end{equation*}
then 
\begin{equation*}
F_0 g = \| \fy \|_{L^2}^{-2} (g,\psi) \fy, \quad g \in C_c^\infty(\rr d). 
\end{equation*}
Thus $F_0$ and $F_0^*$ are both continuous on $C_c^\infty (\rr d)$, and $F_0^* \fy = \psi$. 
We have
\begin{align*}
\| (u,\fy)  - (v, (F + F_0)^* \fy \|_{L^2(\Omega)}^2 
& = \| X_1 + ( Y_0 - ( v,
\psi ) \|_{L^2(\Omega)}^2 \\
& = \| X_1 \|_{L^2(\Omega)}^2 + \| Y_0 - ( v, \psi ) \|_{L^2(\Omega)}^2 \\
&  < \| X_1 \|_{L^2(\Omega)}^2 + \| Y_0 \|_{L^2(\Omega)}^2 
= \| ( u - u_o, \fy) \|_{L^2(\Omega)}^2,
\end{align*}
which means that the filter $F$ in $(u_o,\fy)$ is not
optimal.
\end{proof}

Condition \eqref{eq:optimalortogonal} generalized to all $\fy \in C_c^\infty(\rr d)$
can be expressed with \eqref{eq:crosscovdist}
as
\begin{equation*}
(k_{u v}, \fy \otimes \overline{\psi} ) 
= (k_v, F^* \fy \otimes \overline{\psi} ),
 \quad \forall \fy, \psi \in C_c^\infty(\rr d).
\end{equation*}
Thus 
\begin{equation*}
( \cK_{u v} \psi, \fy ) 
= ( \cK_v \psi, F^* \fy )
= (F \cK_v \psi, \fy )
 \quad \forall \fy, \psi \in C_c^\infty(\rr d), 
\end{equation*}
which means that
\begin{equation}\label{eq:operatoreq1}
\cK_{u v} = F \cK_v 
\end{equation}
as operators $C_c^\infty(\rr d) \to \cD'(\rr d)$. 
Given the operators $\cK_{u v}, \cK_v \in  \cL( C_c^\infty(\rr d), \cD' (\rr d) )$, this is an operator equation for the optimal linear filter $F$, 
which as noted above may be considered a continuous linear operator $F: \cD' (\rr d) \to \cD' (\rr d)$. 

\begin{rem}\label{rem:OpEq}
In \cite{Wahlberg1} the operator equation \eqref{eq:operatoreq1} is deduced in a different functional framework. 
In fact we use $\cL ( M^1(\rr d) , L_0^2(\Omega) )$ as the class of $\gsp$s, 
and the filters are pseudodifferential operators with symbols in modulation spaces. 
The space $\cL ( M^1(\rr d) , L_0^2(\Omega) )$ of $\gsp$s is studied more carefully in \cite{Hormann1,Feichtinger2,Keville1}.
The test function space is Feichtinger's algebra $M^1(\rr d)$ which is a Fourier invariant Banach space of continuous integrable functions with integrable Fourier transform,
and $\cS(\rr d) \subseteq M^1(\rr d)$ is an embedding. 
This gives a smaller space of $\gsp$s than the space of tempered $\gsp$s: $\cL ( M^1(\rr d) , L_0^2(\Omega) ) \subseteq \cL ( \cS(\rr d) , L_0^2(\Omega) )$. 
If $u \in \cL ( M^1(\rr d) , L_0^2(\Omega) )$ then the covariance operator is continuous $\cK_u: M^1(\rr d) \to M^\infty(\rr d)$ with 
$M^\infty(\rr d)$ equipped with its weak$^*$ topology. 
\end{rem}

By the Pythagorean theorem in $L^2(\Omega)$ the optimal (minimal)
mean square error for $\fy \in C_c^\infty(\rr d)$ is
\begin{equation}\label{eq:minmse1}
\begin{aligned}
J(\fy) & = \bE | (u - u_o,\fy) |^2 
= \bE | (u,\fy) |^2 - \bE | (u_o,\fy) |^2 
= \bE | (u,\fy) |^2 - \bE | (v, F^* \fy) |^2 \\
& = ( k_u, \fy \otimes \overline \fy ) - ( k_v, F^* \fy \otimes \overline{ F^* \fy }) \\
& = ( ( \cK_u - F \cK_v F^*)\fy, \fy) 
= ( ( \cK_u - \cK_{u v} F^*)\fy, \fy). 
\end{aligned}
\end{equation}

Suppose now the more specific model, common in engineering applications, 
\begin{equation}\label{eq:signalplusnoise}
v = u + w
\end{equation}
where $u, w \in  \cL ( C_c^\infty(\rr d) , L_0^2(\Omega) )$, 
$u$ is considered a signal and $w$ is considered as noise uncorrelated to $u$, that is $k_{u w} \equiv 0$. 
Then $\cK_{u v} = \cK_u$ and $\cK_v = \cK_u + \cK_w$, since $k_v = k_u + k_w$. 
The operator equation \eqref{eq:operatoreq1} is then
\begin{equation}\label{eq:operatoreq2}
\cK_u = F ( \cK_u +  \cK_w)
\end{equation}
in the cone of non-negative continuous linear operators $C_c^\infty(\rr d) \to \cD'(\rr d)$. 
The minimal mean square error for $\fy \in C_c^\infty(\rr d)$ is obtained from \eqref{eq:minmse1} and \eqref{eq:operatoreq2} as
\begin{equation}\label{eq:minmse2}
J(\fy) = \overline{J(\fy)}
= \overline{ (  \cK_u (I - F^* ) \fy, \fy) }
= ( \cK_u \fy, (I - F^* ) \fy) 
= ( (I - F ) \cK_u \fy, \fy)
= ( F \cK_w \fy, \fy).
\end{equation}

As a tool to solve \eqref{eq:operatoreq2} we will use

\begin{lem}\label{lem:operatorequation}
If $T: \cS(\rr d) \to \cS'(\rr d)$ is a linear continuous operator 
then 
$T = 0$ if and only if $(T f,f) = 0$ for all $f \in \cS(\rr d)$. 
\end{lem}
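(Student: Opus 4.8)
The forward implication is immediate: if $T = 0$ then $(Tf,f) = 0$ for every $f$. The entire content is the converse, and since the scalar field is $\co$ the natural instrument is the complex polarization identity. The plan is to regard the map
\begin{equation*}
B(f,g) = (Tf, g), \quad f,g \in \cS(\rr d),
\end{equation*}
as a sesquilinear form: it is linear in $f$ because $T$ is linear and the pairing is linear in its first slot, and it is conjugate linear in $g$ because the pairing $(\cdot,\cdot)$ is, by the conventions of Subsection~\ref{subsec:notations}, antilinear in the second argument. The hypothesis $(Tf,f) = 0$ says exactly that the associated quadratic form $Q(f) = B(f,f)$ vanishes identically.

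Next I would recover the off-diagonal values of $B$ from its diagonal. For a sesquilinear form over $\co$ one has
\begin{equation*}
4 B(f,g) = \sum_{k=0}^{3} i^{k} \, Q\!\left( f + i^{k} g \right),
\end{equation*}
which is checked by expanding each term using linearity in the first and conjugate linearity in the second argument: the coefficients of $Q(f)$, $Q(g)$ and $B(g,f)$ collapse because $\sum_{k=0}^{3} i^{k} = 0$ and $\sum_{k=0}^{3} i^{2k} = 0$, while the coefficient of $B(f,g)$ equals $\sum_{k=0}^{3} i^{k} i^{-k} = 4$. Since $Q \equiv 0$ by assumption, this identity forces $B(f,g) = (Tf, g) = 0$ for all $f,g \in \cS(\rr d)$. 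Fixing $f$, the distribution $Tf \in \cS'(\rr d)$ then annihilates every Schwartz function, hence $Tf = 0$; as $f$ was arbitrary, $T = 0$.

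The only point requiring genuine care, and which I view as the crux, is the bookkeeping of the complex conjugation sitting in the second slot of the pairing: it is precisely this conjugation that makes the four-term complex polarization identity applicable and lets it reconstruct $B(f,g)$ from the diagonal without any symmetry or self-adjointness hypothesis on $T$. Over $\ro$ the same strategy fails --- there the two-term real polarization identity recovers only the symmetric part of $B$ --- so the complex structure is essential and should be flagged. No continuity of $T$ beyond its bare linearity, together with the definition of the $\cS$--$\cS'$ pairing, is actually used in the argument.
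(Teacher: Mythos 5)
Your proof is correct and uses precisely the same tool as the paper: the complex polarization identity for the sesquilinear form $B(f,g) = (Tf,g)$, which in your summation form $4B(f,g) = \sum_{k=0}^{3} i^k Q(f+i^k g)$ is exactly the paper's expanded identity $( T(f+g), f+g ) - ( T(f-g), f-g ) + i ( ( T(f+ig), f+ig ) - ( T(f-ig), f-ig ) ) = 4 ( T f,g)$. Your additional remarks --- the verification of the coefficient collapse, the role of the conjugate-linear second slot, and the observation that continuity of $T$ is never used --- are all accurate.
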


\begin{proof}
The claim is an immediate consequence of the polarization identity
\begin{equation*}
( T(f+g), f+g ) - ( T(f-g), f-g ) + i \Big( ( T(f+ig), f+ig ) - ( T(f-ig), f-ig ) \Big) = 4 ( T f,g).
\end{equation*}
\end{proof}

\section{The optimal filter for wide-sense stationary generalized stochastic processes}
\label{sec:optimalwss}

\subsection{Wide-sense stationary $\gsp$s}
\label{subsec:WSSgsp}

A zero mean stochastic process $f$ is said to be wide-sense stationary (WSS) if its covariance function satisfies 
$k_f (x,y) = \kappa_f(x-y)$ for a function $\kappa_f: \rr d \to \co$. 
This means that the stochastic process is second order translation invariant. 
The function $\kappa_f$ is non-negative definite in the following sense:
\begin{equation*}
\sum_{j,k=1}^n \kappa_f (x_j-x_k) z_j \overline{z}_k \geqs 0 \quad \forall \{ x_j \}_{j=1}^n \subseteq \rr d, \ \{ z_j \}_{j=1}^n \subseteq \co, \quad n \in \no \setminus 0. 
\end{equation*}

Bochner's theorem \cite{Katznelson1,Rudin1} says that a function $\kappa: \rr d \to \co$ is continuous and non-negative definite
if and only if $\mu = \cF \kappa$ is a non-negative bounded Radon measure on $\rr d$. 

Let $u \in \cL ( C_c^\infty(\rr d) , L_0^2(\Omega) )$ be a $\gsp$. 
Then $k_u \in \cD'(\rr {2d})$ as explained in Section \ref{subsec:genstochproc}. 
We call $u$ WSS if its covariance distribution $k_u$ is translation invariant: 
\begin{equation*}
(k_u, T_x \fy \otimes T_x \overline{\psi}) = (k_u, \fy \otimes \overline{\psi}) \quad \forall \fy, \psi \in C_c^\infty (\rr d) \quad \forall x \in \rr d, 
\end{equation*}
cf. \cite[Chapter 3, \S3]{Gelfand4}.
This means that the covariance operator is translation invariant as $T_x \cK_u = \cK_u T_x$
for all $x \in \rr d$. 
Under the assumption WSS there exists $\kappa_u \in \cD'(\rr d)$ such that 
\begin{equation*}
(k_u, \fy \otimes \overline{\psi})
= (\kappa_u, \fy * \psi^* ) \quad \forall \fy, \psi \in C_c^\infty (\rr d), 
\end{equation*}
where $\psi^*(x) = \overline{\psi (-x)}$, 
see 
\cite[Chapter 2, \S3.5]{Gelfand4} and \cite[Theorem~3.1.4$'$]{Hormander1}. 
By the Bochner--Schwartz theorem \cite[Chapter 2, \S3.3, Theorem~3]{Gelfand4} there exists a spectral non-negative tempered Radon measure $\mu_u = (2 \pi)^{\frac{d}{2}} \wh \kappa_u \in \cM_+ (\rr d)$ that satisfies \eqref{eq:temperedmeasure} for some $s \geqs 0$. 
Hence (cf. \eqref{eq:convolutionFourier})
\begin{equation*}
( \kappa_u, \fy * \psi^* ) = \int_{\rr d}  \wh \psi (\xi) \, \overline{ \wh \fy (\xi) } \, \dd \mu_u (\xi). 
\end{equation*}
This gives for $\fy, \psi \in C_c^\infty (\rr d)$
\begin{equation}\label{eq:gspwss1}
\bE \left( (u, \fy ) \overline{(u,\psi)} \right)
= (k_u, \fy \otimes \overline{\psi})
= ( \cK_u \psi, \fy)
= ( \kappa_u, \fy * \psi^* ) 
= ( \psi, \fy )_{\cF L^2(\mu_u)}
\end{equation}
and in particular
\begin{equation}\label{eq:gspwss2}
\| (u, \fy ) \|_{L^2(\Omega)}^2
= \int_{\rr d}  |\wh \fy (\xi) |^2 \, \dd \mu_u (\xi). 
\end{equation}

Let $u \in \cL ( C_c^\infty(\rr d) , L_0^2(\Omega) )$ be WSS and denote by $\mu_u \in \cM_+ (\rr d)$ the corresponding spectral non-negative tempered Radon measure. 
The identity \eqref{eq:gspwss1} implies that 
the linear map 
$C_c^\infty(\rr d) \ni \fy \mapsto \overline{(u,\fy) }$ 
extends uniquely to 
a unitary operator between Hilbert spaces $\cF L^2(\mu_u) \to L_u^2(\Omega)$ (cf. \eqref{eq:timedomain}), 
and $\cK_u$ extends uniquely to the identity operator on $\cF L^2(\mu_u)$.
Alternatively we may regard $\cK_u$ as the convolution operator
\begin{equation}\label{eq:WSSconvop}
\cK_u f = \kappa_u * f
\end{equation}
and we may extend the domain to $f \in \cS(\rr d)$.
(Note that $\kappa_u \in \cS'(\rr d)$.)
This yields a continuous operator 
\begin{equation}\label{eq:WSSconvopcont}
\cK_u: \cS(\rr d) \to \left( C^\infty \cap \cS' \right) (\rr d). 
\end{equation}

\begin{rem}\label{rem:wssexistence}
For any given $\mu \in \cM_+ (\rr d)$ there exists a WSS $\gsp$ $u$ such that $\mu_u = \mu$. 
This is a consequence of Remark \ref{rem:existenceGSP}.
\end{rem}

\begin{rem}\label{rem:FouriertransformWSS}
If $u$ is a WSS $\gsp$ then from Remark \ref{rem:Fouriertransform} and $\cS(\rr d) \subseteq \cF L^2(\mu_u)$ it follows that $u$ is tempered, 
possess a Fourier transform $\wh u: L^2(\mu_u) \to L_u^2(\Omega)$
such that $\fy \mapsto \overline{( \wh u,\fy) }$
is unitary, and $\supp \wh u = \supp \mu_u$.
\end{rem}

\begin{rem}\label{rem:derivativeWSS}
If $u$ is a WSS $\gsp$ and $\alpha \in \nn d$ then \eqref{eq:gspwss1} gives
\begin{equation*}
\left( k_{D^\alpha u}, \fy \otimes \overline{\psi} \right)
= (k_u, D^\alpha \fy \otimes \overline{D^\alpha \psi})
= \int_{\rr d}  \wh \psi (\xi) \, \overline{ \wh \fy (\xi) } \xi^{2 \alpha } \, \dd \mu_u (\xi). 
\end{equation*}
Thus $D^\alpha u$ is WSS and its spectral measure is $\dd \mu_{D^\alpha u} = \xi^{2 \alpha } \, \dd \mu_u$.
\end{rem}

\begin{rem}\label{rem:feichtingeralgebra}
Consider the framework $\cL ( M^1(\rr d) , L_0^2(\Omega) )$ with test functions in Feichtinger's algebra, 
cf. Remark \ref{rem:OpEq}. 
If $u \in \cL ( M^1(\rr d) , L_0^2(\Omega) )$ is WSS then its spectral non-negative measure $\mu_u$
is translation bounded \cite[Corollary~6]{Feichtinger2}
which means that 
\begin{equation*}
\sup_{x \in \rr d} |( \mu_u, T_x \fy)| < \infty \quad \forall \fy \in C_c(\rr d). 
\end{equation*}
\end{rem}

\begin{rem}\label{rem:WSSboundedmeasure}
If $s = 0$ in \eqref{eq:temperedmeasure} then the measure $\mu_u \in \cM_+(\rr d)$ is bounded. 
This implies that 
\begin{equation}\label{eq:boundedspectralmeas}
\kappa_u(x) = ( 2 \pi )^{-d} \int_{\rr d} e^{i \la x, \xi \ra } \dd \mu_u (\xi)
= (2 \pi)^{-\frac{d}{2}} \cF^{-1} \mu_u (x)
\end{equation}
is a function in $(C \cap L^\infty )(\rr d)$.
In turn this means that the initial assumption $u \in  \cL ( \cS(\rr d) , L_0^2(\Omega) )$
can be strengthened to $u \in C(\rr d, L_0^2(\Omega))$, that is $u$ is a continuous 
stochastic process. 
\end{rem}

\begin{rem}\label{rem:WSSabscontmeas}
If the measure $\mu_u \in \cM_+(\rr d)$ is absolutely continuous with respect to Lebesgue measure
then we may write $\dd \mu_u(\xi) = f_u(\xi) \dd \xi$ where $f_u \in L_{\rm{loc}}^1(\rr d)$ \cite{Folland1}.
If further $f_u \in L^1(\rr d)$ then $\mu_u \in \cM_+(\rr d)$ is finite. 
By \eqref{eq:boundedspectralmeas} $\kappa_u = (2 \pi)^{-\frac{d}{2}} \cF^{-1} f_u \in C_0(\rr d)$.
\end{rem}

Suppose $u$ is a WSS $\gsp$ with spectral measure $\mu_u \in \cM_+ (\rr d)$, 
and suppose $\mu \in \cM_+ (\rr d)$ satisfies $\mu \geqs \mu_u$. 
From \eqref{eq:gspwss1} and the Cauchy--Schwarz inequality in $L^2(\mu)$ we get
\begin{equation}\label{eq:bilinearbound}
|( k_u ,\fy \otimes \overline \psi )|
= |( \cK_u \psi, \fy)|
= |( \psi, \fy)_{\cF L^2(\mu_u)}|
\leqs \| \psi \|_{\cF L^2(\mu)} \,\| \fy \|_{\cF L^2(\mu)}. 
\end{equation}
Thus $k_u$ extends to a sesquilinear form on $\cF L^2(\mu) \times \cF L^2(\mu)$. 
From \cite[Corollary~II.2]{Reed1} we get the following conclusion. 
There exists a unique $\cT_{u,\mu} \in \cL( \cF L^2(\mu) )$ such that $\| \cT_{u,\mu} \|_{ \cL( \cF L^2(\mu) ) } \leqs 1$ and 
\begin{equation}\label{eq:bilin2lin}
( \cK_u \psi, \fy)
= ( \cT_{u,\mu} \psi, \fy)_{\cF L^2(\mu) }, \quad \psi, \fy \in \cF L^2(\mu). 
\end{equation}
If $\mu = \mu_u$ then $\cT_{u,\mu} = \cK_u = \id_{\cF L^2(\mu_u)}$ as already observed. 

\begin{example}\label{ex:example1b}
For the white noise GSP $u$ in Example \ref{ex:example1}
we have $(k_u, \fy \otimes \overline \psi) = p (\psi,\fy)_{L^2} = p (\wh \psi, \wh \fy)_{L^2}$
by Plancherel's theorem. The corresponding spectral tempered measure is therefore a positive multiple of 
Lebesgue measure
$\dd \mu_u = p \, \dd \xi$ on $\rr d$. 
\end{example}

\begin{example}\label{ex:example2b}
Let $u$ be white noise with power $p > 0$ as in Example \ref{ex:example1} and let $\alpha \in \nn d$. 
Remark \ref{rem:derivativeWSS} combined with Example \ref{ex:example1b} gives 
$\mu_{D^\alpha u} = p \, \xi^{2 \alpha} \dd \xi$, and 
the covariance operator is $\cK_{D^\alpha u} = p D^{2 \alpha}$. 
This example reveals that although the covariance operator $\cK_u$ of a WSS $\gsp$ $u$ 
always extends to the identity operator on $\cF L^2(\mu_u)$, it may happen that $\cK_u$ fails to be continuous 
on $L^2(\rr d)$. In fact in this example $\cK_u$ is unbounded considered as an operator in $L^2(\rr d)$, 
equipped with domain $\cS(\rr d)$. 
\end{example}

\begin{rem}\label{rem:WSSunbounded}
In general it is not possible to consider $\cK_u$ as an unbounded operator in $L^2(\rr d)$. 
In fact suppose $\mu_u = \delta_0 \in \cM_+(\rr d)$. 
Then $\kappa_u = (2 \pi)^{-\frac{d}{2}} \cF^{-1} \mu_u = (2 \pi )^{-d}$
which implies that $\cK_u f(x) = \kappa_u * f(x) = (2 \pi )^{-d} \int_{\rr d} f(y) \dd y$.
Thus $\cK_u f \in L^2(\rr d)$ can happen only for functions $f$ with integral zero, and then $\cK_u f = 0$. 
There is no interesting function space (domain) for $f$ for which $\cK_u f \in L^2(\rr d) \setminus \{ 0\}$.  
For the stochastic process $u \in C (\rr d, L_0^2(\Omega) )$ that corresponds to $\mu_u = \delta_0$ we have 
$\bE | u(x) - u(0)|^2 = 0$ so $u(x) = u_0 \in L_0^2(\Omega)$ is constant for all $x \in \rr d$. 
\end{rem}

\begin{example}\label{ex:example2c}
We return to Example \ref{ex:example2b} and investigate the corresponding space 
$\cF L^2(\mu_u)$.
If $p = d = 1$ and $n \in \no$ then $\mu = \mu_{D^n u} =  |\xi|^{2 n} \dd \xi$, and hence
\begin{equation*}
\| f \|_{\cF L^2(\mu)}^2
= \int_{\ro} | \wh f(\xi) |^2 |\xi|^{2 n} \dd \xi.
\end{equation*}
This is the square norm of the homogeneous Hilbert Sobolev space $\dot H_n^2(\ro)$ \cite{Bergh1}. 
If $s \in \ro$ then by Remark \ref{rem:wssexistence} there exists a WSS tempered $\gsp$ $u$ on $\rr d$
such that $\mu_u = \eabs{\xi}^{2 s} \dd \xi$. 
In this case $\cF L^2(\mu) = H_s^2(\rr d)$ which denotes the usual Hilbert Sobolev space \cite{Bergh1}. 
\end{example}

\subsection{Filtering of WSS $\gsp$s}
\label{subsec:WSSfiltering}

Let $v \in \cL ( C_c^\infty(\rr d) , L_0^2(\Omega) )$ be WSS so that 
$C_c^\infty(\rr d) \ni \fy \mapsto \overline{( v,\fy) }$ extends to 
a unitary map $\cF L^2(\mu_v) \to L_v^2(\Omega)$
where $\mu_v \in \cM_+(\rr d)$ is the spectral Radon measure corresponding to $v$. 
Let $f \in \cF L^\infty (\mu_v)$. 
If $\fy \in \cF L^2 (\mu_v)$ then $\wh f \, \wh \fy \in L^2(\mu_v)$,
that is $\cF^{-1}( \wh f \, \wh \fy ) \in \cF L^2(\mu_v)$.
Defining a linear filter operator $F$ as the convolution 
\begin{equation}\label{eq:filterwss1}
F \fy = ( 2 \pi )^{-\frac{d}{2}} f * \fy =  \cF^{-1}( \wh f \, \wh \fy ), \quad \fy \in \cF L^2(\mu_v), 
\end{equation}
cf. \eqref{eq:convolutionFourier}, 
gives a translation invariant continuous operator on $\cF L^2(\mu_v)$. 
In fact
\begin{equation}\label{eq:filtertransinvbound}
( 2 \pi )^{-\frac{d}{2}} \| f * \fy \|_{\cF L^2 (\mu_v)} 
\leqs 
\| f \|_{\cF L^\infty (\mu_v)} \| \fy \|_{\cF L^2 (\mu_v)}.
\end{equation}

If $\fy, \psi \in \cF L^2(\mu_v)$ then we have, using $\wh f^* = \overline{ \wh f}$, 
\begin{align*}
(F \fy, \psi)_{\cF L^2(\mu_v)}  
& = \int_{\rr d} \wh f (\xi) \, \wh \fy (\xi) \, \overline{\wh \psi(\xi)} \, \dd \mu_v(\xi) \\
& = \int_{\rr d} \wh \fy (\xi) \, \overline{ \wh f^* (\xi) \, \wh \psi(\xi)} \, \dd \mu_v(\xi) \\
& = ( 2 \pi )^{-\frac{d}{2}} ( \fy, f^* * \psi)_{\cF L^2(\mu_v)} 
\end{align*}
which means that the adjoint of $F$ 
with respect to $(\cdot, \cdot)_{\cF L^2(\mu_v)}$ 
is 
\begin{equation}\label{eq:Fstarti}
F^* \fy = ( 2 \pi )^{-\frac{d}{2}} f^* * \fy. 
\end{equation}

We may hence define a filtered $\gsp$ $u_o \in \cL ( C_c^\infty(\rr d) , L_0^2(\Omega) )$,
cf. \eqref{eq:estimatorgsp}, as 
\begin{equation}\label{eq:filterwss2}
(u_o, \fy) = ( 2 \pi )^{-\frac{d}{2}} (v, f^* * \fy), \quad \fy \in \cF L^2 (\mu_v).
\end{equation}
The filtered GSP $u_o$ is WSS. Indeed since convolution is translation invariant as $g * T_x \fy = T_x( g * \fy)$, 
and $v$ is assumed to be WSS, 
we have for any $x \in \rr d$ and $\fy, \psi \in C_c^\infty(\rr d)$
\begin{align*}
( k_{u_o}, T_x \fy \otimes T_x \overline{\psi} )
& = \bE \left(  (u_o, T_x \fy) \overline{(u_o, T_x \psi)} \right) \\
& = ( 2 \pi )^{-d} \, \bE \left(  (v, f^* * T_x \fy) \overline{(v, f^* * T_x \psi)} \right) \\
& = ( 2 \pi )^{-d} \, \bE \left(  (v, f^* * \fy) \overline{(v, f^* * \psi)} \right) 
= ( k_{u_o}, \fy \otimes \overline{\psi} ). 
\end{align*}
%

\subsection{The optimal filter for WSS $\gsp$s}
\label{subsec:WSSoptfilter}

The next lemma will be needed in the proof of Theorem \ref{thm:optimalwss}. 

\begin{lem}\label{lem:measuremonotonic}
Let $\mu \in \cM_+(\rr d)$.
If $f \in L_{\rm loc}^1 (\mu)$ is real-valued
then 
\begin{equation*}
\int_A f(x) \, \dd \mu \geqs 0 \quad \forall A \in \cB(\rr d) \quad \Longrightarrow \quad f(x) \geqs 0 \ \, \mbox{for $\mu$-a.e.} \ \, x \in \rr d.  
\end{equation*}
\end{lem}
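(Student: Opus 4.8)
The plan is to argue by contradiction, after first restricting attention to sets on which $f$ is genuinely integrable so that the hypothesis can be invoked. Suppose the conclusion fails, so that the set $N = \{ x \in \rr d : f(x) < 0 \}$ has $\mu(N) > 0$. The goal is to produce a single Borel set on which $f$ is bounded above by a negative constant and which has positive finite measure, since on such a set the hypothesis $\int_A f \, \dd \mu \geqs 0$ collides with $f < 0$.

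First I would exhaust $\rr d$ by the open balls $\rB_R$, $R \in \no$. Because $\mu \in \cM_+(\rr d)$ is in particular a Radon measure, compact sets have finite measure, so $\mu(\rB_R) < \infty$, and by local integrability $f \in L^1(\mu|_{\rB_R})$. I would then decompose $N = \bigcup_{R \in \no} \bigcup_{n \in \no} A_{n,R}$ using

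\begin{equation*}
A_{n,R} = \{ x \in \rB_R : f(x) \leqs -\tfrac1n \},
\end{equation*}

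so that countable subadditivity forces $\mu(A_{n,R}) > 0$ for at least one pair $(n,R)$.

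For that pair the set $A_{n,R} \in \cB(\rr d)$ has finite measure and $f$ is integrable over it, so the hypothesis applies and yields

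\begin{equation*}
0 \leqs \int_{A_{n,R}} f \, \dd \mu \leqs - \frac1n \, \mu(A_{n,R}) < 0,
\end{equation*}

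the desired contradiction; hence $\mu(N) = 0$, that is $f \geqs 0$ for $\mu$-a.e. $x$.

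The step I expect to matter most — and the one I would be careful to justify — is not the chain of inequalities but the preliminary reduction: under mere local integrability the quantity $\int_A f \, \dd \mu$ need not be defined for arbitrary Borel $A$, so the hypothesis is only usable on suitable sets. Restricting to the relatively compact $A_{n,R}$, where finiteness of $\mu$ on compacta (a Radon property built into the definition of $\cM_+(\rr d)$) guarantees $f \in L^1(\mu|_{\rB_R})$, is exactly what makes the inequality both well-defined and contradictory.
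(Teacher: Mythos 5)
Your proof is correct, but it takes a different (more self-contained) route than the paper. The paper does not argue by contradiction and uses no level-set decomposition: it applies the hypothesis once, to the full set $N = \{ x \in \rr d : f(x) \leqs 0 \}$. Since $f \leqs 0$ on $N$, the integral $\int_N f \, \dd \mu$ is automatically well defined in $[-\infty, 0]$ (no finiteness of $\mu(N)$ or integrability of $f$ over the possibly unbounded set $N$ is required for this), so the hypothesis forces $\int_N f \, \dd \mu = 0$, and then \cite[Theorem~1.39~(a)]{Rudin2} --- a nonnegative measurable function with vanishing integral over a set vanishes a.e.\ on that set, applied to $-f$ on $N$ --- gives $f = 0$ $\mu$-a.e.\ on $N$, hence $f \geqs 0$ $\mu$-a.e. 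Your decomposition into the sets $A_{n,R} = \{ x \in \rB_R : f(x) \leqs -1/n \}$ is in effect an inlined proof of that cited theorem, rearranged as a contradiction; what it buys is self-containedness and an explicit treatment of the integrability point you flag, at the cost of brevity. Note, however, that the caution about $\int_A f \, \dd \mu$ being undefined is not logically necessary on either route: on any Borel subset of the nonpositivity set the integral always exists in $[-\infty, 0]$ because the positive part of $f$ integrates to zero there, and this is all either proof uses (in your contradiction, even $\mu(A_{n,R}) = \infty$ would only make the integral equal to $-\infty$, still contradicting the hypothesis). So your restriction to the relatively compact sets $\rB_R$ is a reasonable safety measure, not a step the argument actually depends on.
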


\begin{proof}
Set $N = \{x \in \rr d: \, f(x) \leqs 0 \} \in \cB(\rr d)$. Then 
\begin{equation*}
0 \leqs \int_N f(x) \, \dd \mu \leqs 0
\end{equation*}
that is $\int_N f(x) \, \dd \mu = 0$. 
Now \cite[Theorem~1.39~(a)]{Rudin2} implies $f (x) = 0$ for $\mu$-a.e. $x \in N$. 
\end{proof}

The following result is the main statement of this section. 
It reveals that the optimal filter for the uncorrelated additive noise problem for WSS $\gsp$s is 
a Radon--Nikodym derivative in the frequency domain. 
It formalizes the well established convention in engineering to think of the Wiener filter
in the frequency domain
as a fraction of spectral densities \cite{Fomin1,Vantrees1,Weinstein1,Wiener1}.

\begin{thm}\label{thm:optimalwss}
Suppose that $u,w \in \cL ( C_c^\infty(\rr d) , L_0^2(\Omega) )$ are two uncorrelated WSS generalized stochastic processes, 
let the corresponding spectral tempered non-negative Radon measures be denoted by $\mu_u, \mu_w \in \cM_+(\rr d)$ respectively, 
and let $v = u + w$. 

The unique convolution filter $f \in \cF L^\infty( \mu_u + \mu_w)$ 
that applied to $v$ 
as in \eqref{eq:filterwss2} 
which is mean square error optimal to recover $u$
is $f = \cF^{-1} \wh f \in \cS'(\rr d)$ where $\wh f$ is the Radon--Nikodym derivative given by
\begin{equation}\label{eq:RNconclusion}
\mu_u = \wh f \, (\mu_u + \mu_w). 
\end{equation}
We have $0 \leqs \wh f (\xi)\leqs 1$ for $(\mu_u+\mu_w)$-a.e. $\xi \in \rr d$, 
\begin{equation}\label{eq:supportfhat}
\esssupp \wh f = \supp \mu_u,
\end{equation}
and
the minimal error for $\fy \in \cF L^2(\mu_u + \mu_w)$ is 
\begin{equation}\label{eq:msegspwss}
J(\fy)
= \bE \left| (u - u_o, \fy) \right|^2
= \int_{\rr d} \wh f (\xi) \, |\wh \fy (\xi)|^2 \, \dd \mu_w 
= \int_{\rr d} \left( 1 - \wh f (\xi) \right) |\wh \fy (\xi)|^2 \, \dd \mu_u. 
\end{equation}
\end{thm}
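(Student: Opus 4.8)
The plan is to transport the abstract optimality criterion of Proposition~\ref{prop:ortogonal} into the frequency domain, where the WSS structure turns every pairing into an integral against $\mu_u$ or $\mu_u + \mu_w$. A filter $F$ of the convolution form \eqref{eq:filterwss2} is optimal for every $\fy \in C_c^\infty(\rr d)$ exactly when $(u - u_o, \fy) \perp L_v^2(\Omega)$, and since $L_v^2(\Omega)$ is the closed linear span of the random variables $(v,\psi)$, this is equivalent to $\bE\big( (u-u_o,\fy)\,\overline{(v,\psi)} \big) = 0$ for all $\fy, \psi \in C_c^\infty(\rr d)$. First I would evaluate the two contributions: using that $u$ and $w$ are uncorrelated together with \eqref{eq:gspwss1} applied to $u$ gives $\bE\big( (u,\fy)\overline{(v,\psi)} \big) = \int_{\rr d} \wh\psi\,\overline{\wh\fy}\,\dd\mu_u$, while \eqref{eq:gspwss1} applied to the WSS process $v = u+w$ (whose spectral measure is $\mu_u + \mu_w$) together with $\wh{F^*\fy} = \overline{\wh f}\,\wh\fy$ gives $\bE\big( (u_o,\fy)\overline{(v,\psi)} \big) = \bE\big( (v,F^*\fy)\overline{(v,\psi)} \big) = \int_{\rr d} \wh f\,\wh\psi\,\overline{\wh\fy}\,\dd(\mu_u+\mu_w)$. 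Hence optimality is equivalent to the weak identity
\begin{equation*}
\int_{\rr d} \wh\psi\,\overline{\wh\fy}\,\dd\mu_u = \int_{\rr d} \wh f\,\wh\psi\,\overline{\wh\fy}\,\dd(\mu_u+\mu_w), \quad \fy, \psi \in C_c^\infty(\rr d),
\end{equation*}
which is precisely the weak form of the measure identity \eqref{eq:RNconclusion}.

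Next I would exhibit $\wh f$ and settle uniqueness. Since $0 \leqs \mu_u \leqs \mu_u + \mu_w$, the measure $\mu_u$ is absolutely continuous with respect to $\mu_u + \mu_w$, so the Radon--Nikodym theorem supplies a density $\wh f = \dd\mu_u / \dd(\mu_u+\mu_w) \in L_{\rm{loc}}^1(\mu_u+\mu_w)$, unique up to $(\mu_u+\mu_w)$-null sets, which by construction realizes \eqref{eq:RNconclusion} and hence the weak identity above. Conversely, any optimal convolutor satisfies that weak identity, read as $\int_{\rr d} \wh\psi\,\overline{\wh\fy}\,\dd\nu = 0$ for all $\fy,\psi$, where $\nu = \mu_u - \wh f\,(\mu_u+\mu_w)$; because $C_c^\infty(\rr d)$ is dense in $\cF L^2(\mu_u+\mu_w)$ the products $\wh\psi\,\overline{\wh\fy}$ are total enough to force $\nu = 0$, so $\wh f$ coincides $(\mu_u+\mu_w)$-almost everywhere with the Radon--Nikodym derivative.

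The bounds $0 \leqs \wh f \leqs 1$ then follow from two applications of \lemref{lem:measuremonotonic}: for every $A \in \cB(\rr d)$ one has $\int_A \wh f\,\dd(\mu_u+\mu_w) = \mu_u(A) \geqs 0$, giving $\wh f \geqs 0$, and $\int_A (1-\wh f)\,\dd(\mu_u+\mu_w) = \mu_w(A) \geqs 0$, giving $\wh f \leqs 1$. In particular $\wh f \in L^\infty(\mu_u+\mu_w)$, so $f = \cF^{-1}\wh f \in \cF L^\infty(\mu_u+\mu_w)$ as claimed. For the support statement \eqref{eq:supportfhat} I would use $\dd\mu_u = \wh f\,\dd(\mu_u+\mu_w)$ with $\wh f \geqs 0$: on any open set $U$ the relation $\wh f = 0$ holds $(\mu_u+\mu_w)$-almost everywhere on $U$ if and only if $\mu_u(U) = 0$, so the largest open set on which $\wh f$ vanishes almost everywhere is the complement of $\supp\mu_u$; taking the essential support with respect to $\mu_u + \mu_w$ this is exactly $\esssupp\wh f = \supp\mu_u$.

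Finally I would compute the minimal error from the Pythagorean identity \eqref{eq:minmse1}. Here $\bE|(u,\fy)|^2 = \int_{\rr d} |\wh\fy|^2\,\dd\mu_u$ and, since $|\wh{F^*\fy}|^2 = \wh f^2\,|\wh\fy|^2$, also $\bE|(u_o,\fy)|^2 = \bE|(v,F^*\fy)|^2 = \int_{\rr d} \wh f^2\,|\wh\fy|^2\,\dd(\mu_u+\mu_w)$. Subtracting and using $\wh f\,\dd(\mu_u+\mu_w) = \dd\mu_u$ gives $J(\fy) = \int_{\rr d} \wh f(1-\wh f)|\wh\fy|^2\,\dd(\mu_u+\mu_w) = \int_{\rr d} (1-\wh f)|\wh\fy|^2\,\dd\mu_u$, and the remaining equality in \eqref{eq:msegspwss} follows from the rearrangement $(1-\wh f)\,\dd\mu_u = \wh f\,\dd\mu_w$ of \eqref{eq:RNconclusion}; all integrals converge because $0 \leqs \wh f \leqs 1$ and $\wh\fy \in L^2(\mu_u+\mu_w)$. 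The step I expect to be the main obstacle is the first one: carrying the conjugate-linear pairing conventions and the Fourier transform $\wh{F^*\fy} = \overline{\wh f}\,\wh\fy$ correctly through the orthogonality relation so that the abstract criterion collapses exactly onto the weak form of \eqref{eq:RNconclusion}. Once that reduction is in place the remaining measure-theoretic steps are routine applications of the Radon--Nikodym theorem and \lemref{lem:measuremonotonic}.
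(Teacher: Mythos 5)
Your proposal is correct in substance, and it reaches the measure identity \eqref{eq:RNconclusion} by a genuinely different route than the paper. The paper treats \eqref{eq:operatoreq2} as an equation for bounded operators on $\cF L^2(\mu)$, $\mu = \mu_u + \mu_w$, uses the polarization Lemma \ref{lem:operatorequation} to reduce it to the \emph{diagonal} identity $\int |\wh \fy|^2 \dd \mu_u = \int |\wh \fy|^2 \wh f \, \dd\mu$ for $\wh\fy \in C_c^\infty(\rr d)$, and then spends its main technical effort on the explicit approximation $g_n = \chi \sqrt{g * \psi_n + \tfrac1n}$ showing that such squares suffice to test against every non-negative $g \in C_c(\rr d)$. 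You instead go straight from the orthogonality criterion of Proposition \ref{prop:ortogonal}, which hands you the polarized, off-diagonal identity $\int \wh\psi\, \overline{\wh\fy}\, \dd\mu_u = \int \wh f\, \wh\psi\, \overline{\wh\fy}\, \dd\mu$ for all $\fy,\psi \in C_c^\infty(\rr d)$ at once, bypassing Lemma \ref{lem:operatorequation}. Your evaluation of the two expectations, the two applications of Lemma \ref{lem:measuremonotonic}, the support argument, and the error computation via \eqref{eq:minmse1} are all correct and parallel to the paper. One small caveat: Proposition \ref{prop:ortogonal} is stated for filters obeying \eqref{eq:filterassumption1}--\eqref{eq:filterassumption2}, which a convolution filter with $f \in \cF L^\infty(\mu)$ need not satisfy; you should note that the underlying Hilbert space projection argument applies verbatim because $(u_o,\fy) \in L_v^2(\Omega)$ by \eqref{eq:filterwss2}.

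The one step you cannot leave as stated is the passage from the weak identity to $\nu = 0$, where $\nu = \mu_u - \wh f\,\mu$. The phrase ``because $C_c^\infty(\rr d)$ is dense in $\cF L^2(\mu_u+\mu_w)$ the products $\wh\psi\,\overline{\wh\fy}$ are total enough'' is an assertion, not an argument: the functions $\wh\fy, \wh\psi$ are Paley--Wiener functions, \emph{not} compactly supported, so you cannot localize directly, and this is precisely the point at which the paper does its real work. Two short completions are available. (1) Both sides of your weak identity are bounded sesquilinear forms on $\cF L^2(\mu) \times \cF L^2(\mu)$ (for the left-hand side use Cauchy--Schwarz and $\mu_u \leqs \mu$; for the right-hand side use $\wh f \in L^\infty(\mu)$, which you have in the uniqueness direction since the competing filter lies in $\cF L^\infty(\mu)$); agreeing on the dense subspace $C_c^\infty(\rr d)$ they agree on all of $\cF L^2(\mu)$, and choosing $\wh\fy = \wh\psi = \chi_A$ for bounded Borel $A$ (admissible since $\mu(A) < \infty$) gives exactly \eqref{eq:measRNformula}, which then extends to all Borel sets by countable additivity. (2) Alternatively, write $\wh\psi\,\overline{\wh\fy} = (2\pi)^{-\frac d2} \wh{\psi * \fy^*}$, take $\fy^* = \rho_n$ an approximate identity, and use that $\nu$ is a tempered measure together with the bound $|\wh{\psi * \rho_n}| \lesssim |\wh\psi|$ and dominated convergence to conclude $\int \wh\psi \, \dd\nu = 0$ for all $\psi \in C_c^\infty(\rr d)$; injectivity of the Fourier transform on $\cS'(\rr d)$ then forces $\nu = 0$. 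With either completion your proof is sound.
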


\begin{proof}
We denote the auto-covariance operators for $u$ and $w$ by $\cK _u$ and $\cK _w$ respectively, 
and we set $\mu = \mu_u + \mu_w$ with domain $\cB(\rr d)$. 
The measures $\mu_u$, $\mu_w$ and $\mu$
satisfy \eqref{eq:temperedmeasure} for some common $s \geqs 0$. 
All three measures being non-negative entails
\begin{equation*}
\cF L^2(\mu) \subseteq \cF L^2(\mu_u) \bigcap \cF L^2(\mu_w). 
\end{equation*}

From \eqref{eq:gspwss2} it follows that for $\fy \in C_c^\infty(\rr d)$ we have
\begin{equation*}
\| (u, \fy ) \|_{L^2(\Omega)} \leqs \| \fy \|_{\cF L^2(\mu)}, \quad
\| (w, \fy ) \|_{L^2(\Omega)} \leqs \| \fy \|_{\cF L^2(\mu)}, 
\end{equation*}
which implies that $u$ and $w$ restricts to continuous linear maps 
$u: \cF L^2(\mu) \to L_u^2(\Omega)$ and 
$w: \cF L^2(\mu) \to L_w^2(\Omega)$, respectively. 
From \eqref{eq:bilinearbound} we obtain
\begin{align*}
& |( \cK _u \psi, \fy)| \leqs \| \psi \|_{ \cF L^2(\mu)} \, \| \fy \|_{ \cF L^2(\mu)}, \\
& |( \cK _w \psi, \fy)| \leqs \| \psi \|_{ \cF L^2(\mu)} \, \| \fy \|_{ \cF L^2(\mu)}, \quad \fy, \psi \in \cF L^2(\mu), 
\end{align*}
and by \eqref{eq:bilin2lin} and the preceding discussion we may regard
$\cK _u = \cT_{u,\mu}$ and $\cK _w = \cT_{w,\mu}$
as bounded operators on $\cF L^2(\mu)$ with operator norm upper bounded by one.
Defining $F$ by \eqref{eq:filterwss1} for $f \in \cF L^\infty( \mu)$
we know by \eqref{eq:filtertransinvbound}, \eqref{eq:Fstarti} 
that $\| F \|_{\cL (\cF L^2(\mu))} \leqs \| f \|_{\cF L^\infty( \mu)}$
and $\| F^* \|_{\cL (\cF L^2(\mu))} \leqs \| f \|_{\cF L^\infty( \mu)}$.

These considerations lead to the conclusion that we may consider the operator equation \eqref{eq:operatoreq2}, that is 
$\cK_u = F ( \cK_u +  \cK_w)$, for the optimal filter $F$ 
as an equation for bounded operators on $\cF L^2(\mu)$.
By Lemma \ref{lem:operatorequation} and \eqref{eq:Fstarti} this equation is equivalent to 
the equality
\begin{equation}\label{eq:measureequality1}
\begin{aligned}
\int_{\rr d} | \wh \fy (\xi)|^2 \, \dd \mu_u (\xi)
& = ( \cK_u \fy, \fy) 
= (F ( \cK_u + \cK_w) \fy, \fy ) 
= ( 2 \pi )^{-\frac{d}{2}} (( \cK_u + \cK_w) \fy, f^* * \fy) \\
& = ( 2 \pi )^{-\frac{d}{2}} \Big( ( \cK_u \fy,  f^* * \fy)_{\cF L^2(\mu_u)}  + ( \cK_w \fy, f^* * \fy)_{\cF L^2(\mu_w)} \Big) \\
& = ( 2 \pi )^{-\frac{d}{2}} \int_{\rr d} \wh \fy (\xi) \, \overline{ \wh {f^* * \fy }(\xi) }\, \dd \mu (\xi) \\
& = 
\int_{\rr d} |\wh \fy (\xi)|^2 \, \wh f (\xi) \, \dd \mu (\xi)
\end{aligned}
\end{equation}
for all $\fy \in \cS(\rr d)$. 
Due to the density of $\cF C_c^\infty(\rr d)$ in $\cS(\rr d)$, the equation \eqref{eq:measureequality1}
for all $\fy \in \cS(\rr d)$ is equivalent to the same equation for all 
$\fy \in \cF C_c^\infty(\rr d)$. 

Again the equation \eqref{eq:measureequality1} for all $\wh \fy \in C_c^\infty(\rr d)$ is equivalent to the same equation 
with $| \wh \fy (\xi)|^2$ replaced by any non-negative function $g \in C_c(\rr d)$. 
In fact let $\psi \in C_c^\infty (\rr d)$ fulfill $\supp \psi \subseteq \rB_1$, $\psi \geqs 0$, $\int \psi \dd x = 1$, set $\psi_n (x) = n^{d} \psi(x n)$ for $n \in \no \setminus 0$, 
let $\chi \in C_c^\infty(\rr d)$ satisfy $0 \leqs \chi \leqs 1$, $\chi |_{\supp g} \equiv 1$, 
and set 
\begin{equation*}
g_n = \chi \sqrt{ g * \psi_n + \frac1n} \in C_c^\infty(\rr d), \quad n \in \no \setminus 0. 
\end{equation*}
Then 
\begin{equation*}
\sup_{\xi \in \rr d} \left|  | g_n (\xi) |^2 - g(\xi) \right| \to 0, \quad n \to \infty, 
\end{equation*}
which implies that \eqref{eq:measureequality1} holds for all $\fy \in \cF C_c^\infty(\rr d)$
if and only if 
\begin{equation*}
\int_{\rr d} g(\xi) \, \dd \mu_u (\xi)
= \int_{\rr d} g(\xi) \, \wh f (\xi) \, \dd \mu (\xi)
\end{equation*}
holds for all non-negative functions $g \in C_c(\rr d)$. 
This means that the operator equation \eqref{eq:operatoreq2} reduces to 
the equation for tempered measures
\begin{equation}\label{eq:operatoreqwss1}
\mu_u = \wh f \, \mu = \wh f \left( \mu_u + \mu_w \right). 
\end{equation}

Since $\mu_u, \mu_w$ are non-negative measures, it follows that $\mu_u$ is absolutely continuous
with respect to $\mu$. 
By the Radon--Nikodym theorem \cite[Theorem~3.8]{Folland1} there exists a $\mu$-measurable function $\wh f \geqs 0$ $\mu$-a.e., with $\wh f \in L_{\rm loc}^1 (\mu)$,
uniquely determined $\mu$-a.e., such that \eqref{eq:operatoreqwss1} is satisfied in the sense of 
\begin{equation}\label{eq:measRNformula}
\mu_u (A) = \int_A \wh f (\xi) \, \dd \mu(\xi), \quad A \in \cB(\rr d).
\end{equation}
We have now shown \eqref{eq:RNconclusion}.

If $A \in \cB(\rr d)$ then
\begin{equation*}
\int_A \dd \mu (\xi) 
\geqs \int_A \dd \mu_u (\xi) 
= \mu_u (A) 
= \int_A \wh f (\xi) \, \dd \mu (\xi) 
\end{equation*}
that is
\begin{equation*}
\int_A (1 - \wh f (\xi)  )\dd \mu (\xi) \geqs 0, \quad A \in \cB(\rr d).  
\end{equation*}
An application of Lemma \ref{lem:measuremonotonic} yields $\wh f (\xi) \leqs 1$ $\mu$-a.e.
It follows that 
$\wh f \in L^\infty(\mu)$, that is $f \in \cF L^\infty( \mu)$. 
Thus the Radon--Nikodym derivative $\wh f$ is indeed the unique solution in $\cF L^\infty( \mu)$ to \eqref{eq:operatoreqwss1}. 

In order to show \eqref{eq:supportfhat} 
let $\Omega \subseteq \rr d \setminus \esssupp \wh f$ be open. Then $\wh f \big|_\Omega = 0$ $\mu$-a.e. \cite{Lieb1}
which implies $\mu_u(\Omega) = 0$ by \eqref{eq:measRNformula}, and thus $\Omega \subseteq \rr d \setminus \supp \mu_u$. 
This shows 
\begin{equation}\label{eq:suppfhat2a}
\esssupp \wh f \supseteq \supp \mu_u. 
\end{equation}
If instead $\Omega \subseteq \rr d \setminus \supp \mu_u$ is open then $\mu_u(\Omega) = 0$. 
Again from \eqref{eq:measRNformula} combined with \cite[Proposition~2.16]{Folland1} it follows that 
$\wh f \big|_\Omega = 0$ $\mu$-a.e. Hence $\Omega \subseteq \rr d \setminus \esssupp \wh f$ and we have shown 
\begin{equation}\label{eq:suppfhat2b}
\esssupp \wh f \subseteq \supp \mu_u. 
\end{equation}
Now \eqref{eq:supportfhat} is a consequence of \eqref{eq:suppfhat2a} and \eqref{eq:suppfhat2b}. 

Finally we obtain from \eqref{eq:minmse2} for $\fy \in \cF L^2(\mu)$, using 
\eqref{eq:operatoreq2}, 
\eqref{eq:gspwss1}, \eqref{eq:Fstarti} and \eqref{eq:operatoreqwss1}
\begin{align*}
J(\fy) 
& = ( (I-F)\cK_u  \fy, \fy )
= (\cK_u  \fy, ( I - F^*) \fy )
= ( \fy, ( I - F^* ) \fy)_{\cF L^2(\mu_u)} \\
& =  \int_{\rr d} \left( 1 - \wh f (\xi) \right) |\wh \fy (\xi)|^2 \, \dd \mu_u \\
& = \int_{\rr d} \wh f (\xi) |\wh \fy (\xi)|^2 \, \dd \mu_w
\end{align*}
which proves \eqref{eq:msegspwss}. 
\end{proof}

\begin{rem}\label{rem:translationinvariance}
Note that $J(T_x \fy) = J(\fy)$, that is $J$ is translation invariant. 
This feature is an intuitively natural consequence of the WSS assumption. 
The frequency support of $\fy \in \cF C_c(\rr d) \subseteq \cF L^2(\mu)$ affects the minimal error as follows:
\begin{align*}
& \supp \wh \fy \cap \supp \mu_w = \emptyset \quad \Longrightarrow \quad J(\fy) = 0, \\
& \supp \wh \fy \cap \supp \mu_u = \emptyset \quad \Longrightarrow \quad J(\fy) = 0.
\end{align*}
\end{rem}

The following consequence of Theorem \ref{thm:optimalwss} states a sufficient condition 
for perfect reconstruction (zero error) of the GSP $u$. 

\begin{cor}\label{cor:zeroerror}
Under the assumptions of Theorem \ref{thm:optimalwss}, and 
\begin{equation*}
\supp \mu_u \cap  \supp \mu_w = \emptyset
\end{equation*}
we have $\wh f = \chi_{\supp \mu_u}$ and $J(\fy) = 0$ for all $\fy \in \cF L^2(\mu)$. 
\end{cor}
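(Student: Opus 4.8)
The plan is to apply Theorem~\ref{thm:optimalwss} directly, since the corollary's hypothesis is a special case that forces the Radon--Nikodym derivative to be an indicator function. First I would observe that the disjointness $\supp \mu_u \cap \supp \mu_w = \emptyset$ means that away from $\supp \mu_u$ the measure $\mu = \mu_u + \mu_w$ agrees with $\mu_w$, while on $\supp \mu_u$ (more precisely on a neighbourhood where $\mu_w$ gives no mass) it agrees with $\mu_u$. The aim is to show that the function $\chi_{\supp \mu_u}$ satisfies the defining relation \eqref{eq:RNconclusion}, namely $\mu_u = \chi_{\supp \mu_u} \, (\mu_u + \mu_w)$, and then invoke the $\mu$-a.e.\ uniqueness of the Radon--Nikodym derivative asserted in the theorem.

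To verify the relation I would test it against an arbitrary Borel set $A \in \cB(\rr d)$ using \eqref{eq:measRNformula}. The key computation is

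\begin{equation*}
\int_A \chi_{\supp \mu_u}(\xi) \, \dd \mu(\xi)
= \mu( A \cap \supp \mu_u )
= \mu_u(A \cap \supp \mu_u) + \mu_w(A \cap \supp \mu_u).
\end{equation*}

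Here the second term vanishes because $\mu_w$ is supported in $\supp \mu_w$, which is disjoint from $\supp \mu_u$, so $\mu_w(A \cap \supp \mu_u) = 0$. For the first term, $\mu_u$ assigns zero mass to the complement of its support, whence $\mu_u(A \cap \supp \mu_u) = \mu_u(A)$. Combining these gives $\int_A \chi_{\supp \mu_u} \, \dd \mu = \mu_u(A)$ for every $A \in \cB(\rr d)$, which is exactly \eqref{eq:measRNformula} with $\wh f = \chi_{\supp \mu_u}$. By the uniqueness $\mu$-a.e.\ of the Radon--Nikodym derivative established in Theorem~\ref{thm:optimalwss}, we conclude $\wh f = \chi_{\supp \mu_u}$ $\mu$-a.e.

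The statement $J(\fy) = 0$ then follows immediately from the error formula \eqref{eq:msegspwss}. Indeed, using the representation in terms of $\mu_w$, on $\supp \mu_u$ we have $\wh f = 1$, but $\mu_w$ vanishes on $\supp \mu_u$ by disjointness, so $\int \wh f \, |\wh \fy|^2 \, \dd \mu_w = 0$; equivalently, using the representation in terms of $\mu_u$, we have $1 - \wh f = 0$ on $\supp \mu_u$ and hence $\int (1 - \wh f) |\wh \fy|^2 \, \dd \mu_u = 0$. Either way $J(\fy) = 0$ for all $\fy \in \cF L^2(\mu)$. I do not anticipate a genuine obstacle here, as this is a clean corollary; the only point requiring a little care is the interaction between a measure and its support, namely that a non-negative Radon measure gives no mass to the complement of its closed support, which justifies $\mu_u(A \setminus \supp \mu_u) = 0$ and $\mu_w(\supp \mu_u) = 0$.
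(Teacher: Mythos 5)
Your proof is correct, but it takes a slightly different route from the paper's. The paper's own proof is a two-line deduction from facts already recorded in Theorem~\ref{thm:optimalwss}: it invokes the essential-support identity \eqref{eq:supportfhat}, $\esssupp \wh f = \supp \mu_u$, which together with the disjointness hypothesis gives $\esssupp \wh f \cap \supp \mu_w = \emptyset$; the vanishing $J(\fy)=0$ then falls out of \eqref{eq:msegspwss}, and the identity $\wh f = \chi_{\supp \mu_u}$ follows from \eqref{eq:RNconclusion}. You instead bypass \eqref{eq:supportfhat} entirely: you verify directly that the candidate $\chi_{\supp \mu_u}$ satisfies the defining Radon--Nikodym relation \eqref{eq:measRNformula} on every Borel set, using $\mu_w(\supp \mu_u) = 0$ and $\mu_u(A \setminus \supp \mu_u) = 0$, and then conclude by the $\mu$-a.e.\ uniqueness of the Radon--Nikodym derivative; the error formula \eqref{eq:msegspwss} then gives $J(\fy)=0$. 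Both arguments are sound and short. What your version buys is self-containedness: it only needs the uniqueness clause of the theorem and elementary measure-theoretic facts, not the support identity \eqref{eq:supportfhat} (whose proof in the paper requires \cite[Proposition~2.16]{Folland1} and an essential-support argument). What the paper's version buys is economy: having already paid for \eqref{eq:supportfhat} inside the theorem, the corollary is immediate. You were also right to flag the one genuinely delicate point, namely that a non-negative Radon measure on $\rr d$ assigns zero mass to the complement of its closed support; this holds here because $\rr d$ is second countable, so the complement of the support is a countable union of open null sets.
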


\begin{proof}
By \eqref{eq:supportfhat} we have $\esssupp \wh f = \supp \mu_u$
and \eqref{eq:msegspwss} then yields $J(\fy) = 0$ for all $\fy \in \cF L^2(\mu)$.
The claim $\wh f = \chi_{\supp \mu_u}$ follows from \eqref{eq:RNconclusion}
and $\esssupp \wh f  \cap  \supp \mu_w = \emptyset$.
\end{proof}

\begin{example}\label{ex:example12c}
Suppose as in Remark \ref{rem:WSSabscontmeas} that the measure $\mu_u$ is absolutely continuous with respect to Lebesgue measure on $\rr d$, 
that is $\dd \mu_u = f_u \dd \xi$ for some $f_u \in L_{{\rm loc}}^1(\rr d)$. 
If the GSP $w$ is white noise with power $p > 0$ then by Theorem \ref{thm:optimalwss}
and Example \ref{ex:example1b} the optimal filter is 
\begin{equation*}
\wh f (\xi) = \frac{f_u(\xi)}{f_u(\xi) + p}, \quad \xi \in \rr d.
\end{equation*}
If the GSP $w$ is white noise with power $p > 0$ differentiated to order $\alpha \in \nn d$ as in 
Example \ref{ex:example2b} then 
\begin{equation*}
\wh f (\xi) = \frac{f_u(\xi)}{ f_u(\xi) + p \xi^{2 \alpha}}. 
\end{equation*}
So if $f_u$ has slower growth that $\xi^{2 \alpha}$
then the optimal filter will attenuate high frequencies
(``low-pass filter" \cite{Vantrees1}). 
\end{example}

\subsection{The optimal error for WSS stochastic processes}
\label{subsec:optimalwsssp}

When the WSS GSPs $u$ and $w$ in Theorem \ref{thm:optimalwss}
can be identified with stochastic processes
we get an error that does not depend on a test function, under certain restrictions. 
As we will see the error is in fact constant with respect to $x \in \rr d$.

Let $u \in C(\rr d, L_0^2(\Omega) )$ be a continuous WSS stochastic process with covariance function $k_u (x,y) = \bE \left( u(x) \overline{ u (y) } \right) = \kappa_u(x-y)$, $x,y \in \rr d$. 
By Bochner's theorem $\mu_u = (2 \pi)^{\frac{d}{2}} \cF \kappa_u$ is a non-negative bounded Radon measure. 
Thus 
\begin{equation}\label{eq:wssfourier}
\kappa_u(x) = (2 \pi)^{- d} \int_{\rr d} e^{i \la x, \xi \ra} \, \dd \mu_u(\xi), \quad x \in \rr d, 
\end{equation}
and $\kappa_u \in (C \cap L^\infty)(\rr d)$.

Let $f \in \cF C_c^\infty(\rr d)$ and define the convolution
\begin{equation}\label{eq:convolutionwss1}
u_o(x) = (2 \pi)^{- \frac{d}{2}} \int_{\rr d} f(x-y) u(y) \, \dd y, \quad x \in \rr d. 
\end{equation}
Then 
\begin{equation}\label{eq:variancefiltered}
\begin{aligned}
\| u_o (x) \|_{L^2(\Omega)}^2 
& = (2 \pi)^{- d} \iint_{\rr {2d}} f(x-y) \overline{f(x-z)} \, \kappa_u(y-z) \, \dd y \, \dd z \\
& = (2 \pi)^{- d} (f * \kappa_u, f) 
= (2 \pi)^{- d}  \int_{\rr d} | \wh f (\xi) |^2 \dd \mu_u 
\end{aligned}
\end{equation}
for any $x \in \rr d$. 
Thus the convolution \eqref{eq:convolutionwss1} extends to a well defined stochastic process $u_o \in L^\infty(\rr d, L_0^2(\Omega) )$
when $f \in \cF L^\infty (\mu_u)$, 
and $\| u_o (x) \|_{L^2(\Omega)}$ does not depend on $x \in \rr d$. 
The filtered stochastic process $u_o$ is thus well defined for $f \in \cF L^\infty (\mu_u)$. 
Note that $\cF L^\infty (\mu_u) \subseteq \cF L^2 (\mu_u)$ since the measure $\mu_u$ is bounded. 
We have
\begin{align*}
& \| u_o (x+y) - u_o(x) \|_{L^2(\Omega)}^2 \\
& = (2 \pi)^{- d} \iint_{\rr {2d}} f(z) \overline{f(w)} \, \left( u(x+y-z) - u(x-z), u(x+y-w) - u(x-w) \right)_{L^2(\Omega)} \, \dd z \, \dd w \\
& = (2 \pi)^{- d} \iint_{\rr {2d}} f(z) \overline{f(w)} \, \Big( 2 \kappa_u(w-z) - \kappa_u(w-z+y) - \kappa_u(w-z-y) \Big) \, \dd z \, \dd w \\
& = (2 \pi)^{- d} \int_{\rr {d}} \Big( 2 f * \kappa_u (w) - f * \kappa_u (w+y) - f * \kappa_u  (w-y) \Big) \overline{f(w)} \, \dd w \\
& = (2 \pi)^{- d} \left( 2 \wh{f * \kappa_u} - M_{y} \wh{ f * \kappa_u} - M_{-y} \wh{f * \kappa_u}, \wh f \right)_{L^2} \\
& = (2 \pi)^{- d} \int_{\rr {d}} | \wh f (\xi) |^2 \, \left(2 - e^{i \la y, \xi \ra}  - e^{-i \la y, \xi \ra} \right) \, \dd \mu_u  \\
& = 2 (2 \pi)^{- d} \int_{\rr {d}} | \wh f (\xi) |^2 \, \left( 1 - \cos( \la y, \xi \ra ) \right) \, \dd \mu_u 
\end{align*}
so it follows from dominated convergence that $u_o \in C(\rr d, L_0^2(\Omega) )$ if $f \in \cF L^\infty (\mu_u)$. 

Let $v = u + w$ where $u,w$ are uncorrelated continuous stochastic processes. 
We pick $f \in \cF L^\infty (\mu_u + \mu_w)$ in \eqref{eq:convolutionwss1} in order to minimize the mean square error $\bE | u_o(x) - u(x)|^2$
where $u_o$ is defined by \eqref{eq:convolutionwss1} with $u$ replaced by $v$.
This is a particular case of Theorem \ref{thm:optimalwss}, and hence $\wh f \in L^\infty (\mu_u + \mu_w)$ is to be chosen as the Radon--Nikodym derivative, that by definition satisfies
\begin{equation}\label{eq:filterwssRN}
\mu_u = \wh f \, ( \mu_u + \mu_w). 
\end{equation}

Using \eqref{eq:wssfourier}, \eqref{eq:convolutionwss1}, \eqref{eq:variancefiltered} with $\mu_u$ replaced by $\mu_v = \mu_u + \mu_w$, 
\eqref{eq:filterwssRN}, $\kappa_u^* = \kappa_u$ and $\wh f \geqs 0$
we obtain the minimal mean square error as
\begin{align*}
\| u_o(x) - u(x) \|_{L^2(\Omega)}^2
& = \| u_o(x) \|_{L^2(\Omega)}^2 + \| u(x) \|_{L^2(\Omega)}^2 - 2 \, \re ( u_o(x), u(x) )_{L^2(\Omega)} \\
& = (2 \pi)^{- d}  \int_{\rr d} | \wh f (\xi) |^2 (\dd \mu_u + \dd \mu_w) + \kappa_u(0) - 2 (2 \pi)^{- \frac{d}{2}} \re (f, \kappa_u)_{L^2} \\
& = (2 \pi)^{- d}  \int_{\rr d} \left( \wh f (\xi) + 1 - 2 \wh f (\xi) \right) \dd \mu_u  \\
& = (2 \pi)^{- d}  \int_{\rr d} \left( 1 - \wh f (\xi) \right) \dd \mu_u 
= (2 \pi)^{- d}  \int_{\rr d} \wh f (\xi) \, \dd \mu_w 
\end{align*}
independently of $x \in \rr d$, as claimed.

\begin{rem}\label{rem:wssminmse}
Writing 
\begin{equation*}
J = 2 (2 \pi)^{d} \| u_o(x) - u(x) \|_{L^2(\Omega)}^2
= \int_{\rr d} \left( 1 - \wh f (\xi) \right) \dd \mu_u 
+ \int_{\rr d} \wh f (\xi) \, \dd \mu_w 
= J_1 + J_2
\end{equation*}
we split the total error as $J = J_1 + J_2$ where the bargain of the optimal filter $\wh f$ becomes visible.  
In fact, recalling $0 \leqs \wh f(\xi) \leqs 1$, $\wh f$ should be close to 1 at the frequencies where the signal in $\mu_u$ 
is strong in order to minimize $J_1$. 
On the other hand $\wh f$ should be close to zero at the frequencies where the noise in $\mu_w$ is strong in order to attenuate $J_2$. 
The compromise may give a small error if $\supp \mu_u \cap \supp \mu_w$ is small.
\end{rem}

\begin{rem}\label{rem:msegspwss}
For a WSS generalized stochastic process the minimal error depends on the test function $\fy \in \cF L^2(\mu)$ according to 
\eqref{eq:msegspwss}. 
Also in this case we may split the minimal error as
\begin{equation*}
2 J(\fy)
= \int_{\rr d} \left( 1 - \wh f (\xi) \right) |\wh \fy (\xi)|^2  \, \dd \mu_u
+ \int_{\rr d} \wh f (\xi) \, |\wh \fy (\xi)|^2 \, \dd \mu_w.
\end{equation*}
This admits a similar interpretation as for a stochastic process (Remark \ref{rem:wssminmse}), with the modification of the test function frequency weight $|\wh \fy (\xi)|^2$
in the integrals. 
\end{rem}

\section{Optimal filtering of non-stationary generalized stochastic processes}
\label{sec:optfiltnonstat}

Let $u,w  \in \cL ( C_c^\infty(\rr d), L_0^2(\Omega) )$ be uncorrelated generalized stochastic processes. 
In this section we do not assume that $u$ and $w$ are WSS.
We return to the model \eqref{eq:signalplusnoise} and the corresponding non-stationary operator equation \eqref{eq:operatoreq2}
\begin{equation}\label{eq:operatoreq3}
\cK_u = F (\cK_u + \cK_w)
\end{equation}
in the space of continuous linear operators $C_c^\infty(\rr d) \to \cD'(\rr d)$. 
Recall that $\cK_u \geqs 0$, $\cK_w \geqs 0$, and that $F$ is assumed to be a continuous linear operator on $C_c^\infty (\rr d)$
which extends uniquely to a continuous linear operator on $\cD'(\rr d)$.

The equation \eqref{eq:operatoreq3} when $u$ or $w$ or both are not WSS 
is more cumbersome to handle than the case when both are WSS. 
In the latter case we could in Section \ref{sec:optimalwss} for a given WSS $u  \in \cL ( \cS(\rr d), L_0^2(\Omega) )$ choose a Hilbert subspace $\cF L^2(\mu_u) \subseteq \cS'(\rr d)$ where $\mu_u \in \cM_+ (\rr d)$ is the spectral measure corresponding to $u$,
such that the covariance operator $\cK_u$ extends to the identity operator on $\cF L^2(\mu_u)$. 
Note that the space $\cF L^2(\mu_u)$ is adapted to the specific $u  \in \cL ( \cS(\rr d), L_0^2(\Omega) )$. 
In the non-stationary case, without further restrictions, there is no natural space on which a covariance operator is continuous. 

The covariance operators $\cK$ for WSS $\gsp$s are translation invariant as $\cK_u T_x = T_x \cK_u$ for all $x \in \rr d$, that is they are convolution operators. Moreover the space $\cF L^2(\mu_u)$ is also translation invariant (more precisely translation isometric).

\subsection{Commuting covariance operators}\label{subsec:commuting}

In this subsection we start from the observation that the operators $\cK_u$ and $\cK_w$ commute
when $u$ and $w$ are WSS $\gsp$s, since $\cK_u$ and $\cK_w$ are convolution operators then. 

We let $u,w  \in \cL ( \cS(\rr d), L_0^2(\Omega) )$, assume they are uncorrelated as before, allow them to be non-stationary, but we keep the restriction that $\cK_u$ and $\cK_w$ commute. 
We also assume that $\cK_u, \cK_w \in \cL( \cH )$, $\cK_u \geqs 0$ and $\cK_w \geqs 0$ 
where $\cH$ is a Hilbert space of distributions defined on $\rr d$. 
This framework is realized e.g. with modulation spaces if $\cH = M_{t,s}^2(\rr d)$
and the covariance operators are Weyl pseudodifferential operators with 
symbols in $M_{0,0, |s|, |t|}^{\infty,1}(\rr {2d})$ for any $t,s \in \ro$
as we discussed in 
Section \ref{subsec:modspace}, cf. \eqref{eq:contpsdomodsp2}. 

Then we may use the powerful methods that are based on the spectral theorem. 
The spectral theorem (cf. \cite[Theorem~IX  \S 2.2.2]{Conway1} and \cite[Theorem~9.14]{vanNeerven1}) concerns a normal operator $N \in \cL( \cH )$
which by definition satisfies $N^* N = N N^*$. 
For such an operator there exists a measure $\Pi$ 
with domain $\cB( \sigma (N) )$ where $\sigma(N) \Subset \co$ denotes the spectrum of $N$, 
and values in the projections in $\cL( \cH )$,
such that $N$ can be spectrally decomposed as 
\begin{equation*}
N = \int_{\sigma(N)} z \dd \Pi (z). 
\end{equation*}
By \cite[Theorem~IX  \S 2.2.3]{Conway1} or \cite[Theorem~9.8]{vanNeerven1} there exists a bounded functional calculus 
defined by
\begin{equation*}
f(N) = \int_{\sigma(N)} f(z) \dd \Pi (z) \in \cL( \cH )
\end{equation*}
for $f \in L^\infty( \sigma(N), \Pi )$ (cf. \cite[Proposition~9.11]{vanNeerven1}) which is a $^*$-homomorphism and an isomorphism. 
If $N$ is selfadjoint then $\supp \Pi \subseteq \ro$ and if $N \geqs 0$ then $\supp \Pi \subseteq \ro_+$ (cf. \cite[Corollary~9.18]{vanNeerven1}). 

We use von Neumann's result \cite[Theorem~9.24]{vanNeerven1}
which depends on the spectral theorem for normal operators on a Hilbert space. 
This theorem concerns in fact the more general assumption of two commuting self-adjoint operators. 
Adapted to our setup of commuting non-negative operators, the theorem and 
its proof says the following. 
There exists a measure $\Pi$ with domain $\cB(\co)$, compactly supported in the first quadrant, i.e. 
$\supp \Pi \Subset \{x+iy \in \co:  x \geqs 0, \ y \geqs 0 \} := \co_+$, 
which is projection-valued in $\cL( \cH )$, 
and there exists a normal operator $N \in \cL( \cH )$ with spectral decomposition 
\begin{equation*}
N = \int_{\co_+} z \dd \Pi (z). 
\end{equation*}
Moreover there exist two functions $f_u, f_w \in C_c( \co_+ )$
such that $f_u, f_w \geqs 0$, and $\cK_u$ and $\cK_w$ can
be simultaneously spectrally decomposed with respect to $\Pi$ as  
\begin{align}
\cK_u & = \int_{\co_+} f_u(z) \dd \Pi (z), \label{eq:Ku} \\
\cK_w & = \int_{\co_+} f_w(z) \dd \Pi (z). \label{eq:Kw}
\end{align}

If we set
\begin{equation}\label{eq:optspectralfunc}
f(z) = \frac{f_u(z)}{f_u(z) + f_w(z)} \, \chi_{\supp f_u}
\end{equation}
then $f: \co_+ \to \ro_+$ is a well defined bounded measureable function.  
The functional calculus of the operator $N$ hence yields that 
\begin{equation}\label{eq:optimalcommute}
F = \int_{\co_+} f(z) \dd \Pi (z) \in \cL( \cH )
\end{equation}
is an operator that solves \eqref{eq:operatoreq3}.
The solution $F$ is unique in the bounded functional calculus of $N$. 
The fact that $f \geqs 0$ implies by \cite[Theorem~9.8]{vanNeerven1}
that $F$ is non-negative. 

We have shown the following result. 

\begin{thm}\label{thm:commutesol}
Let $u,w  \in \cL ( \cS(\rr d), L_0^2(\Omega) )$ be uncorrelated. 
Suppose that the covariance operators satisfy $\cK_u, \cK_w \in \cL( \cH )$ 
where $\cH$ is a Hilbert space of distributions defined on $\rr d$, 
$\cK_u \geqs 0$, $\cK_w \geqs 0$ and $\cK_u \cK_w = \cK_w \cK_u$.

Then there exists a Borel measure $\Pi$, compactly supported on $\co_+$, with values in the projections in $\cL( \cH )$, 
such that $\cK_u$ and $\cK_w$ can be expressed as \eqref{eq:Ku} and \eqref{eq:Kw} respectively, 
with non-negative $f_u, f_w \in C_c( \co_+ )$, 
and the solution to \eqref{eq:operatoreq3} is given by \eqref{eq:optspectralfunc} and \eqref{eq:optimalcommute}. 
The solution satisfies $F \geqs 0$ and is unique in the bounded functional calculus of $\Pi$. 
\end{thm}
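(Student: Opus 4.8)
The plan is to reduce the operator equation \eqref{eq:operatoreq3} to a scalar pointwise identity by means of a simultaneous spectral decomposition of the two commuting non-negative operators. Since $\cK_u$ and $\cK_w$ are self-adjoint (being non-negative) and commute, the operator $N = \cK_u + i \cK_w \in \cL(\cH)$ is normal, with $N^* N = N N^* = \cK_u^2 + \cK_w^2$. The spectral theorem then furnishes a projection-valued Borel measure $\Pi$ on $\sigma(N) \Subset \co$ with $N = \int z \, \dd \Pi(z)$; this is precisely what von Neumann's theorem \cite{vanNeerven1} yields for the commuting pair $\cK_u, \cK_w$. Taking real and imaginary parts gives $\cK_u = \re N = \int (\re z) \, \dd \Pi(z)$ and $\cK_w = \im N = \int (\im z) \, \dd \Pi(z)$, so that \eqref{eq:Ku} and \eqref{eq:Kw} hold with the coordinate functions $f_u(z) = \re z$ and $f_w(z) = \im z$. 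Because $\cK_u, \cK_w \geqs 0$, the spectrum is confined to the closed first quadrant, $\supp \Pi \Subset \co_+$, and since only the values of $f_u, f_w$ on the compact set $\supp \Pi$ enter the calculus they may be taken in $C_c(\co_+)$, where they are non-negative.

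Next I would check that the function $f$ in \eqref{eq:optspectralfunc} is a legitimate symbol for the functional calculus. On the set $\{ f_u + f_w > 0 \}$ the quotient $f_u/(f_u+f_w)$ is a genuine continuous fraction with values in $[0,1]$; on its complement one has $f_u + f_w = 0$, hence $f_u = 0$, and there $f$ is assigned the value $0$ (this resolves the $0/0$ ambiguity, consistently with the factor $\chi_{\supp f_u}$). Thus $f: \co_+ \to \ro_+$ is bounded and Borel measurable with $0 \leqs f \leqs 1$, and $F = \int f(z) \, \dd \Pi(z) = f(N) \in \cL(\cH)$ is well defined by \eqref{eq:optimalcommute}.

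The verification of \eqref{eq:operatoreq3} then rests on the functional calculus being a $^*$-homomorphism, indeed an isometric $^*$-isomorphism onto the commutative von Neumann algebra generated by $N$ \cite{vanNeerven1}. Applying it to the product gives $F(\cK_u + \cK_w) = f(N)\,(f_u+f_w)(N) = \big( f \cdot (f_u + f_w) \big)(N)$, and the scalar identity $f \cdot (f_u + f_w) = f_u$ holds $\Pi$-a.e.\ --- by construction on $\{ f_u + f_w > 0 \}$, and because both sides vanish on the complement. Hence $F(\cK_u + \cK_w) = f_u(N) = \cK_u$. Non-negativity $F \geqs 0$ is then immediate from $f \geqs 0$ together with the positivity of the calculus \cite{vanNeerven1}.

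The delicate point, and the step I expect to require the most care, is the precise sense of uniqueness. Any solution lying in the functional calculus, say $G = g(N)$, must satisfy $g \cdot (f_u + f_w) = f_u$ in $L^\infty(\Pi)$ by injectivity of the calculus, and this forces $g = f$ $\Pi$-a.e.\ on the essential support $\{ f_u + f_w > 0 \}$ of $\cK_u + \cK_w$. On the complementary spectral projection, however, both $\cK_u$ and $\cK_u + \cK_w$ act as zero, so the equation imposes no constraint there and any functional-calculus operator solves \eqref{eq:operatoreq3}; the factor $\chi_{\supp f_u}$ in \eqref{eq:optspectralfunc} selects the canonical ($f = 0$) representative. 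With this convention $F$ is the unique solution within the bounded functional calculus of $\Pi$, which completes the proof.
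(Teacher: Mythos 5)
Your proof is correct and follows essentially the same route as the paper: where you construct the normal operator $N = \cK_u + i \cK_w$ and its spectral measure explicitly, the paper simply invokes von Neumann's theorem \cite[Theorem~9.24]{vanNeerven1} to obtain the simultaneous decomposition \eqref{eq:Ku}--\eqref{eq:Kw}, and the remaining steps (the definition \eqref{eq:optspectralfunc}, multiplicativity of the bounded functional calculus to verify \eqref{eq:operatoreq3}, and $F \geqs 0$ from $f \geqs 0$) coincide. Your uniqueness discussion is in fact slightly sharper than the paper's: the paper asserts uniqueness in the bounded functional calculus outright, whereas you correctly observe that the equation imposes no constraint on the spectral projection corresponding to $\{ f_u + f_w = 0 \}$, so uniqueness holds only after fixing the normalization $f = 0$ there, which is what the factor $\chi_{\supp f_u}$ encodes.
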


\subsection{Pseudodifferential operators}\label{subsec:pseudodiff}

In this subsection we relax both the conditions that $u,w$ are WSS and that $\cK_u$ and $\cK_w$
commute. 
The covariance operators are allowed to be pseudodifferential operators rather than convolution operators as in Section \ref{sec:optimalwss}.
We use the functional framework of Section \ref{subsec:modspace}, that is operators with symbols in modulation spaces, 
acting on modulation spaces. 
In an effort to extend the analysis in Section \ref{sec:optimalwss} we try to choose a framework that contains the WSS case as far as possible.

First we deduce an inclusion of modulation spaces into the spaces on which WSS covariance operators act in Section \ref{sec:optimalwss}. 
This gives a relation between spaces used in Section \ref{sec:optimalwss} and spaces used in this section. 

\begin{prop}\label{prop:modspinclusions}
Let $1 \leqs p,q < \infty$ and let $\mu \in \cM_+(\rr d)$ 
with $s \geqs 0$ chosen such that \eqref{eq:temperedmeasure} is satisfied. 
If $\ep > 0$ and 
\begin{equation*}
\omega (x,\xi) = \eabs{x}^{\frac{1}{p'}(d + \ep)} \eabs{\xi}^{ \frac{s}{2} + \frac{1}{q'}(d + \ep)}
\end{equation*}
then 
\begin{equation}\label{eq:modspembeddings}
M_\omega^{p,q}(\rr d) \subseteq \cF L^2 (\mu). 
\end{equation}
\end{prop}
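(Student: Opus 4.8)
The plan is to reduce the statement to a weighted $L^\infty$ bound on the Fourier side and then invoke temperedness of $\mu$. Concretely, I will prove that every $u \in M_\omega^{p,q}(\rr d)$ satisfies $\wh u \, \eabs{\cdot}^{s/2} \in L^\infty(\rr d)$ with $\sup_{\xi \in \rr d} |\wh u(\xi)| \eabs{\xi}^{s/2} \lesssim \| u \|_{M_\omega^{p,q}}$. Granting this, \eqref{eq:temperedmeasure} gives
\[
\int_{\rr d} |\wh u(\xi)|^2 \, \dd \mu(\xi) \leqs \Big( \sup_{\xi \in \rr d} |\wh u(\xi)| \eabs{\xi}^{s/2} \Big)^2 \int_{\rr d} \eabs{\xi}^{-s} \, \dd \mu(\xi) < \infty,
\]
which is exactly $u \in \cF L^2(\mu)$ together with continuity of the inclusion; equivalently this realizes $M_\omega^{p,q} \subseteq H_{s/2}^\infty(\mu) \subseteq \cF L^2(\mu)$. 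Since $\cS(\rr d)$ is dense in $M_\omega^{p,q}(\rr d)$ for $p,q < \infty$, it suffices to establish the a priori estimate for $u \in \cS(\rr d)$ and extend by continuity using the continuous inclusions into $\cS'(\rr d)$.

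For the core estimate I fix $\fy \in \cS(\rr d)$ with $\| \fy \|_{L^2} = 1$ (modulation norms are independent of the window) and apply $\cF$ to the inversion formula \eqref{eq:STFTinverse}. Using $\cF(M_\eta T_y \fy)(\xi) = e^{- i \la y, \xi - \eta \ra} \wh \fy(\xi - \eta)$ this yields the pointwise majorant
\[
|\wh u(\xi)| \leqs (2 \pi)^{- d/2} \iint_{\rr {2d}} |V_\fy u(y,\eta)| \, |\wh \fy(\xi - \eta)| \, \dd y \, \dd \eta .
\]
I then multiply by $\eabs{\xi}^{s/2}$ and split the weight by Peetre's inequality \eqref{eq:Peetre} as $\eabs{\xi}^{s/2} \lesssim \eabs{\eta}^{s/2} \eabs{\xi - \eta}^{s/2}$, valid since $s \geqs 0$. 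Because $\wh \fy \in \cS(\rr d)$, the factor $\eabs{\xi - \eta}^{s/2} |\wh \fy(\xi - \eta)|$ is bounded uniformly in its argument, so the $\xi$ dependence disappears and I obtain the uniform bound $\sup_{\xi} |\wh u(\xi)| \eabs{\xi}^{s/2} \lesssim \iint_{\rr {2d}} |V_\fy u(y,\eta)| \eabs{\eta}^{s/2} \, \dd y \, \dd \eta$.

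It remains to dominate this double integral by $\| u \|_{M_\omega^{p,q}}$, which is where the precise exponents and $\ep > 0$ enter, and I will integrate first in $y$ and then in $\eta$. In $y$, Hölder with exponents $p, p'$ against the weight $\eabs{y}^{\frac{1}{p'}(d+\ep)}$ gives $\int |V_\fy u(y,\eta)| \, \dd y \lesssim \big( \int |V_\fy u(y,\eta)|^p \eabs{y}^{\frac{p}{p'}(d+\ep)} \dd y \big)^{1/p}$, the companion factor $\| \eabs{\cdot}^{-\frac{1}{p'}(d+\ep)} \|_{L^{p'}}$ being finite precisely because $\frac{1}{p'}(d+\ep) \cdot p' = d + \ep > d$. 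In $\eta$, Hölder with exponents $q, q'$ applied to the remaining weight written as $\eabs{\eta}^{s/2} = \eabs{\eta}^{c} \eabs{\eta}^{s/2 - c}$ with $c = \tfrac{s}{2} + \tfrac{1}{q'}(d+\ep)$ reproduces exactly $\| u \|_{M_\omega^{p,q}}$ (cf. \eqref{eq:mospnorm}) times $\| \eabs{\cdot}^{s/2 - c} \|_{L^{q'}}$, and the latter is finite because $(c - s/2) q' = d + \ep > d$. The boundary cases $p = 1$ or $q = 1$, where $p'$ or $q' = \infty$ and the corresponding weight exponent vanishes, are handled directly without the respective Hölder step.

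I expect the main obstacle to be the bookkeeping in the two Hölder steps, ensuring the powers of $\eabs{y}$ and $\eabs{\eta}$ land on the correct side and reproduce the modulation norm \eqref{eq:mospnorm} exactly: the role of $\ep > 0$ is precisely to push both auxiliary exponents $\tfrac{1}{p'}(d+\ep) \cdot p'$ and $(c - s/2) q'$ strictly above $d$, the borderline for integrability of $\eabs{\cdot}^{-(d+\ep)}$ on $\rr d$. A minor technical point is the interchange of $\cF$ with the inversion integral, which is immediate on the dense subspace $\cS(\rr d)$ where $V_\fy u \in \cS(\rr {2d})$, and is transported to general $u$ by the density and continuity argument above.
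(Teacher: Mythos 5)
Your proposal is correct and follows essentially the same route as the paper's proof: both apply the Fourier transform to the STFT inversion formula \eqref{eq:STFTinverse}, use Peetre's inequality \eqref{eq:Peetre} together with the rapid decay of $\wh \fy$ to reduce matters to the uniform bound $\sup_{\xi} \eabs{\xi}^{s/2} |\wh u(\xi)| \lesssim \| u \|_{M_\omega^{p,q}}$ via H\"older's inequality in $x$ and then in $\xi$ against the integrable weights $\eabs{\cdot}^{-(d+\ep)}$, and both conclude with \eqref{eq:temperedmeasure} and the density of $\cS(\rr d)$ in $M_\omega^{p,q}(\rr d)$ for $p,q < \infty$. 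The only cosmetic differences are your explicit handling of the boundary cases $p=1$ or $q=1$ and the order in which you split the weight, neither of which changes the argument.
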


\begin{proof}
Let $f \in \cS(\rr d)$
and let $\fy \in \cS(\rr d)$ satisfy $\| \fy \|_{L^2} = 1$. 
We use \eqref{eq:STFTinverse} which gives
\begin{equation*}
\wh f = (2\pi )^{-\frac d2} \iint_{\rr {2d}} V_\fy f (x,\xi) T_\xi M_{-x} \wh \fy \, \dd x \, \dd \xi.
\end{equation*}

Using \eqref{eq:Peetre} and H\"older's inequality we obtain for all $\eta \in \rr d$
\begin{align*}
\eabs{\eta}^{\frac{s}{2}} |\wh f (\eta) | 
& \lesssim \iint_{\rr {2d}} |V_\fy f (x,\xi)| \, |\wh \fy (\eta - \xi) | \eabs{\eta}^{\frac{s}{2}} \, \dd x \, \dd \xi
\lesssim \iint_{\rr {2d}} |V_\fy f (x,\xi)| \eabs{\xi}^{\frac{s}{2}} \, \dd x \, \dd \xi \\
& = \iint_{\rr {2d}} |V_\fy f (x,\xi)| \, \omega(x,\xi) \, \eabs{x}^{- \frac{1}{p'}(d + \ep)} \eabs{\xi}^{- \frac{1}{q'}(d + \ep)} \dd x \, \dd \xi \\
& \lesssim \int_{\rr d} \| V_\fy f (\cdot,\xi) \, \omega(\cdot,\xi) \|_{L^p(\rr d)} \eabs{\xi}^{- \frac{1}{q'}(d + \ep)}  \dd \xi \\
& \lesssim \| (V_\fy f ) \omega \|_{L^{p,q}(\rr {2d})} 
= \| f \|_{M_\omega^{p,q}(\rr d)}. 
\end{align*}
From \eqref{eq:temperedmeasure} it hence follows that
\begin{equation}\label{eq:measFL2mod}
\| f \|_{\cF L^2 (\mu)}
= \left( \int_{\rr d} | \wh f(\eta) |^2 \dd \mu(\eta) \right)^{\frac12}
\lesssim \sup_{\eta \in \rr d} \eabs{\eta}^{\frac{s}{2}} | \wh f(\eta) |
\lesssim \| f \|_{M_\omega^{p,q}(\rr d)}.
\end{equation}
The inclusion \eqref{eq:modspembeddings} now follows from the density of $\cS(\rr d)$ in $M_\omega^{p,q}(\rr d)$ \cite[Proposition~11.3.4]{Grochenig1}.
\end{proof}

\begin{rem}\label{rem:translationinvariance}
The space $\cF L^2 (\mu)$ is translation invariant, and in fact also translation isometric. 
The modulation spaces $M_\omega^{p,q}$ are translation invariant but in general not translation isometric
\cite{Grochenig1}.
\end{rem}

\begin{rem}\label{rem:oppositeinclusion}
From \eqref{eq:STFTtempered} it follows
\begin{equation}\label{eq:tempdismodsp}
\cS'(\rr d) = \bigcup_{t \in \ro} M_{\omega_t}^{p,q}(\rr d)
\end{equation}
for any $p,q \in [1,\infty]$ with $\omega_t(X) = \eabs{X}^t$ and $X \in \rr {2d}$. 
From Section \ref{subsec:measures} we know that the inclusions
\begin{equation}\label{eq:inclusionsSSp}
\cS(\rr d) \subseteq \cF L^2 (\mu) \subseteq \cS'(\rr d)
\end{equation}
are continuous and dense but the left inclusion is in general not injective if $\mu \in \cM_+(\rr d)$. 
Combining \eqref{eq:tempdismodsp} and \eqref{eq:inclusionsSSp} we obtain the injective inclusion
\begin{equation*}
\cF L^2 (\mu) \subseteq \bigcup_{t \in \ro} M_{\omega_t}^{p,q}(\rr d)
\end{equation*}
which may be seen as an upper inclusion opposite to \eqref{eq:modspembeddings}.

However we cannot get continuous inclusions in individual modulation spaces of the form 
$\cF L^2 (\mu) \subseteq M_{\omega}^{p,q}(\rr d)$ in general, that is an estimate of the form
\begin{equation*}
\| f \|_{M_{\omega}^{p,q}}  \lesssim \| f \|_{\cF L^2 (\mu)}.
\end{equation*}
Indeed suppose $\supp \mu \Subset \rr d$, take $f \in \cS(\rr d) \setminus \{ 0 \}$ 
such that $\wh f = 0$ in $L^2 (\mu)$. Then $f \neq 0$ in $M_{\omega}^{p,q}(\rr d)$
for any $p,q \in [1,\infty]$ and any $\omega \in \mascP(\rr d)$.
\end{rem}

The covariance operator $\cK_u$ of a WSS $\gsp$ $u$ can be seen as a convolution operator as in \eqref{eq:WSSconvop}
and \eqref{eq:WSSconvopcont}, where $\kappa_u = (2 \pi)^{- \frac{d}{2} } \cF^{-1} \mu_u \in \cS'(\rr d)$
and $\mu_u \in \cM_+(\rr d)$.
From \eqref{eq:weylquantization} we may identify the Weyl symbol of $\cK_u = a_u^w(x,D)$ as $a_u = 1 \otimes \mu_u \in \cM_+(\rr {2d})$. 
Since we want to include the WSS case we need to work with pseudodifferential operators with symbols that are not smooth, as opposed to the generic frameworks for pseudodifferential operators when they are applied to PDEs \cite{Folland1,Hormander1,Shubin1}. In fact we need spaces of non-regular symbols, including elements of the form $1 \otimes \mu \in \cM_+(\rr {2d})$ with $\mu \in \cM_+(\rr d)$.
The next result shows that $1 \otimes \mu$ belongs to a modulation space when $\mu \in \cM_+(\rr d)$. 

\begin{prop}\label{prop:posmeasSTFT}
Suppose $\mu \in \cM_+(\rr d)$ and let $s \geqs 0$ be chosen so that 
\eqref{eq:temperedmeasure} is fulfilled. Then for any $n \in \no$ and any $\ep > 0$ we have $1 \otimes \mu \in M_{\omega}^{\infty,1}(\rr {2d})$
with 
\begin{equation}\label{eq:weightWSS1}
\omega(x_1, x_2, \xi_1, \xi_2) = \eabs{x_2}^{-s} \eabs{\xi_1}^n \eabs{\xi_2}^{-d - \ep}, \quad x_1, x_2, \xi_1, \xi_2 \in \rr d.
\end{equation}
\end{prop}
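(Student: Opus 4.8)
The plan is to estimate the $M_\omega^{\infty,1}(\rr {2d})$ norm directly, using a window that factorizes as a tensor product in order to exploit the product structure of the symbol $1 \otimes \mu$. Since the modulation space norm is independent of the choice of window $\Psi \in \cS(\rr {2d}) \setminus \{ 0 \}$, and since $\omega \in \mascP(\rr {4d})$ because it is a product of bracket powers (polynomially moderate by \eqref{eq:Peetre}), I would fix $\Psi = \fy_1 \otimes \fy_2$ with $\fy_1, \fy_2 \in \cS(\rr d) \setminus \{ 0 \}$. Writing the phase-space variables as $X = (x_1, x_2)$ and $\Xi = (\xi_1, \xi_2)$ and using $M_\Xi T_X \Psi = (M_{\xi_1} T_{x_1} \fy_1) \otimes (M_{\xi_2} T_{x_2} \fy_2)$, the STFT of the tensor product factorizes:
\begin{equation*}
V_\Psi(1 \otimes \mu)(x_1, x_2, \xi_1, \xi_2) = V_{\fy_1} 1 (x_1, \xi_1) \, V_{\fy_2} \mu (x_2, \xi_2).
\end{equation*}

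Next I would analyze the two factors separately. For the first, $V_{\fy_1} 1 (x_1, \xi_1) = \cF(T_{x_1} \overline{\fy_1})(\xi_1) = e^{-i \la x_1, \xi_1 \ra} \overline{\wh \fy_1(-\xi_1)}$, so its modulus $|\wh \fy_1(-\xi_1)|$ is independent of $x_1$ and rapidly decreasing in $\xi_1$; in particular $\int_{\rr d} |V_{\fy_1} 1(x_1, \xi_1)| \, \eabs{\xi_1}^n \, \dd \xi_1 < \infty$ for every $n \in \no$. For the second factor I would use $V_{\fy_2} \mu (x_2, \xi_2) = (2\pi)^{-d/2} \int_{\rr d} e^{-i \la y, \xi_2 \ra} \overline{\fy_2(y - x_2)} \, \dd \mu(y)$, bound the oscillatory factor by one, apply the Schwartz decay $|\fy_2(y - x_2)| \lesssim \eabs{y - x_2}^{-s}$ together with Peetre's inequality \eqref{eq:Peetre} in the form $\eabs{y - x_2}^{-s} \lesssim \eabs{y}^{-s} \eabs{x_2}^{s}$, and then invoke the temperedness \eqref{eq:temperedmeasure}. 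This gives
\begin{equation*}
\sup_{\xi_2 \in \rr d} |V_{\fy_2} \mu (x_2, \xi_2)| \lesssim \eabs{x_2}^{s} \int_{\rr d} \eabs{y}^{-s} \, \dd \mu(y) \lesssim \eabs{x_2}^{s},
\end{equation*}
a bound that is \emph{uniform} in $\xi_2$, hence $\sup_{x_2} |V_{\fy_2} \mu(x_2, \xi_2)| \, \eabs{x_2}^{-s} \lesssim 1$ uniformly in $\xi_2$.

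Finally I would assemble the norm. Because the first factor is independent of $x_1$, the supremum over $(x_1, x_2)$ splits and the $M_\omega^{\infty,1}$ norm factorizes into
\begin{equation*}
\left( \int_{\rr d} |\wh \fy_1(-\xi_1)| \, \eabs{\xi_1}^n \, \dd \xi_1 \right) \left( \int_{\rr d} \Big( \sup_{x_2} |V_{\fy_2} \mu(x_2, \xi_2)| \, \eabs{x_2}^{-s} \Big) \eabs{\xi_2}^{-d-\ep} \, \dd \xi_2 \right).
\end{equation*}
The first integral is finite by the Schwartz decay of $\wh \fy_1$, and the second is bounded by a constant times $\int_{\rr d} \eabs{\xi_2}^{-d-\ep} \, \dd \xi_2$, which converges precisely because $\ep > 0$ pushes the exponent below $-d$. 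The step I expect to require the most care is the analysis of the second factor: one must verify that the pointwise bound on $V_{\fy_2} \mu$ is genuinely uniform in $\xi_2$, so that its $x_2$-growth is exactly absorbed by the weight $\eabs{x_2}^{-s}$ of \eqref{eq:weightWSS1}, and that the Schwartz decay exponent is matched to the temperedness exponent $s$ of $\mu$. The harmless role of $\eabs{\xi_1}^n$ (absorbed by the Schwartz tail of $\wh \fy_1$) and the decisive role of the decaying weight $\eabs{\xi_2}^{-d-\ep}$ (which renders the $\xi_2$-integral convergent) are what allow the conclusion to hold for arbitrary $n \in \no$ and any $\ep > 0$.
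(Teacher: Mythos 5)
Your proposal is correct and takes essentially the same route as the paper's proof: a tensor-product window, factorization of the STFT, the bound $\sup_{\xi_2}|V_{\fy_2}\mu(x_2,\xi_2)| \lesssim \eabs{x_2}^{s}$ obtained from Peetre's inequality \eqref{eq:Peetre} and the temperedness \eqref{eq:temperedmeasure}, and convergence of the $\Xi$-integral from the Schwartz decay of $\wh \fy_1$ together with the exponent $-d-\ep$. The only cosmetic differences are that the paper uses the same window in both factors ($\fy \otimes \fy$) and absorbs the weight $\eabs{\xi_1}^n$ by first bounding $|\wh \fy(-\xi_1)| \lesssim \eabs{\xi_1}^{-n-d-\ep}$, rather than keeping $|\wh \fy_1(-\xi_1)|\eabs{\xi_1}^n$ and integrating it directly.
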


\begin{proof}
Let $\fy \in \cS(\rr d) \setminus 0$. We have 
\begin{equation*}
V_{\fy \otimes \fy}( 1 \otimes \mu)(x_1, x_2, \xi_1, \xi_2) = V_\fy 1(x_1, \xi_1) V_\fy \mu (x_2, \xi_2)
\end{equation*}
and $V_\fy 1(x_1, \xi_1) = e^{- i \la x_1, \xi_1 \ra} \overline{\wh \fy(-\xi_1)}$.
Using \eqref{eq:Peetre} and \eqref{eq:temperedmeasure} this gives for any $n \in \no$ and any $\ep > 0$
\begin{align*}
\left| V_{\fy \otimes \fy}( 1 \otimes \mu)(x_1, x_2, \xi_1, \xi_2) \right|
& = (2 \pi)^{-\frac{d}{2}} | \wh \fy(-\xi_1) | \left| \int_{\rr d} e^{- i \la \xi_2, y_2 \ra} \overline{\fy(y_2 - x_2)} \dd \mu(y_2) \right| \\
& \lesssim \eabs{\xi_1}^{-n - d - \ep} \int_{\rr d} \eabs{y_2 - x_2}^{-s} \dd \mu(y_2) \\
& \lesssim \eabs{\xi_1}^{-n - d - \ep} \eabs{x_2}^{s}.
\end{align*}
It follows that $\left| \big( V_{\fy \otimes \fy}( 1 \otimes \mu) \omega \big) (x_1, x_2, \xi_1, \xi_2 ) \right| \lesssim  \eabs{\xi_1}^{- d - \ep} \eabs{\xi_2}^{- d - \ep}$ which implies
\begin{equation*}
\| 1 \otimes \mu \|_{M_{\omega}^{\infty,1}(\rr {2d})} 
= \int_{\rr {2d}}\sup_{X \in \rr {2d} }  \Big( \left| \big( V_{\fy \otimes \fy}( 1 \otimes \mu) \omega \big) (X,\Xi) \right| \Big) \dd \Xi
< \infty.
\end{equation*}
\end{proof}

With the notation of Section \ref{subsec:modspace}, Proposition \ref{prop:posmeasSTFT} says that 
$1 \otimes \mu \in M_{0, - s, n, - d - \ep}^{\infty,1}(\rr {2d})$ for any $n \in \no$, any $\ep > 0$, and some $s \geqs 0$. 
From \eqref{eq:weightpowers} and the preceding discussion we get the following consequence. 

\begin{cor}\label{cor:contmodspWSS}
Suppose $\mu \in \cM_+(\rr d)$, let $s \geqs 0$ be chosen so that 
\eqref{eq:temperedmeasure} is fulfilled, set $a = 1 \otimes \mu$,
let $p,q \in [1,\infty]$, and let $t_j,s_j \in \ro$ for $j = 1,2$
satisfy for some $\ep > 0$
\begin{equation}\label{eq:weightcond}
t_1 \geqs d + \ep, \quad t_2 \leqs - d - \ep, \quad \mbox{and} \quad s_2 \leqs s_1 - s.
\end{equation}
Then 
$a^w(x,D): M_{t_1, s_1}^{p,q}(\rr d) \to M_{t_2, s_2}^{p,q}(\rr d)$
is continuous. 
\end{cor}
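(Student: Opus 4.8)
The plan is to combine Proposition \ref{prop:posmeasSTFT}, which places the symbol $a = 1 \otimes \mu$ in a concrete weighted modulation space of the form $M^{\infty,1}$, with the continuity criterion \eqref{eq:contpsdomodsp}--\eqref{eq:weightpowers} established in Section \ref{subsec:modspace}. First I would invoke Proposition \ref{prop:posmeasSTFT}: for the chosen $s \geqs 0$ satisfying \eqref{eq:temperedmeasure}, every $n \in \no$, and the given $\ep > 0$, one has
\begin{equation*}
a = 1 \otimes \mu \in M_{0,-s,n,-d-\ep}^{\infty,1}(\rr {2d}).
\end{equation*}
In the notation $M_{\tau_1,\tau_2,\zeta_1,\zeta_2}^{\infty,1}$ the symbol thus has exponents $\tau_1 = 0$, $\tau_2 = -s$, $\zeta_1 = n$ and $\zeta_2 = -d - \ep$, so that $\zeta_2 < 0 \leqs \zeta_1$ and $\tau_2 = -s \leqs 0$.

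Next I would fix $n \in \no$ with $n \geqs |s_1| + |s_2|$, which is legitimate because Proposition \ref{prop:posmeasSTFT} holds for every $n$. With this choice I verify the four hypotheses \eqref{eq:weightpowers} for the mapping $a^w(x,D) : M_{t_1,s_1}^{p,q}(\rr d) \to M_{t_2,s_2}^{p,q}(\rr d)$. Substituting $\tau_2 = -s$, $\zeta_1 = n$ and $\zeta_2 = -d - \ep$ into \eqref{eq:weightpowers}, the requirements read $t_1 \geqs -\zeta_2 = d + \ep$, $t_2 \leqs \zeta_2 = -d - \ep$, $s_2 \leqs s_1 + \tau_2 = s_1 - s$ and $|s_1| + |s_2| \leqs \zeta_1 = n$. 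The first three are precisely the standing assumptions \eqref{eq:weightcond}, and the fourth holds by the choice of $n$. The continuity statement \eqref{eq:contpsdomodsp} then delivers the conclusion that $a^w(x,D) : M_{t_1,s_1}^{p,q}(\rr d) \to M_{t_2,s_2}^{p,q}(\rr d)$ is continuous for all $p,q \in [1,\infty]$.

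The proof is therefore essentially bookkeeping of weight exponents, and the one point I would flag is an apparent gap rather than a genuine obstacle. The reduction of \eqref{eq:weightsinequality1} to \eqref{eq:weightpowers} in Section \ref{subsec:modspace} was carried out under the strict hypothesis $\tau_2 < 0$, whereas here $\tau_2 = -s$ vanishes in the boundary case $s = 0$ (admissible precisely when $\mu$ is a bounded measure). Inspecting that reduction, the exponent $\tau_2$ enters only through the factor $\eabs{\xi}^{-\tau_2}$ in \eqref{eq:weightsinequality2} and the attendant condition $s_2 \leqs s_1 + \tau_2$; for $\tau_2 = 0$ this factor is identically $1$, and the Peetre estimate \eqref{eq:Peetre} together with $\zeta_2 < 0 \leqs \zeta_1$ closes the argument verbatim. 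Hence the criterion \eqref{eq:weightpowers} remains applicable for all $\tau_2 \leqs 0$, and the corollary follows.
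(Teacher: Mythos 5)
Your proof is correct and follows essentially the same route as the paper, which likewise obtains the corollary by combining Proposition \ref{prop:posmeasSTFT} (giving $a \in M_{0,-s,n,-d-\ep}^{\infty,1}(\rr{2d})$ for arbitrary $n$) with the continuity criterion \eqref{eq:contpsdomodsp}, \eqref{eq:weightpowers}; your explicit choice $n \geqs |s_1|+|s_2|$ is exactly the bookkeeping the paper leaves implicit. Your observation about the boundary case $\tau_2 = 0$ (i.e.\ $s = 0$, bounded $\mu$) is a legitimate and correctly repaired point, since the paper states \eqref{eq:weightpowers} only under the strict hypothesis $\tau_2 < 0$ even though, as you note, the Peetre-based reduction goes through verbatim when $\eabs{\xi}^{-\tau_2} \equiv 1$.
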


\begin{rem}\label{rem:targetspaces}
Since $t_2 < t_1$ and $s_2 \leqs s_1$ in the assumptions \eqref{eq:weightcond}
we have $M_{t_1, s_1}^{p,q}(\rr d) \subsetneq M_{t_2, s_2}^{p,q}(\rr d)$. 
Corollary \ref{cor:contmodspWSS} does therefore not give a common space for domain and range of the operator. 
\end{rem}

Proposition \ref{prop:posmeasSTFT} gives evidence that $M_\omega^{\infty,1}(\rr {2d})$ 
for certain weights $\omega \in \mascP(\rr {4d})$
may be an interesting class for covariance operators of non-stationary $\gsp$s that 
contains the covariance operators for WSS $\gsp$s. 
Corollary \ref{cor:contmodspWSS} says that the corresponding WSS covariance operators act continuously between certain modulation spaces, and  Proposition \ref{prop:modspinclusions} relates modulation spaces to the spaces $\cF L^2(\mu)$ used in Section \ref{sec:optimalwss}. 

Nevertheless we need to introduce some restrictions in order to proceed. 
In fact we want to use the spectral invariance results for modulation spaces discussed in Section \ref{subsec:modspace}. 
The space $M_{\omega}^{\infty,1}(\rr {2d})$ with the weight \eqref{eq:weightWSS1} 
is not quite good enough as a symbol class. 
We need instead $M_{1 \otimes \omega}^{\infty,1}(\rr {2d})$ with $\omega(X) = \eabs{X}^r$ for $r \geqs 0$. 

For the Weyl symbols of covariance operators of WSS $\gsp$s this leads to the following criterion. 

\begin{lem}\label{lem:WSSsymbolmodsp}
Let $f \in \cS'(\rr d)$, $r \geqs 0$, $\omega_1(\xi) = \eabs{\xi}^r$ for $\xi \in \rr d$, and 
$\omega_2(X) = \eabs{X}^r$ for $X \in \rr {2d}$.
Then $1 \otimes f \in M_{1 \otimes \omega_2}^{\infty,1}(\rr {2d})$
if and only if $f \in M_{1 \otimes \omega_1}^{\infty,1}(\rr d)$. 
\end{lem}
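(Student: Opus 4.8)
The plan is to compute both modulation-space norms through the short-time Fourier transform using a tensor-product window, and then to compare them by exploiting the factorization of the STFT of a tensor product together with an elementary two-sided estimate for the frequency weight $\eabs{(\xi_1,\xi_2)}^r$. Since modulation-space norms are independent of the window up to equivalence, and since $1 \otimes \omega_2 \in \mascP(\rr {4d})$ and $1 \otimes \omega_1 \in \mascP(\rr {2d})$, I am free to fix a single $\fy \in \cS(\rr d) \setminus \{ 0 \}$ and to use $\fy \otimes \fy \in \cS(\rr {2d}) \setminus \{ 0 \}$ as window on $\rr {2d}$. As recorded in the proof of Proposition \ref{prop:posmeasSTFT}, the STFT of a tensor product factorizes,
\begin{equation*}
V_{\fy \otimes \fy}(1 \otimes f)(x_1, x_2, \xi_1, \xi_2) = V_\fy 1(x_1, \xi_1) \, V_\fy f(x_2, \xi_2),
\end{equation*}
and $V_\fy 1(x_1, \xi_1) = e^{- i \la x_1, \xi_1 \ra} \overline{\wh \fy(-\xi_1)}$, so that $|V_\fy 1(x_1, \xi_1)| = |\wh \fy(-\xi_1)|$ is independent of $x_1$ and is a Schwartz function of $\xi_1$.

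Next I would write out the $M_{1 \otimes \omega_2}^{\infty,1}(\rr {2d})$ norm. The weight $(1 \otimes \omega_2)(x_1,x_2,\xi_1,\xi_2) = \eabs{(\xi_1,\xi_2)}^r$ does not depend on the position variables $(x_1,x_2)$, and $|V_\fy 1|$ does not depend on $x_1$, so the supremum over $(x_1,x_2)$ separates and yields
\begin{equation*}
\| 1 \otimes f \|_{M_{1 \otimes \omega_2}^{\infty,1}}
= \int_{\rr {2d}} |\wh \fy(-\xi_1)| \Big( \sup_{x_2 \in \rr d} |V_\fy f(x_2,\xi_2)| \Big) \eabs{(\xi_1,\xi_2)}^r \, \dd \xi_1 \, \dd \xi_2.
\end{equation*}

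The final step is the two-sided weight estimate. For $r \geqs 0$ the elementary inequalities $\eabs{\xi_2}^r \leqs \eabs{(\xi_1,\xi_2)}^r \leqs \eabs{\xi_1}^r \eabs{\xi_2}^r$ hold, the right one since $\eabs{(\xi_1,\xi_2)}^2 \leqs \eabs{\xi_1}^2 \eabs{\xi_2}^2$ and the left one since $\eabs{(\xi_1,\xi_2)}^2 \geqs \eabs{\xi_2}^2$. Since $\wh \fy \in \cS(\rr d)$, the constants $c_0 = \int_{\rr d} |\wh \fy(-\xi_1)| \, \dd \xi_1$ and $c_1 = \int_{\rr d} |\wh \fy(-\xi_1)| \eabs{\xi_1}^r \, \dd \xi_1$ are both finite, with $c_0 > 0$. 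Inserting the two bounds and recognizing the remaining $\xi_2$-integral as $\| f \|_{M_{1 \otimes \omega_1}^{\infty,1}}$ gives
\begin{equation*}
c_0 \, \| f \|_{M_{1 \otimes \omega_1}^{\infty,1}} \leqs \| 1 \otimes f \|_{M_{1 \otimes \omega_2}^{\infty,1}} \leqs c_1 \, \| f \|_{M_{1 \otimes \omega_1}^{\infty,1}},
\end{equation*}
so the two norms are equivalent and in particular one is finite exactly when the other is, which is the asserted equivalence. I expect the only delicate bookkeeping to be verifying that the frequency weight truly decouples as the position-independent function $\eabs{(\xi_1,\xi_2)}^r$ on $\rr {4d}$, and that the Schwartz decay of $\wh \fy$ makes $c_1$ finite; once the factorization of the tensor-product STFT is in hand, the rest is routine.
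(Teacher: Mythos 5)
Your proof is correct and takes essentially the same approach as the paper: both factor the STFT through the tensor window $\fy \otimes \fy$, bound the weight via $\eabs{\xi_2}^r \leqs \eabs{(\xi_1,\xi_2)}^r \lesssim \eabs{\xi_1}^r \eabs{\xi_2}^r$, and integrate to obtain the two-sided norm equivalence with constants $\| \wh \fy \|_{L^1}$ and $\int_{\rr d} |\wh \fy(-\xi_1)| \eabs{\xi_1}^r \, \dd \xi_1$. The only cosmetic difference is that you verify the upper weight bound by the exact elementary inequality $\eabs{(\xi_1,\xi_2)}^2 \leqs \eabs{\xi_1}^2 \eabs{\xi_2}^2$, whereas the paper invokes Peetre's inequality \eqref{eq:Peetre}.
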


\begin{proof}
Let $\fy \in \cS(\rr d) \setminus 0$. 
As in the proof of Proposition \ref{prop:posmeasSTFT} we have
\begin{equation*}
|V_{\fy \otimes \fy}( 1 \otimes f)(x_1, x_2, \xi_1, \xi_2) | 
= |\wh \fy(-\xi_1)| \, |V_\fy f (x_2, \xi_2) |. 
\end{equation*}
This gives, using \eqref{eq:Peetre}, 
\begin{align*}
|\wh \fy(-\xi_1)| \, |V_\fy f (x_2, \xi_2) | \eabs{\xi_2}^r
& \leqs 
|V_{\fy \otimes \fy}( 1 \otimes f)(x_1, x_2, \xi_1, \xi_2) | \eabs{(\xi_1,\xi_2)}^r \\
& \lesssim |\wh \fy(-\xi_1)| \eabs{\xi_1}^r |V_\fy f (x_2, \xi_2) | \eabs{\xi_2}^r
\end{align*}
and it follows that 
\begin{align*}
\| \wh \fy \|_{L^1} \| f \|_{M_{1 \otimes \omega_1}^{\infty,1}(\rr d)}
& = \int_{\rr {2d}} |\wh \fy(-\xi_1)| \Big( \sup_{x_2 \in \rr d} |V_\fy f (x_2, \xi_2) | \Big) \eabs{\xi_2}^r \dd \xi_1 \dd \xi_2
\leqs
\| 1 \otimes f \|_{M_{1 \otimes \omega_2}^{\infty,1}(\rr {2d})} \\
& \lesssim \int_{\rr d} |\wh \fy(-\xi_1)| \eabs{\xi_1}^r \dd \xi_1
\int_{\rr d} \Big( \sup_{x_2 \in \rr d} |V_\fy f (x_2, \xi_2) | \Big) \eabs{\xi_2}^r  \dd \xi_2 \\
& \lesssim \| f \|_{M_{1 \otimes \omega_1}^{\infty,1}(\rr d)}. 
\end{align*}
\end{proof}

The requirement $\mu = f \in M_{1 \otimes \eabs{\cdot}^r}^{\infty,1}(\rr d)$ for some $r \geqs 0$ on the spectral measure $\mu \in \cM_+(\rr d)$ for a WSS $\gsp$  
is hence equivalent to its covariance operator having a Weyl symbol in 
$M_{1 \otimes \omega}^{\infty,1}(\rr {2d})$ with $\omega(X) = \eabs{X}^r$.

The condition $\mu \in M_{1 \otimes \eabs{\cdot}^r}^{\infty,1}(\rr d)$ is satisfied for 
white noise for any $r \geqs 0$. Indeed then we have $\mu = p \, \dd \xi$ for a constant $p > 0$, cf. Example \ref{ex:example1b}, 
and it follows from the proof of Lemma \ref{lem:WSSsymbolmodsp} that $1 \in M_{1 \otimes \eabs{\cdot}^r}^{\infty,1}(\rr d)$
for any $r \geqs 0$. 
But if $u \in \cL ( \cS(\rr d), L_0^2(\Omega) )$ is white noise of power $p = 1$ then $D^\alpha u$ has spectral measure
$\mu_{D^\alpha u} = \xi^{2 \alpha} \dd \xi$ if $\alpha \in \nn d$, cf. Example \ref{ex:example2b}.
The Weyl symbol of the covariance operator $\cK_{D^\alpha u}$ is therefore $1 \otimes \xi^{2 \alpha}$. 
This Weyl symbol does not belong to 
$M_{1 \otimes \omega}^{\infty,1}(\rr {2d})$ with
$\omega(X) = \eabs{X}^r$ for $X \in \rr {2d}$,
for any $r \geqs 0$ if $\alpha \neq 0$. 
In fact $M_{1 \otimes \omega}^{\infty,1}(\rr {2d}) \subseteq M^{\infty,1}(\rr {2d}) \subseteq (C \cap L^\infty) (\rr {2d})$
\cite{Sjostrand1}. 
Thus white noise is included but its derivatives are excluded 
when we work with the symbol classes 
$M_{1 \otimes \eabs{\cdot}^r }^{\infty,1}(\rr {2d})$ for some $r \geqs 0$. 
Furthermore $\mu \in M_{1 \otimes \eabs{\cdot}^r}^{\infty,1}(\rr d) \subseteq (C \cap L^\infty) (\rr d)$ implies that $\mu$ is absolutely continuous with respect to Lebesgue measure. 

\begin{rem}\label{rem:GSPmodsp}
Let $u \in \cL ( \cS(\rr d), L_0^2(\Omega) )$ and assume that its covariance operator satisfies 
$\cK_u = a_u^w(x,D)$ with $a_u \in M_{1 \otimes \omega}^{\infty,1}(\rr {2d})$
with $\omega(X) = \eabs{X}^r$ for some $r\geqs 0$. 
By \eqref{eq:contpsdomodsp2} and the surrounding discussion it follows that $\cK_u \in \cL( M^1 (\rr d) )$. 
Using $(M^1)' = M^\infty$
and \eqref{eq:modspnested}
this gives for $\psi, \fy \in M^1 (\rr d)$
\begin{equation*}
|( \cK_u \psi, \fy )| \leqs \| \cK_u \psi \|_{M^1(\rr d)} \| \fy \|_{M^\infty(\rr d)}
\lesssim \| \psi \|_{M^1(\rr d)} \| \fy \|_{M^1(\rr d)} 
\end{equation*}
and it follows that 
$u \in \cL ( M^1(\rr d), L_0^2(\Omega) )$, cf. Remark \ref{rem:OpEq}. 
The assumption $a_u \in M_{1 \otimes \omega}^{\infty,1}(\rr {2d})$ hence implies that 
the corresponding class of $\gsp$s is a subspace of the class studied in 
\cite{Feichtinger2,Hormann1,Keville1,Wahlberg1} with test function space Feichtinger's algebra. 
\end{rem}

As preparation for the next result on non-stationary filters we will discuss operators $\cK \in \cL( L^2(\rr d) )$ that satisfy an assumption of the form 
\begin{equation}\label{eq:lowerboundL2}
(\cK f, f) \geqs \ep \| f \|_{L^2(\rr d)}^2, \quad f \in L^2(\rr d), 
\end{equation}
for some $\ep > 0$. This assumption is a strengthening of $\cK \geqs 0$ on $L^2(\rr d)$. 

\begin{lem}\label{lem:lowerbound}
If $\cK \in \cL( L^2(\rr d) )$ satisfies $\cK \geqs 0$ then $\cK$ is invertible on $L^2(\rr d)$ 
if and only if \eqref{eq:lowerboundL2} is satisfied for some $\ep > 0$.
\end{lem}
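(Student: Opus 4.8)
The plan is to first record the standing structural fact that $\cK \geqs 0$ forces $\cK$ to be self-adjoint: the hypothesis $(\cK f, f) \geqs 0$ makes the quadratic form real-valued for every $f \in L^2(\rr d)$, and by the polarization identity a bounded operator with a real quadratic form on a complex Hilbert space satisfies $\cK = \cK^*$. Thus throughout I may treat $\cK$ as a bounded, non-negative, self-adjoint operator on $L^2(\rr d)$, and I then prove the two implications separately by elementary Hilbert space arguments.

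For the direction \eqref{eq:lowerboundL2} $\implies$ invertibility I would assume $(\cK f, f) \geqs \ep \| f \|_{L^2}^2$ and combine it with Cauchy--Schwarz, $\ep \| f \|_{L^2}^2 \leqs (\cK f, f) \leqs \| \cK f \|_{L^2} \, \| f \|_{L^2}$, to deduce the coercivity estimate $\| \cK f \|_{L^2} \geqs \ep \| f \|_{L^2}$. This shows at once that $\cK$ is injective and has closed range. Since $\cK$ is self-adjoint, $\Ran(\cK)^\perp = \Ker(\cK^*) = \Ker(\cK) = \{ 0 \}$, so $\Ran(\cK)$ is dense; being also closed it equals all of $L^2(\rr d)$. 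Hence $\cK$ is a bijection, and the same lower bound $\| \cK f \|_{L^2} \geqs \ep \| f \|_{L^2}$ bounds the inverse, so $\cK$ is invertible with $\| \cK^{-1} \| \leqs \ep^{-1}$.

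For the converse I would exploit the non-negative square root. Because $\cK \geqs 0$, it admits a bounded non-negative self-adjoint square root $\cK^{1/2}$ with $(\cK f, f) = \| \cK^{1/2} f \|_{L^2}^2$, and invertibility of $\cK = (\cK^{1/2})^2$ is equivalent to invertibility of $\cK^{1/2}$, with $\| (\cK^{1/2})^{-1} \| = \| \cK^{-1} \|^{1/2}$. Writing $\| f \|_{L^2} = \| (\cK^{1/2})^{-1} \cK^{1/2} f \|_{L^2} \leqs \| \cK^{-1} \|^{1/2} \| \cK^{1/2} f \|_{L^2}$ and squaring yields $(\cK f, f) = \| \cK^{1/2} f \|_{L^2}^2 \geqs \| \cK^{-1} \|^{-1} \| f \|_{L^2}^2$, which is \eqref{eq:lowerboundL2} with $\ep = \| \cK^{-1} \|^{-1}$. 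An alternative, equally short route is to invoke the spectral theorem directly: invertibility of a non-negative self-adjoint operator means $0 \notin \sigma(\cK) \subseteq [0,\infty)$, so $\delta := \min \sigma(\cK) > 0$ and the functional calculus gives $(\cK f, f) \geqs \delta \| f \|_{L^2}^2$.

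The statement is entirely standard and I anticipate no genuine obstacle. The only point that deserves a line of care is the passage in the first implication from "injective with closed range" to "surjective", which rests on the identity $\overline{\Ran \cK} = (\Ker \cK)^\perp$ valid for self-adjoint operators; everything else is Cauchy--Schwarz and the existence of the square root.
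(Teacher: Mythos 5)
Your proof is correct and follows essentially the same route as the paper's: a coercivity estimate yields injectivity and closed range, kernel--range duality for self-adjoint operators yields density of the range, and the converse direction rests on boundedness of the inverse of the positive square root $\cK^{1/2}$. The only cosmetic differences are that you apply Cauchy--Schwarz to $\cK$ itself rather than working through $\cK^{1/2}$ in the forward direction, and you phrase the converse as a direct estimate with the explicit constant $\ep = \| \cK^{-1} \|^{-1}$, where the paper instead argues by contradiction using a sequence $\{ f_n \}$ with $\| f_n \|_{L^2} = 1$ and $(\cK f_n, f_n) \to 0$.
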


\begin{proof}
Suppose \eqref{eq:lowerboundL2} is satisfied for some $\ep > 0$. Since $(\cK f, f) = \| \cK^{\frac12} f \|_{L^2}^2$
it follows that $\ker \cK^{\frac12} = \{ 0 \}$, $\ran \cK^{\frac12}$ is closed in $L^2$, and hence $\ran \cK^{\frac12} = \left( \ker \cK^{\frac12} \right)^\perp = L^2(\rr d)$. 
By the open mapping theorem $\cK^{\frac12}$ is invertible on $L^2$ and hence $\cK^{-1} = \left( \cK^{-\frac12} \right)^2 \in \cL(L^2)$. 

Suppose on the other hand that \eqref{eq:lowerboundL2} is not satisfied for any $\ep > 0$. 
Then there exists a sequence $\{ f_n \}_{n \geqs 1} \subseteq L^2(\rr d)$ such that $\| f_n \|_{L^2} = 1$ for all $n \geqs 1$ and 
$\lim_{n \to \infty} (\cK f_n, f_n) = 0$. 
It follows that 
\begin{equation}\label{eq:limitzero}
\lim_{n \to \infty} \| \cK^{\frac12} f_n \|_{L^2} = 0. 
\end{equation}
If we assume that $\cK$ is invertible on $L^2$ then $\cK^{-1} \geqs 0$ and hence $\cK^{-\frac12} \in \cL(L^2)$ is a 
well defined operator. 
Combined with \eqref{eq:limitzero} this gives $f_n = \cK^{-\frac12} \cK^{\frac12} f_n \to 0$ in $L^2$ which contradicts 
the assumption $\| f_n \|_{L^2} = 1$ for all $n \geqs 1$. Hence $\cK$ cannot be invertible on $L^2$.
\end{proof}

The following theorem is the main result on the operator equation \eqref{eq:operatoreq3}
for non-stationary $\gsp$s with non-commuting covariance operators. 
It is formulated in terms of Weyl symbols of covariance operators, using the relation 
\eqref{eq:KernelWeylsymbol} between Schwartz kernels and Weyl symbols for operators (cf. Remark \ref{rem:Fouriertransform}). 
The result has a short proof since it is a consequence of results in \cite{Wahlberg1}, with clarified assumptions
using \eqref{eq:lowerboundL2} and Lemma \ref{lem:lowerbound}. 

For Gabor expansions of $a \in M_{1 \otimes \omega}^{\infty,1}(\rr {2d})$ we use the notation 
\begin{equation}\label{eq:gaborexp1}
a = \sum _{{\bm \Lambda} \in \Theta} g_a ( {\bm \Lambda} )\Pi( {\bm \Lambda} ) \Phi, 
\quad g_a( {\bm \Lambda} ) = ( a,\Pi({\bm \Lambda} ) \widetilde{\Phi} )
\end{equation}
where $\{ g_a ({\bm \Lambda} ) \}_{ {\bm \Lambda} \in \Theta }$ are the Gabor coefficients for $a$, 
$\Phi$ is the Gaussian \eqref{eq:gaussianwindow}, 
$\wt \Phi \in \cS(\rr {2d})$ is its canonical dual window,
and $\Theta = \{ (a n, b k) \}_{n,k \in \zz {2d}} \subseteq \rr {4d}$ is a lattice determined by $a, b > 0$
that satisfy $a b < \pi$, cf. \eqref{eq:gaborexp}. 

\begin{thm}\label{thm:filternonstat1}
Suppose $u,w  \in \cL ( M^1(\rr d), L_0^2(\Omega) )$ are uncorrelated tempered $\gsp$s
with covariance operators $\cK_u, \cK_w: M^1(\rr d) \to M^\infty(\rr d)$. 
Suppose the Weyl symbols of $\cK_u$ and $\cK_w$ satisfy $a_u, a_w \in M_{1 \otimes \omega}^{\infty,1}(\rr {2d})$
with $\omega(X) = \eabs{X}^r$ for some $r\geqs 0$, and suppose 
\begin{equation}\label{eq:lowerboundL2signoise}
\left( (\cK_u + \cK_w) f, f \right) \geqs \ep \| f \|_{L^2(\rr d)}^2, \quad f \in L^2(\rr d), 
\end{equation}
holds for some $\ep > 0$. 
Then the following holds: 

\begin{enumerate}[\rm (i)]

\item 
The operator equation \eqref{eq:operatoreq3} is solved by 
\begin{equation}\label{eq:solopereq1}
F = \cK_u (\cK_u + \cK_w)^{-1} = a^w(x,D)
\end{equation}
where $a \in M_{1 \otimes \omega}^{\infty,1}(\rr {2d})$, and 
$a^w(x,D): M_{t,s}^{p,q}(\rr d) \to M_{t,s}^{p,q}(\rr d)$
is continuous for all $1 \leqs p,q \leqs \infty$ and all $t,s \in \ro$ such that $\max(|t|,|s|) \leqs \frac12 r$. 

\item 
The Gabor coefficients 
$\{ g_a ( {\bm \Lambda} ) \}_{ {\bm \Lambda} \in \Theta}$ 
of $a$ can be expressed in those of $a_u$ as  
\begin{equation}\label{eq:matrixmult}
g_a = M(g_b) \cdot g_{a_u}
\end{equation}
where $M(g_b)$ is an infinite matrix defined on $\Theta \times \Theta$, 
depending linearly on the Gabor coefficients $\{ g_b ( {\bm \Lambda} ) \}_{ {\bm \Lambda} \in \Theta}$ of 
$b \in M_{1 \otimes \omega}^{\infty,1}(\rr {2d})$
where $b^w(x,D) = (\cK_u + \cK_w)^{-1}$. 

\item 
If $r > 2d$ then the matrix $M(g_b)$ has off-diagonal decay 
\begin{equation}\label{eq:matrixdecay}
| M(g_b)({\bm \Lambda},{\bm \Omega}) | 
\lesssim \langle {\bm
\Lambda}-{\bm \Omega} \rangle^{-t}, \quad {\bm \Lambda},{\bm \Omega}
\in \Theta, 
\end{equation}
for any $t < r/2 - d$. 

\end{enumerate}

\end{thm}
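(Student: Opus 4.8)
First I would note that $\cK_u + \cK_w$ has Weyl symbol $a_u + a_w \in M_{1 \otimes \omega}^{\infty,1}(\rr {2d})$, since this space is a vector space. The lower bound \eqref{eq:lowerboundL2signoise} together with $\cK_u + \cK_w \geqs 0$ on $L^2(\rr d)$ and Lemma \ref{lem:lowerbound} shows that $\cK_u + \cK_w$ is invertible on $L^2(\rr d)$. Gr\"ochenig's spectral invariance (Wiener) theorem \cite[Theorem~4.6]{Grochenig3} then yields $(\cK_u + \cK_w)^{-1} = b^w(x,D)$ with $b \in M_{1 \otimes \omega}^{\infty,1}(\rr {2d})$. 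Since the Wiener property entails that $M_{1 \otimes \omega}^{\infty,1}(\rr {2d})$ is a Banach algebra under the twisted product $\#$ of Weyl symbols, the composition $F = \cK_u (\cK_u + \cK_w)^{-1} = a_u^w(x,D) \, b^w(x,D)$ has Weyl symbol $a = a_u \# b \in M_{1 \otimes \omega}^{\infty,1}(\rr {2d})$. A direct computation gives $F(\cK_u + \cK_w) = \cK_u$, so $F$ solves \eqref{eq:operatoreq3}, and the mapping property of $a^w(x,D)$ on $M_{t,s}^{p,q}(\rr d)$ for $\max(|t|,|s|) \leqs r/4$ is precisely \eqref{eq:contpsdomodsp2}.

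\textbf{Part (ii).} The plan is to read the matrix off from the bilinearity of the twisted product in the Gabor frame. Expanding $a_u = \sum_{{\bm \Omega} \in \Theta} g_{a_u}({\bm \Omega}) \Pi({\bm \Omega}) \Phi$ as in \eqref{eq:gaborexp1}, using $a = a_u \# b$, and interchanging the unconditionally convergent sum with the coefficient functional, I would obtain
\begin{equation*}
g_a({\bm \Lambda}) = \sum_{{\bm \Omega} \in \Theta} M(g_b)({\bm \Lambda},{\bm \Omega}) \, g_{a_u}({\bm \Omega}), \qquad M(g_b)({\bm \Lambda},{\bm \Omega}) = \left( (\Pi({\bm \Omega}) \Phi) \# b, \, \Pi({\bm \Lambda}) \widetilde\Phi \right).
\end{equation*}
Expanding also $b = \sum_{{\bm \Gamma} \in \Theta} g_b({\bm \Gamma}) \Pi({\bm \Gamma}) \Phi$ exhibits $M(g_b)$ as a superposition, linear in the Gabor coefficients of $b$, of the elementary numbers $\left( (\Pi({\bm \Omega}) \Phi) \# (\Pi({\bm \Gamma}) \Phi), \, \Pi({\bm \Lambda}) \widetilde\Phi \right)$. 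This establishes \eqref{eq:matrixmult}.

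\textbf{Part (iii) and the main obstacle.} The remaining and hardest step is the off-diagonal decay \eqref{eq:matrixdecay}. The crucial input is that the twisted product of two time-frequency shifted Gaussians is again a Gaussian carried by a phase and localized, up to Schwartz tails, at a phase-space point determined symplectically (essentially the midpoint) of ${\bm \Omega}$ and ${\bm \Gamma}$; its short-time Fourier transform against $\widetilde\Phi$ therefore decays rapidly away from that point. I would combine this rapid decay of the elementary entries with the weighted summability of $\{ g_b({\bm \Gamma}) \}$, which by the Gabor characterization \eqref{eq:gabornorm} of $M_{1 \otimes \omega}^{\infty,1}(\rr {2d})$ with $\omega(X) = \eabs{X}^r$ controls the coefficients of $b$ in a weighted $\ell^{\infty,1}$ norm, and then carry out the resulting convolution-type sum over ${\bm \Gamma}$. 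Tracking the orders through Peetre's inequality \eqref{eq:Peetre}, the frequency weight $\eabs{\cdot}^r$ gets distributed between the two factors of the twisted product, producing the halving to $r/2$, while absolute convergence of the lattice sum in the transversal directions costs a factor of $d$, yielding decay of any order $t < r/2 - d$. The delicate point, which I expect to be the genuine obstacle, is to make the Gaussian twisted-product estimate sufficiently uniform to survive the $\ell^{\infty,1}$ summation against $g_b$, and to verify that the bookkeeping indeed produces the stated exponent $r/2 - d$ rather than a weaker one.
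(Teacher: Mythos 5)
Parts (i) and (ii) of your proposal are correct and follow essentially the same route as the paper. For (i) the paper likewise combines $L^2$-boundedness of $\cK_u + \cK_w$ (via \eqref{eq:contpsdomodsp2}), Lemma \ref{lem:lowerbound}, Gr\"ochenig's spectral invariance theorem \cite[Theorem~4.6]{Grochenig3}, and closedness of $M_{1 \otimes \omega}^{\infty,1}(\rr {2d})$ under the Weyl product; note only that the algebra property is \cite[Theorem~4.3]{Grochenig3}, a result logically independent of (indeed prior to) the Wiener property, so your phrase ``the Wiener property entails that \dots is a Banach algebra'' has the implication backwards, though harmlessly. For (ii) the paper simply cites \cite[Proposition~7]{Wahlberg1}, whose content is exactly your derivation: $M(g_b)({\bm \Lambda},{\bm \Omega}) = \sum_{{\bm \Gamma} \in \Theta} \mathcal{M}({\bm \Omega},{\bm \Gamma},{\bm \Lambda})\, g_b({\bm \Gamma})$, with $\mathcal{M}$ the Gabor coefficients of twisted products of time-frequency shifted Gaussians. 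One caveat there: since $p = \infty$, the Gabor expansion of $a_u$ and $b$ converges only in the weak$^*$ topology, not unconditionally in norm, so the interchange of sum and coefficient functional requires weak$^*$ continuity of the Weyl product in each factor; this is handled in the cited reference but is glossed over in your sketch.

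The genuine gap is part (iii). The paper does not prove the off-diagonal decay from scratch: it invokes \cite[Corollary~11]{Wahlberg1}, which is precisely the quantitative estimate you yourself flag as ``the genuine obstacle'' and leave unverified. Your heuristic --- the weight $\eabs{\cdot}^r$ ``distributed between the two factors'' producing the halving to $r/2$, with lattice summability costing $d$ --- is not carried out, and as stated it is not obviously the correct mechanism. The Gabor coefficients of $b \in M_{1 \otimes \omega}^{\infty,1}(\rr {2d})$ decay of order $r$ only in the frequency half ${\bm \Gamma}' $ of the lattice variable (one takes a supremum over the time half), so the decay of $M(g_b)({\bm \Lambda},{\bm \Omega})$ in $\eabs{{\bm \Lambda}-{\bm \Omega}}$ has to be extracted from the anisotropic interplay between this one-sided decay of $g_b$ and the Gaussian/Schwartz localization of $\mathcal{M}$, which constrains different symplectic combinations of ${\bm \Lambda}$, ${\bm \Omega}$, ${\bm \Gamma}$; pinning down the exact threshold $t < r/2 - d$, and the role of the hypothesis $r > 2d$, is the entire substance of \cite[Corollary~11]{Wahlberg1}. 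So your proposal is complete only modulo this external estimate: to close it you should either cite that result, as the paper does, or carry out the full lattice estimate, which is a nontrivial computation rather than bookkeeping.
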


\begin{proof}
The assumptions, \eqref{eq:contpsdomodsp2} and the surrounding discussion imply that 
\begin{equation*}
\cK_u, \cK_w: M_{t,s}^{p,q}(\rr d) \to M_{t,s}^{p,q}(\rr d)
\end{equation*}
are continuous for all $p,q \in [1,\infty]$, and all $t,s \in \ro$ such that $\max(|t|,|s|) \leqs \frac12 r$. 
In particular $\cK_u$ and $\cK_w$ are continuous on $L^2(\rr d) = M^2(\rr d)$, 
and hence $\cK_u + \cK_w$ is continuous on $L^2$. 
By the assumption \eqref{eq:lowerboundL2signoise} and Lemma \ref{lem:lowerbound} the operator $\cK_u + \cK_w$ is invertible on $L^2$. 

It follows from Gr\"ochenig's spectral invariance theorem 
\cite[Theorem~4.6]{Grochenig3} that $(\cK_u + \cK_w)^{-1} = b^w(x,D)$
where $b \in M_{1 \otimes \omega}^{\infty,1}(\rr {2d})$. 
From the algebraic result
\cite[Theorem~4.3]{Grochenig3} we get the conclusion that $F = \cK_u (\cK_u + \cK_w)^{-1} 
= a_u^w(x,D) b^w(x,D) =  a^w(x,D)$
with $a \in M_{1 \otimes \omega}^{\infty,1}(\rr {2d})$. 
It again follows from \eqref{eq:contpsdomodsp2} and the surrounding discussion that 
$a^w(x,D): M_{t,s}^{p,q}(\rr d) \to M_{t,s}^{p,q}(\rr d)$
is continuous for all 
$1 \leqs p,q \leqs \infty$ and all $t,s \in \ro$ such that $\max(|t|,|s|) \leqs \frac12 r$.
We have shown \eqref{eq:solopereq1} and statement (i). 

Statement (ii) follows from \cite[Section~4]{Wahlberg1} (cf. also \cite{Chen1}). 
Indeed let 
\begin{equation*}
\{ g_a ({\bm \Lambda}) \}_{{\bm \Lambda} \in \Theta}, \quad
 \{ g_b ({\bm \Lambda}) \}_{{\bm \Lambda} \in \Theta}, \quad 
 \{ g_{a_u} ({\bm \Lambda}) \}_{{\bm \Lambda} \in \Theta}
\end{equation*}
denote the Gabor coefficients defined as in \eqref{eq:gaborexp1} for for $a, b, a_u$, respectively.  
By \cite[Proposition~7]{Wahlberg1} these Gabor coefficients  are related as in \eqref{eq:matrixmult}, 
that is 
\begin{equation*}
g_a ( {\bm \Lambda} ) = 
\sum_{{\bm \Omega} \in \Theta} M(g_b) ( {\bm \Lambda}, {\bm \Omega} ) \, g_{a_u} ( {\bm \Omega} ), \quad {\bm \Lambda} \in \Theta,
\end{equation*}
where 
\begin{equation*}
M (g_b) ({\bm \Lambda},{\bm \Omega}) 
= \sum_{{\bm \Gamma} \in \Theta}
{\mathcal M}({\bm \Omega},{\bm \Gamma},{\bm \Lambda}) g_b ( {\bm
\Gamma}), \quad {\bm \Lambda},{\bm \Omega} \in \Theta, 
\end{equation*}
is a $\co$-valued infinite matrix indexed by $\Theta \times \Theta \subseteq \rr {8d}$ depending 
linearly on the Gabor coefficients $\{ g_b ({\bm \Gamma})\}_{ {\bm \Gamma} \in \Theta}$ of $b$, and 
defined in terms of 
\begin{align*}
& {\mathcal M}({\bm \Omega},{\bm \Gamma},{\bm \Lambda}) \\
& = (2 \pi)^{d} \pi^{\frac{d}{2}} \exp \Big( i \left(
\sigma(\Omega+\Omega'+\Gamma-\Gamma',\Lambda') +
\sigma(\Omega'+\Gamma',\Omega+\Gamma) \right) \Big) \\
& \qquad \times \exp \left( -\frac1{4}\left|\Omega-\Omega'-\Gamma-\Gamma'
\right|^2 \right) \mathcal V_{\wt{\Phi}} \Phi \left( \Lambda-
\frac{\Omega+\Omega'+\Gamma-\Gamma'}{2},
\Lambda'-\frac{\Omega+\Omega'- \Gamma+\Gamma'}{2} \right), \\
& \quad{\bm \Omega}, {\bm \Gamma},  {\bm \Lambda} \in \Theta,
\end{align*}
with notation as in \eqref{eq:lattice}, and where
\begin{equation*}
\mathcal V_{\wt{\Phi}} \Phi(X,Y) = (2 \pi)^{-d} ( \Phi, \Pi(X,Y) \wt \Phi ), \quad X, Y \in \rr {2d}, 
\end{equation*}
denotes a symplectic version of the STFT \eqref{eq:STFT}, cf. \eqref{eq:symplecticTFshift}, 
with $\wt \Phi$ the canonical dual window of $\Phi$.

It remains to show statement (iii). 
The assumption $r > 2 d$ admits the use of \cite[Corollary~11]{Wahlberg1}
which says that the matrix $M (g_b)$ has off-diagonal decay as in 
\eqref{eq:matrixdecay} for any $t < r/2 - d$. 
\end{proof}

The conclusion \eqref{eq:matrixmult} with the matrix $M (g_b)$ having off-diagonal decay as 
in \eqref{eq:matrixdecay} may be regarded as conceptually reminiscent of the conclusions in Theorem \ref{thm:optimalwss}, 
and in \eqref{eq:optspectralfunc} and \eqref{eq:optimalcommute} respectively.
In fact \eqref{eq:matrixmult} can be seen almost as a pointwise multiplication when the matrix has rapid off-diagonal decay, 
of the Gabor coefficients of the Weyl symbol of $\cK_u$
with a matrix that depends linearly on the Gabor coefficients of the Weyl symbol of $b^w(x,D) = (\cK_u + \cK_w)^{-1}$. 

An engineering point of view for optimal filters for non-stationary signals is treated in
\cite{Hlawatsch1}. 

\begin{rem}\label{rem:consequenceWSS}
The assumptions of Theorem \ref{thm:filternonstat1} and Parseval's theorem imply that 
\begin{equation}\label{eq:L2prop}
\left( (\cK_u + \cK_w) f, f \right) \asymp \| f \|_{L^2}^2 = \| \wh f \|_{L^2}^2, \quad f \in L^2(\rr d). 
\end{equation}
If $u$ and $w$ are WSS then by \eqref{eq:gspwss1} we have 
\begin{equation}\label{eq:L2spectral}
\left( (\cK_u + \cK_w) f, f \right) = \| \wh f \|_{L^2(\mu_u + \mu_w)}^2, \quad f \in L^2(\rr d). 
\end{equation}
It follows from \eqref{eq:L2prop} and \eqref{eq:L2spectral} that $L^2(\mu_u + \mu_w) = L^2(\rr d)$, and 
by the Radon--Nikodym theorem we have $\mu_u + \mu_w = f \dd x$
where $f \in L^\infty(\rr d)$ and $f^{-1} \in L^\infty(\rr d)$. 
The measure $\mu_u + \mu_w$ satisfies 
\begin{equation*}
C^{-1}\dd x (A) \leqs (\mu_u + \mu_w)(A)
\leqs C \dd x (A), \quad A \in \cB(\rr d), 
\end{equation*}
where $C > 0$ and $\dd x$ denotes Lebesgue measure. 
The measure $\mu_u + \mu_w$ is thus proportional to Lebesgue measure. 
We may conclude that $u + w$ resembles white noise in that all frequencies are supported in $\mu_u + \mu_w$, 
albeit with intensities that are uniformly upper and lower bounded rather than constant. 
\end{rem}

\begin{rem}\label{rem:Sjostrandpointwise}
Finally we extract an observation concerning the Gr\"ochenig--Sj\"ostrand space $M_{1 \otimes \eabs{\cdot}^r}^{\infty,1}(\rr d)$ for $r \geqs 0$. 
If $f \in M_{1 \otimes \eabs{\cdot}^r}^{\infty,1}(\rr d)$ is real-valued and $f(x) \geqs \ep > 0$ for all $x \in \rr d$, then $1/f \in M_{1 \otimes \eabs{\cdot}^r}^{\infty,1}(\rr d)$. 
Indeed by Lemma \ref{lem:WSSsymbolmodsp} we have $a = 1 \otimes f \in M_{1 \otimes \eabs{\cdot}^r}^{\infty,1}(\rr {2d})$,
and 
\begin{equation*}
( a^w(x,D) g, g )_{L^2}
= ( f \wh g, \wh g )_{L^2} 
\geqs \ep \| g \|_{L^2}^2, \quad g \in L^2(\rr d), 
\end{equation*}
so it follows from Lemma \ref{lem:lowerbound} that $a^w(x,D)$ is invertible on $L^2$. 
The inverse has Weyl symbol $a^w(x,D)^{-1} = b^w(x,D)$ with $b = 1 \otimes 1/f$. 
By Gr\"ochenig's spectral invariance theorem \cite[Theorem~4.6]{Grochenig3} we obtain $b \in M_{1 \otimes \eabs{\cdot}^r}^{\infty,1}(\rr {2d})$, 
and finally Lemma \ref{lem:WSSsymbolmodsp} yields the claim $1/f \in M_{1 \otimes \eabs{\cdot}^r}^{\infty,1}(\rr d)$. 
\end{rem}

\subsection{Further remarks on the operator equation}\label{subsec:opereq}

In this final subsection we make a few operator theoretic remarks on the operator equation 
\begin{equation}\label{eq:operatoreq4}
\cK_u = F ( \cK_u + \cK_w). 
\end{equation}
We assume that $\cK_u, \cK_w \in \cL( \cH )$ for a Hilbert space $\cH$,
and $\cK_u \geqs 0$ and $\cK_w \geqs 0$ on $\cH$ (cf. Section \ref{subsec:commuting}), 
but we do not assume that $\cK_u + \cK_w$ is invertible on $\cH$. 
We seek a solution $F \in \cL( \cH )$ to \eqref{eq:operatoreq4}, or equivalently, since 
$\cK_u^* = \cK_u$ and $\cK_w^* = \cK_w$, 
to the equation
\begin{equation}\label{eq:operatoreq5}
\cK_u = ( \cK_u + \cK_w) F^*.
\end{equation}

We have
\begin{equation}\label{eq:kernelinclusion}
\ker( \cK_u + \cK_w)  \subseteq \ker \cK_u \cap \ker \cK_w. 
\end{equation}
In fact if $( \cK_u + \cK_w )f = 0$ for $f \in \cH$ then
\begin{equation*}
0 = ( ( \cK_u + \cK_w) f,f) = ( \cK_u f,f) + ( \cK_w f, f)
\end{equation*}
which implies 
\begin{equation*}
0 = ( \cK_u f,f) = \| \cK_u^{\frac12} f \|_{\cH}^2 \quad \mbox{and} \quad  0 = ( \cK_w f,f) = \| \cK_w^{\frac12} f \|_{\cH}^2. 
\end{equation*}
Thus $\cK_u f = \cK_u^{\frac12} \cK_u^{\frac12} f = 0$ so $f \in \ker \cK_u$, and likewise $f \in \ker \cK_w$, 
which proves \eqref{eq:kernelinclusion}. 

From \eqref{eq:kernelinclusion} and $(\ker \cK_u)^\perp = \overline{\ran \cK_u}$ (where $\overline{X}$ denotes the closure of a subspace $X \subseteq \cH$) we obtain
\begin{equation}\label{eq:rangeinclusion1}
\ran \cK_u \subseteq \overline{\ran \cK_u } \subseteq \overline{ \ran ( \cK_u + \cK_w )}. 
\end{equation}

If we assume the strengthened inclusion 
\begin{equation}\label{eq:rangeinclusion2}
\ran \cK_u \subseteq  \ran ( \cK_u + \cK_w )
\end{equation}
then according to Douglas' lemma \cite{Douglas1,Fillmore1,Forough1}, 
the inclusion \eqref{eq:rangeinclusion2} is equivalent to the existence of $F \in \cL( \cH )$
that solves \eqref{eq:operatoreq5}. 
The filter linear operator $F$ is the unique solution to \eqref{eq:operatoreq5}
that satisfies 
\begin{equation}\label{eq:uniquenessconditions}
\begin{aligned}
\ran F^* & \subseteq \overline{ \ran( \cK_u + \cK_w ) },  \\
\ker F^* & = \ker \cK_u,  \\
\| F \| & = \sup_{\| f \| = 1} \frac{\| \cK_u f \|}{\| ( \cK_u + \cK_w) f \|}. 
\end{aligned}
\end{equation}

We may summarize this as follows. 

\begin{prop}\label{prop:boundedfilter}
Let $\cH$ be a Hilbert space, 
suppose $\cK_u, \cK_w \in \cL( \cH)$, $\cK_u, \cK_w \geqs 0$ on $\cH$,
and \eqref{eq:rangeinclusion2}.
Then there exists $F \in \cL( \cH )$ that solves \eqref{eq:operatoreq4}. 
The operator $F$ is the unique solution that satisfies \eqref{eq:uniquenessconditions}. 
\end{prop}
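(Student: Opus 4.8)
The plan is to reduce the symmetric equation \eqref{eq:operatoreq4} to its adjoint form \eqref{eq:operatoreq5} and then apply Douglas' range inclusion theorem \cite{Douglas1,Fillmore1,Forough1} directly. Since $\cK_u$ and $\cK_w$ are non-negative they are self-adjoint, so $F$ solves $\cK_u = F(\cK_u + \cK_w)$ if and only if $C := F^*$ solves $\cK_u = (\cK_u + \cK_w) C$: one passes between the two by taking Hilbert space adjoints and using $\cK_u^* = \cK_u$ and $(\cK_u + \cK_w)^* = \cK_u + \cK_w$. It therefore suffices to produce a bounded $C$ with $\cK_u = (\cK_u + \cK_w) C$ and set $F = C^*$.

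The main step is to invoke Douglas' lemma with $A = \cK_u$ and $B = \cK_u + \cK_w$. The range inclusion $\ran A \subseteq \ran B$ required by the theorem is exactly the assumption \eqref{eq:rangeinclusion2}. Douglas' lemma then yields a unique $C \in \cL(\cH)$ with $A = BC$ subject to the canonical constraints $\ran C \subseteq \overline{\ran B^*}$ and $\ker C = \ker A$, and with operator norm $\|C\|^2 = \inf\{\mu \geqs 0 : AA^* \leqs \mu\, BB^*\}$. Reverting to $F = C^*$ produces the desired solution of \eqref{eq:operatoreq4}.

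It then remains to translate the three uniqueness conditions on $C$ into the stated conditions \eqref{eq:uniquenessconditions} on $F$, and here self-adjointness does all the work. Because $B^* = B = \cK_u + \cK_w$, the range constraint becomes $\ran F^* \subseteq \overline{\ran B^*} = \overline{\ran(\cK_u + \cK_w)}$, the kernel constraint reads $\ker F^* = \ker \cK_u$ verbatim, and for the norm I would rewrite $AA^* = \cK_u^2$ and $BB^* = (\cK_u + \cK_w)^2$. The operator inequality $\cK_u^2 \leqs \mu\,(\cK_u + \cK_w)^2$, tested against $f$ and simplified via $(\cK_u^2 f, f) = \|\cK_u f\|^2$ and $((\cK_u+\cK_w)^2 f, f) = \|(\cK_u+\cK_w)f\|^2$, is equivalent to $\|\cK_u f\| \leqs \sqrt{\mu}\,\|(\cK_u + \cK_w)f\|$ for all $f$; taking the infimum over admissible $\mu$ gives $\|F\| = \|C\| = \sup_{\|f\|=1}\|\cK_u f\|/\|(\cK_u + \cK_w)f\|$, which is the last line of \eqref{eq:uniquenessconditions}.

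Since the argument is essentially an application of a cited theorem, there is no deep obstacle; the only point requiring care is the bookkeeping of adjoints, namely verifying that passing from $C$ to $F = C^*$ carries Douglas' canonical minimal-norm conditions precisely onto \eqref{eq:uniquenessconditions}. This hinges throughout on the self-adjointness of $\cK_u$ and of $\cK_u + \cK_w$, which is what makes the quadratic-form reduction of the norm identity and the identification $\ran B^* = \ran(\cK_u+\cK_w)$ go through cleanly.
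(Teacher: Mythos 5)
Your proposal is correct and follows essentially the same route as the paper: pass to the adjoint equation $\cK_u = (\cK_u + \cK_w)F^*$ using self-adjointness of the non-negative operators, apply Douglas' lemma with $A = \cK_u$, $B = \cK_u + \cK_w$, and translate Douglas' canonical uniqueness conditions into \eqref{eq:uniquenessconditions}. Your explicit verification that the norm condition $AA^* \leqs \mu\, BB^*$ reduces via quadratic forms to the stated supremum of $\|\cK_u f\|/\|(\cK_u+\cK_w)f\|$ is a detail the paper leaves implicit, but the argument is the same.
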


Note that we do not have to assume that $\cK_u + \cK_w$ is invertible, as we did in Theorem \ref{thm:filternonstat1}. 

Under the assumptions of Proposition \ref{prop:boundedfilter}
it follows from \cite[Corollary~1]{Fillmore1} and \eqref{eq:kernelinclusion} that $F$ is invertible as an operator in $\cL( \cH)$ if and only if 
\eqref{eq:rangeinclusion2} is strengthened into
\begin{equation*}
\ran \cK_u =  \ran ( \cK_u + \cK_w )
\end{equation*}
and $\ker \cK_u \subseteq \ker \cK_w$.

\begin{rem}\label{rem:closedrange}
If we assume that $\ran (\cK_u + \cK_w)$ is closed then \eqref{eq:rangeinclusion2} is a consequence of 
\eqref{eq:rangeinclusion1}. 
Then the Moore--Penrose pseudo-inverse $( \cK_u + \cK_w)^+ \in \cL( \cH )$ for $\cK_u + \cK_w$ is well defined \cite{Desoer1}. 
A solution to \eqref{eq:operatoreq4} is then $F = \cK_u ( \cK_u + \cK_w )^+$. 
\end{rem}

\section*{Acknowledgements}

The author is grateful to Fabio Nicola for helpful comments.
The author is a member of Gruppo Nazionale per l’Analisi Matematica, la Probabilit\`a e le loro Applicazioni (GNAMPA) -- Istituto Nazionale di Alta Matematica
(INdAM).

\end{document}